\Crefname{figure}{Figure}{Figures}
\renewcommand{\geq}{\geqslant}
\renewcommand{\leq}{\leqslant}
\renewcommand{\succeq}{\succcurlyeq}
\renewcommand{\le}{\leq}
\renewcommand{\ge}{\geq}
\newcommand{\arxiv}[1]{\href{http://arxiv.org/abs/#1}{arXiv:#1}}
\DeclareMathOperator{\sign}{\mathsf{sign}}
\DeclareMathAlphabet{\mathbfcal}{OMS}{cmsy}{b}{n}
\DeclareMathAlphabet{\mathbbold}{U}{bbold}{m}{n}
\newcommand{\Z}{\mathbb{Z}}
\newcommand{\R}{\mathbb{R}}    
\newcommand{\Q}{\mathbb{Q}}
\newcommand{\Rinfty}{\R\cup\{-\infty\}}
\newcommand{\puiseux}{\mathbb{K}}
\newcommand{\pospuiseux}{\mathbb{K}_{>0}}
\newcommand{\nnpuiseux}{\mathbb{K}_{\geq 0}}
\newcommand{\trop}[1][]{\ifthenelse{\equal{#1}{}}{ \mathbb{T} }{ \mathbb{T}(#1) }}
\newcommand{\strop}[1][]{{\trop[#1]}_{\pm}}    
\newcommand{\postrop}[1][]{{\trop[#1]}_{+}}    
\newcommand{\negtrop}[1][]{{\trop[#1]}_{-}}    
\newcommand{\barx}{\bar{x}}
\newcommand{\tplus}{\oplus}  
\newcommand{\tsum}{\bigoplus}
\newcommand{\tdot}{\odot}
\newcommand{\tminus}{\ominus}  
\newcommand{\abs}[1]{|{#1}|}
\newcommand{\zero}{{-\infty}}
\DeclareMathOperator*{\val}{\mathsf{val}}
\DeclareMathOperator*{\lc}{\mathsf{lc}}
\DeclareMathOperator*{\sval}{\mathsf{sval}}
\newcommand{\bo}[1]{\mathbold{#1}}
\DeclareMathOperator{\tropPol}{\mathrm{trop}}
\newcommand{\card}[1]{|{#1}|}
\newcommand{\spectra}{\mathcal{S}}
\newcommand{\bspectra}{\mathbfcal{S}}
\newcommand{\bsalg}{\mathbfcal{S}}
\newcommand{\inbspectra}{\bspectra^{\mathrm{in}}}
\newcommand{\outbspectra}{\bspectra^{\mathrm{out}}}
\newcommand{\interspectra}{\mathcal{T}}
\newcommand{\hombspectra}{\bspectra^{h}}
\newcommand{\salg}{\mathscr{S}}
\newcommand{\positivesupport}{\mathscr{S}^{\geq}}
\newcommand{\strictlypositivesupport}{\mathscr{S}^{>}}
\newcommand{\slin}{\salg}
\newcommand{\definable}{\mathcal{S}}
\DeclareMathOperator*{\biglor}{\bigvee}
\newcommand{\limplies}{\rightarrow}
\newcommand{\liff}{\leftrightarrow}
\newcommand{\theory}{\mathrm{Th}}
\newcommand{\lang}{\mathcal{L}}
\newcommand{\logroups}{\lang_{\mathrm{og}}}
\newcommand{\logb}{\lang_{\mathrm{ogb}}}
\newcommand{\lorings}{\lang_{\mathrm{or}}}
\newcommand{\lvrcf}{\lang_{\mathrm{rcvf}}}
\newcommand{\angular}{\mathsf{ac}}
\newcommand{\xsec}{\mathsf{csec}}
\newcommand{\lstructure}{\mathcal{M}}
\newcommand{\universe}{M}
\newtheorem{theorem}{Theorem}[section]
\newtheorem{proposition}[theorem]{Proposition}
\newtheorem{corollary}[theorem]{Corollary}
\newtheorem{lemma}[theorem]{Lemma}
\theoremstyle{definition}
\newtheorem{definition}[theorem]{Definition}
\newtheorem{assumption}[theorem]{Assumption}
\theoremstyle{remark}
\newtheorem{remark}[theorem]{Remark}
\newtheorem{example}[theorem]{Example}
\tikzset{grid/.style={gray!30,very thin}}
\tikzset{axis/.style={gray!50,->,>=stealth'}}
\tikzset{convex/.style={draw=none,fill=lightgray,fill opacity=0.7}}
\tikzset{convexborder/.style={very thick}}
\tikzset{point/.style={blue!50}}
\tikzset{hs/.style={fill opacity=0.3,fill=orange,draw=none}}
\tikzset{hsborder/.style={orange,ultra thick,dashdotted}}
\newcommand{\overbar}[1]{\mkern 1.5mu\overline{\mkern-1.5mu#1\mkern-1.5mu}\mkern 1.5mu}
\newcommand{\polyh}{\mathcal{W}}
\newcommand{\polyhII}{\mathcal{V}}
\newcommand{\face}{\polyhII}
\newcommand{\hplane}{\mathcal{H}}
\newcommand{\ri}{\mathrm{ri}}
\newcommand{\conv}{\mathrm{conv}}
\newcommand{\domin}{\mathrm{Argmax}}
\newcommand{\psupport}{\Lambda}
\newcommand{\subpsupport}{L}
\newcommand{\complex}{\mathcal{C}}
\newcommand{\poscomplex}{\complex^{\geq}}
\newcommand{\loew}{\succeq}
\newcommand{\inter}{\mathrm{int}}
\DeclareMathOperator{\vecspan}{span}
\DeclareMathOperator{\cl}{cl}
\newcommand{\vfield}{\mathscr{K}}
\newcommand{\vgroup}{\Gamma}
\newcommand{\resfield}{\mathscr{k}}
\newcommand{\vring}{\mathscr{O}}
\newcommand{\videal}{\mathscr{M}}
\newcommand{\res}{\mathsf{res}}
\newcommand{\rcfield}{\vfield}
\newcommand{\oag}{\Gamma}
\newcommand{\doag}{\Gamma}
\newcommand{\thVrcf}{\theory_{\mathrm{rcvf}}}
\newcommand{\thRcf}{\theory_{\mathrm{rcf}}}
\newcommand{\thDoagb}{\theory_{\mathrm{doagb}}}
\newcommand{\dgraph}{\vec{\mathcal{G}}}
\newcommand{\vertices}{V}
\newcommand{\vertex}{v}
\newcommand{\edges}{E}
\newcommand{\edge}{e}
\newcommand{\tails}{T}
\newcommand{\head}{h}
\newcommand{\inedge}{\mathrm{In}}
\newcommand{\outedge}{\mathrm{Out}}
\newcommand{\circulation}{\gamma}
\newcommand{\stbase}{\epsilon}
\newcommand{\diag}{\mathrm{tdiag}}
\newcommand{\minspectra}{\spectra^{\operatorname{form}}}
\newcommand{\torth}{\trop_{\pm,\mu}}
\begin{document}

\title{Tropical Spectrahedra}
\date{February 18, 2020}

\thanks{The three authors were partially supported by the ANR projects CAFEIN (ANR-12-INSE-0007) and MALTHY (ANR-13-INSE-0003), by the PGMO program of EDF and Fondation Math\'ematique Jacques Hadamard, and by
the ``Investissement d'avenir'', r{\'e}f{\'e}rence ANR-11-LABX-0056-LMH,
LabEx LMH. M.~Skomra was supported by a grant from R{\'e}gion Ile-de-France.}

\author{Xavier Allamigeon}
\author{St{\'e}phane Gaubert}
\address{INRIA and CMAP, \'Ecole Polytechnique, IP Paris, CNRS, 91128 Palaiseau Cedex, France}
\email{firstname.lastname@inria.fr}

\author{Mateusz Skomra}
\address{Univ Lyon, EnsL, UCBL, CNRS,  LIP, F-69342, LYON Cedex 07, France}\email{mateusz.skomra@ens-lyon.fr} 

\keywords{Spectrahedra, tropical geometry, valued fields, nonarchimedean fields}
\catcode`\@=11
\@namedef{subjclassname@2010}{%
  \textup{2010} Mathematics Subject Classification}
\catcode`\@=12
\subjclass[2010]{14T05, 90C22, 14P10, 12J25}

\begin{abstract}
We introduce tropical spectrahedra,
defined as the images by the nonarchimedean valuation
of spectrahedra over the field of real Puiseux series.
We provide an explicit polyhedral characterization of generic tropical spectrahedra,
involving principal tropical minors of size at most~$2$. One of the key ingredients is Denef--Pas quantifier elimination result  over valued fields. We obtain from this that the nonarchimedean valuation maps semialgebraic sets to semilinear sets that are closed. 
We also prove that, under a regularity assumption, the image by the valuation of a  basic semialgebraic set is obtained by tropicalizing the inequalities which define it.
\end{abstract}

\maketitle

\section{Introduction}

Spectrahedra are one of the main generalizations of polyhedra. They are
real convex semialgebraic sets, defined by a single matrix inequality 
of the form 
\[ Q^{(0)}+
x_{1}Q^{(1)}+\dots + x_{n}Q^{(n)} \loew 0 \enspace ,
\]
where $Q^{(0)},\dots,Q^{(n)}$ are
real symmetric matrices, and $\loew$ denotes the Loewner order.
(Recall that, for real symmetric matrices $A,B$, the {\em Loewner order}
is such that $A\loew B$ if all the eigenvalues of $A-B$ are nonnegative.) Spectrahedra arise in a number of applications from engineering
sciences and combinatorial optimization~\cite{siam_parrilo,matousekbook}. 
Several theoretical questions
concerning spectrahedra have been raised, such as the 
Helton--Nie conjecture~\cite{helton_nie},
recently disproved by Scheiderer~\cite{scheiderer_helton_nie}.
Other, like the generalized Lax conjecture~\cite{vinnikov} or the complexity of checking the emptiness~\cite{ramana_exact_duality}, are still unsettled.

Spectrahedra can be considered more generally over any real closed field.
In tropical geometry, one often works with the usual field of real Puiseux series (with rational exponents), 
or
with a larger field
of generalized Puiseux series with real exponents,
as we do here. 

\subsection*{Main results} In this paper, we introduce tropical spectrahedra. These are defined as the images
by the nonarchimedean valuation of spectrahedra over the field of real generalized Puiseux series. Our main result (\cref{corollary:genericity}) provides an explicit polyhedral characterization of tropical spectrahedra,
when the valuation of the defining matrices satisfy a genericity condition.
We suppose first that
the defining matrices have a Metzler type sign pattern.
Metzler matrices are fundamental in mathematical modelling,
since they yield dynamical systems whose flow is order preserving.
However, we focus on these matrices because
the tropicalization of the (generic) spectrahedra
that they define is remarkably transparent (\cref{corollary:metzler_regularity}). Then, we show how to recover the tropicalization
of a spectrahedron whose defining matrices have a general
sign pattern by ``gluing'' tropicalizations of spectrahedra
satisfying the Metzler condition.
Our results are synthesized by \cref{concide_minors},
which shows, still {\em under a genericity condition}, that
{\em the tropicalization of a spectrahedron is obtained
by requiring only the positivity of principal tropical minors
of order at most $2$}. 
The genericity condition is expressed explicitly as a flow condition
in a directed hypergraph (\cref{lemma:metzler_genericity} and \cref{theorem:genericity}).

To this end, we study, more generally,
tropical semialgebraic sets, defined as the images
by the nonarchimedean valuation of semialgebraic sets
over the field of real generalized Puiseux series. We exploit 
quantifier elimination methods in valued fields by Denef~\cite{denef_p-adic_semialgebraic} and Pas~\cite{pas_cell_decomposition},
which imply that such sets are semilinear. Moreover, we show that
tropical semialgebraic sets are always closed. It follows that,
under a regularity assumption, 
the image by the valuation of a basic semialgebraic set is obtained
by ``tropicalizing'' each polynomial inequality arising in the definition
of this set. This shows in particular that this image is a polyhedral complex
with an explicit description in terms of piecewise linear inequalities
(\cref{corollary:purity_commutation}).

\subsection*{Related work}
A general question, in tropical geometry, consists in providing
combinatorial characterizations of nonarchimedean amoebas, i.e.,
images by the nonarchimedean valuation of algebraic sets
over nonarchimedean fields. 
Kapranov's theorem on amoebas of hypersurfaces, 
or Viro's patchworking method for real algebraic curves~\cite{viro}
and its extensions~\cite{sturmfels_complete_intersection,bihan_complete_intersection}, address this question in different settings.
In parallel, general results have been developed in model theory of valued
fields, in particular by Weispfenning~\cite{weispfenning_qe_in_valued_fields}, Denef~\cite{denef_rationality_of_poincare,denef_p-adic_semialgebraic}, and Pas~\cite{pas_cell_decomposition,pas_mixed_characteristic}.
The fact that nonarchimedean amoebas have a polyhedral structure follows
from these works. 

Excepting tropical polyhedra, there are few works dealing with tropical
semialgebraic sets. The most closely related works are those
of Yu~\cite{yu_semidefinite_cone} and Alessandrini~\cite{alessandrini}.

Yu characterized the image by the nonarchimedean valuation
of the cone of positive semidefinite matrices over real Puiseux series,
showing that it is determined by $2\times 2$ principal tropical minors. 
We show that $2\times 2$, together with $1\times 1$, tropical minors
still determine generic tropical spectrahedra.

Alessandrini studied the log-limits of real semialgebraic
sets. His approach, based on o-minimal models, shows in particular
that the image by the nonarchimedean valuation
of a semialgebraic set over the field of absolutely convergent real generalized
Puiseux series is a polyhedral complex. We avoid the recourse to o-minimal techniques by using Denef--Pas quantifier elimination instead. A by-product is that the property holds for any real closed valued field.

\subsection*{Applications and consequences of the main results}
A general issue in computational optimization is to develop combinatorial algorithms for semidefinite programming. The present work, providing an explicit characterization
of tropical spectrahedra, leads to combinatorial algorithms
to solve a class of generic semidefinite feasibility problems over nonarchimedean fields.
This is developed in the companion work~\cite{issac2016jsc}, where it is shown that feasibility
problems for generic
tropical spectrahedra 
are equivalent to solving mean payoff stochastic games with perfect information, also known as ``turn-based'' mean payoff stochastic games.
This allows one to apply game algorithms to solve nonarchimedean semidefinite
feasibility problems.
The reference~\cite{issac2016jsc} focuses on algorithmic aspects, relying on the present work for
structural results.
Conversely, tropical spectrahedra offer new methods
to study turn-based mean payoff stochastic games. Recall
that the complexity of these games is one of the fundamental unsettled problems
in algorithmic game theory: these games are in NP $\cap$ coNP, but they are not known to be in P (see~\cite{andersson_miltersen} and the references therein). In the recent joint work of the authors with Katz~\cite{1802.07712}, we associate ``primal'' and ``dual'' tropical spectrahedra to a turn-based mean payoff stochastic game. We show that metric characteristics of these spectrahedra define a condition number, allowing one
to parametrize the complexity of these games. 

The present work has also more theoretical consequences. In~\cite{mega2017}, 
we use the characterization of tropical spectrahedra
to show that an analogue of the Helton--Nie conjecture over nonarchimedean fields is true ``up to taking the valuations'', i.e.,
the images by the valuation of convex semialgebraic sets over a nonarchimedean
field coincide with the images by the valuation of projected spectrahedra, over
the same field.

\subsection*{Organization of the paper}

In \cref{sec:prelim}, we recall basic notions and results from tropical
geometry and from the theory of valued fields. In \cref{sec:model}, we apply
the quantifier elimination results of Denef and Pas to show that
tropical semialgebraic sets have a polyhedral structure. This allows
us to show, in \cref{sec:images}, that tropical semialgebraic sets are finite
unions of closed polyhedra. In \cref{section:spectrahedra},
we introduce tropical spectrahedra.
We first provide an explicit combinatorial description in the
simpler situation in which the input matrices have a Metzler
sign pattern (\cref{subsubsec:tropical_metzler_spectrahedra}),
and subsequently remove this assumption (\cref{subsection:nonmetzler}).
These results hold under a condition that is shown 
to be satisfied generically in \cref{section:generic}.

\section{Preliminaries}\label[section]{sec:prelim}

\subsection{Puiseux series}\label[section]{sec:puiseux}

The main example of nonarchimedean structure which we use in this paper is the \emph{field $\puiseux$ of (formal generalized real) Puiseux series}. This field is defined as follows. It consists of formal series of the form
\begin{equation}
\bo x = \sum_{i = 1}^{\infty} c_{\lambda_{i}}t^{\lambda_{i}} \, , \label[equation]{eq:series}
\end{equation}
where $t$ is a formal parameter, $(\lambda_{i})_{i \ge 1}$ is a strictly decreasing sequence of real numbers that is either finite or unbounded, and $c_{\lambda_{i}} \in \R \setminus \{ 0\}$ for all $\lambda_{i}$. There is also a special, empty series, which is denoted by $0$. 
We denote by $\lc(\bo x)$ the coefficient $c_{\lambda_{1}}$ of the leading term in the series $\bo x$, with the convention that $\lc(0) = 0$. The addition and multiplication in $\puiseux$ are defined in a natural way. Moreover, $\puiseux$ can be endowed with a linear order $\ge$, which is defined as $\bo x \ge \bo y$ if $\lc(\bo x - \bo y) \ge 0$.  We denote $\nnpuiseux$ the set of nonnegative series $\bo x$, i.e.,~satisfying $\bo x \geq 0$. The \emph{valuation} of an element $\bm x \in \puiseux$ as in~\cref{eq:series} is defined as the greatest exponent $\lambda_{1}$ occurring in the series. It is known that $\puiseux$ is a real closed field (see~\cite{markwig} for instance). We use the specific field $\puiseux$ to keep the exposition concrete. 
Our main results, however, work for any real closed field sent surjectively to $\R$ by a nonarchimedean valuation. 
Other convenient choices of such fields include the field of Hahn series (series with well-ordered support) with real coefficients and real exponents~\cite{ribenboim}, or its subfield consisting of those series that are absolutely convergent for $t$ large enough, or the subfield
of $\puiseux$ consisting of series subject to the same convergence requirement (see~\cite{van_dries_power_series} for more information on the two latter fields).

\subsection{Tropical algebra}\label[section]{section:tropical_algebra}

In this section, we briefly introduce the basic concepts of tropical algebra and its connection with the nonarchimedean field of Puiseux series.

We denote by $\val \colon \puiseux \to \Rinfty$ the function which maps a Puiseux series $\bo x \in \puiseux$ to its valuation. We use the convention $\val(0) = -\infty$.  It is immediate to see that the map $\val$ satisfies the following properties
\begin{align}
\val(\bo x + \bo y) &\leq \max( \val (\bo x ), \val (\bo y))\label[equation]{e-val}\\
\val(\bo x  \bo y) &= \val (\bo x ) + \val (\bo y)\label[equation]{e-val2}
\end{align}
meaning that $\val$ is a {\em nonarchimedean valuation}. Moreover, the equality
holds in~\cref{e-val} if the leading terms of $\bo x$ and $\bo y$ do not cancel, which is the case if $\val (\bo x ) \neq \val (\bo y)$ or if
$\bo x, \bo y\geq 0$.

Loosely speaking, the \emph{tropical semifield} $\trop$ can be thought of as the
image of $\puiseux$ by the nonarchimedean valuation.
We use the ``max-plus'' convention,
so the base set of $\trop$
is defined to be $\Rinfty$. It is endowed with the addition
$x \tplus y \coloneqq \max(x,y)$ and the multiplication $x \tdot y \coloneqq x + y$. 
The term ``semifield'' refers to the fact that the addition does not have
an opposite law.
We use the notation $\tsum_{i = 1}^{n}a_{i} = a_{1} \tplus \cdots \tplus a_{n}$ and $a^{\tdot n} = a \tdot \cdots \tdot a$ ($n$ times). We also endow $\trop$ with the standard order $\ge$. 
The map $\val$ yields an order-preserving
morphism of semifields from $\nnpuiseux$ to $\trop$. This follows
from~\cref{e-val2} and from the equality case in~\cref{e-val}.
We refer the reader to~\cite{butkovic,maclagan_sturmfels} for more information on the tropical semifield.

We note that in the valued fields literature, the valuation map is usually defined using the min-plus notation. More precisely, in the definition of Puiseux series,
one usually requires the sequence of exponents $(\lambda_{i})_{i \ge 1}$
to be strictly increasing, and then the valuation of a series
is defined as the smallest exponent occurring in this series.
This leads to a map $\val \colon \puiseux \to \R \cup \{+\infty\}$ that satisfies $\val(x_{1} + x_{2}) \ge \min(\val(x_{1}),\val(x_{2}))$. This definition is equivalent to the one above---it is enough to replace $\val(x)$ by $-\val(x)$ in order to alternate between the two definitions.
In this paper, we prefer to use the max-plus notation, since
it yields an order preserving map from $\nnpuiseux$ to $\trop$,
allowing more direct arguments. Our convention also corresponds
to an analytic interpretation of $t$ as a ``large positive parameter'',
and it is consistent with earlier work of us on related problems,
see~\cite{issac2016jsc,1708.01544}.

It is convenient to keep track not only of the valuation of a series but also of its sign. To this end, we define the \emph{sign} of a series $\bo x \in \puiseux$ as $+1$ if $\bo x >0$, $-1$ if $\bo x<0$, and $0$ otherwise. We denote it by $\sign (\bo x)$. Besides, we introduce the \emph{signed valuation}, denoted by $\sval$,
which associates the couple
$(\sign (\bo x), \val (\bo x))$ with a series $\bo x\in\puiseux$. We denote by $\strop$ the
image of $\puiseux$ by $\sval$. We refer to it as the set
of \emph{signed tropical numbers}. For brevity,
we denote an element of the form $(\epsilon,a)$ by $a$
if $\epsilon =1$, $\tminus a$ if $\epsilon =-1$,
and $-\infty$ if $\epsilon =0$. Here, $\tminus$ is a formal
symbol. We call the elements of the first and second kind
the \emph{positive} and \emph{negative} tropical numbers, respectively.
We denote by $\postrop$ and $\negtrop$ the corresponding sets.
In this way, $(-2)$ is tropically positive, but $\tminus (-2)$ is tropically negative. Also, $\trop$ is embedded in $\strop$, i.e., $\trop = \postrop \cup \{ \zero\}$. We shall extend the valuation maps $\val$ and $\sval$ to vectors and matrices in a coordinate-wise manner.

In $\strop$, we define a \emph{modulus} function, $\abs{\cdot} \colon \strop \to \trop$, as $\abs{\zero} = \zero$ and $\abs{a} = \abs{\tminus a} = a$ for all $a \in \postrop$. We point out that $\tdot$ straightforwardly extends
to $\strop$ using the standard rules for the sign, for instance $2\tdot (\tminus 3)= \tminus 5$. 
In contrast, we only partially define the tropical addition $\tplus$ to elements of $\strop$ of identical sign, e.g., $2 \tplus 3 = 3$ and $(\tminus 2) \tplus (\tminus 3) = \tminus 3$. Such a partially defined operation
will be enough for the present purposes.
We note, however, that we may extend the addition operation so that
it is everywhere defined, by embedding $\strop$ in the {\em symmetrized tropical semiring}~\cite{guterman}, or by allowing the addition to be multivalued
in the setting of hyperfields~\cite{virohyperfields,connesconsani,baker}. 
 
Moreover, we use the notion of tropical polynomials. A \emph{(signed) tropical polynomial} over the variables $X_1, \dots, X_n$ is a formal expression of the form
\begin{align}
P(X) = \tsum_{\alpha \in \psupport} a_{\alpha} \tdot X_{1}^{\tdot \alpha_{1}} \tdot  \cdots \tdot X_{n}^{\tdot \alpha_{n}} \, ,\label[equation]{e-def-P}
\end{align}
where $\psupport$ is a finite subset of $\{0, 1, 2, \dots \}^{n}$, and $a_{\alpha} \in \strop \setminus \{\zero\}$ for all $\alpha \in \psupport$. 
We set $\psupport^{+}\coloneqq \{\alpha \in \psupport\colon a_\alpha \in \postrop\}$
and $\psupport^{-}\coloneqq \{\alpha \in \psupport\colon a_\alpha \in \negtrop\}$. We shall occasionally write $\psupport(P)$ or $\psupport^\pm(P)$
to emphasize the dependence in $P$.
We say that the tropical polynomial $P$ \emph{vanishes} on the point $x\in \strop^n$ if two of the terms $a_{\alpha} \tdot x_{1}^{\tdot \alpha_{1}} \tdot  \dots \tdot x_{n}^{\tdot \alpha_{n}}$ which have the greatest modulus do not have the same sign. If $P$ does not vanish on $x$, we define $P(x)$ as the tropical sum of the terms which have the greatest modulus. As an example, if $P(X)=2\tdot X_1^{\tdot 3}\tdot X_2^{\tdot 4}\tplus (\tminus 0 \tdot X_2)$, 
then $P(1,\tminus 5)=25$, $P(1,-5)= \tminus (-5)$, whereas 
$P$ vanishes on $(1,-5/3)$. 

Furthermore, if $\psupport$ is empty, then we define $P(x) = -\infty$ for all $x \in \strop^{n}$. The next definition and lemma relate the structure laws of $\strop$ and the ones of $\puiseux$.

\begin{definition}\label[definition]{def-formaltrop}
If 
\begin{equation}\label{eq:puis_pol}
\bo P(X)= \sum_{\alpha \in \psupport} \bo a_{\alpha} X_{1}^{\alpha_{1}} \dots
X_{n}^{\alpha_{n}} \in \puiseux[X_1,\dots,X_n]
\end{equation}
is a polynomial over Puiseux series, then we define its \emph{formal tropicalization}, denoted $\tropPol(\bo P)$, as the tropical polynomial
\[ 
\tropPol(\bo P) \coloneqq  \tsum_{\alpha \in \psupport} \sval (\bo a_\alpha) \tdot X_{1}^{\tdot \alpha_{1}} \tdot  \cdots \tdot X_{n}^{\tdot \alpha_{n}} \, .
\]
\end{definition}

\begin{lemma}\label[lemma]{lemma:homomorphism}
Let $\bo P(X) \in \puiseux[X_1,\dots,X_n]$ and let $P \coloneqq \tropPol(\bo P)$. Then, for all $\bo x\in \puiseux^n$, $\sval (\bo P(\bo x)) = P(\sval (\bo x))$ provided that $P$ does not vanish on $\sval (\bo x)$.
\end{lemma}
\begin{proof}
Let $\bo P$ be as in~\cref{eq:puis_pol}. If $\bo P$ is a zero polynomial, then the claim is trivial. Otherwise, let $x \coloneqq \sval(\bo x)$ and $a_{\alpha} \coloneqq \sval(\bo a_{\alpha})$ for all $\alpha \in \psupport$. Note that for every $\alpha \in \psupport$ we have the equalities
\begin{equation*}\label{eq:trop_by_terms}
\begin{aligned}
\sval(\bo a_{\alpha} \bo x_{1}^{\alpha_{1}} \dots \bo x_{n}^{\alpha_{n}}) &= a_\alpha \tdot x_{1}^{\tdot \alpha_{1}} \tdot  \dots \tdot x_{n}^{\tdot \alpha_{n}} \, , \\
\val(\bo a_{\alpha} \bo x_{1}^{\alpha_{1}} \dots \bo x_{n}^{\alpha_{n}}) &= \abs{a_\alpha \tdot x_{1}^{\tdot \alpha_{1}} \tdot  \dots \tdot x_{n}^{\tdot \alpha_{n}}} \, .
\end{aligned}
\end{equation*} 
Let $\overbar{\psupport} \subset \psupport$ denote terms that maximize the modulus of $a_\alpha \tdot x_{1}^{\tdot \alpha_{1}} \tdot  \dots \tdot x_{n}^{\tdot \alpha_{n}}$. Write $\bo P( \bo x)$ as
\[
\bo P( \bo x) = \Bigl(\sum_{\alpha \in \overbar{\psupport}} \bo a_{\alpha} \bo x_{1}^{\alpha_{1}} \dots \bo x_{n}^{\alpha_{n}}\Bigr) + \Bigl(\sum_{\alpha \notin \overbar{\psupport}} \bo a_{\alpha} \bo x_{1}^{\alpha_{1}} \dots \bo x_{n}^{\alpha_{n}}\Bigr) \, .
\]
Since $P$ does not vanish on $\sval (\bo x)$, the terms $\bo a_{\alpha} \bo x_{1}^{\alpha_{1}} \dots \bo x_{n}^{\alpha_{n}}$ such that $\alpha \in \overbar{\psupport}$ have the same sign. Furthermore, note that $\sval(\bo x + \bo y) = \sval(\bo x) \tplus \sval(\bo y)$ provided that $\bo x$ and $\bo y$ have the same sign. In particular, we have the equality
\begin{align*}
\sval\Bigl(\sum_{\alpha \in \overbar{\psupport}} \bo a_{\alpha} \bo x_{1}^{\alpha_{1}} \dots \bo x_{n}^{\alpha_{n}}\Bigr) &= \tsum_{\alpha \in \overbar{\psupport}} a_{\alpha} \tdot x_{1}^{\tdot \alpha_{1}} \tdot  \cdots \tdot x_{n}^{\tdot \alpha_{n}} = P(x) \, .
\end{align*}
Moreover, observe that $\sval(\bo x + \bo y) = \sval(\bo x)$ whenever $\val(\bo y) < \val(\bo x)$. Hence, we get
\[
\sval(\bo P(\bo x)) = \sval\Bigl(\sum_{\alpha \in \overbar{\psupport}} \bo a_{\alpha} \bo x_{1}^{\alpha_{1}} \dots \bo x_{n}^{\alpha_{n}}\Bigr) = P(x) \, . \qedhere
\]
\end{proof}

Given a polynomial $\bm P$ as in~\cref{eq:puis_pol},
we denote by $\bm P^+$ the polynomial obtained by summing
the terms $\bm a_{\alpha} X_{1}^{\alpha_{1}} \dots
X_{n}^{\alpha_{n}}$ such that $\bm a_\alpha > 0$. Similarly, $\bm P^-$ refers to the
sum of
the terms $-\bm a_{\alpha} X_{1}^{\alpha_{1}} \dots X_{n}^{\alpha_{n}}$ verifying $\bm a_\alpha < 0$. In this way, $\bm P = \bm P^+ - \bm P^-$. We also use the analogues of these polynomials in the tropical setting. If $P$ is the tropical polynomial given in~\cref{e-def-P}, we define $P^+$ (resp.\ $P^-$) as the tropical polynomial obtained by summing 
the terms $\abs{a_\alpha} \tdot X_{1}^{\tdot \alpha_{1}} \tdot  \dots \tdot X_{n}^{\tdot \alpha_{n}}$ where $a_\alpha \in \postrop$ (resp.\ $\negtrop$). Observe that the quantities $P^+(x)$ and $P^-(x)$ are well defined for all $x \in \trop^n$, since the tropical polynomials $P^+$ and $P^-$ only involve tropically positive coefficients.
In particular, if $\bm P$ is as in~\cref{eq:puis_pol}, and if $P=
\tropPol(\bo P)$, observe that $P^+=\tropPol(\bm P^+)$ and
$P^-=\tropPol(\bm P^-)$.

Throughout the paper, we denote the set $\{1, \dots, k\}$ by~$[k]$.

\subsection{Tropical polynomial inequalities and polyhedral complexes}

Given a tropical polynomial $P$ as in~\cref{e-def-P}, we say that $P$ is \emph{nonzero} if the set $\psupport$ is nonempty. For every such tropical polynomial and every point $x \in \R^{n}$ we define the \emph{set of maximizing multi-indices at $x$} as
\[
\domin(P,x) \coloneqq \bigl\{\alpha \in \psupport \colon \forall \beta \in \psupport, \, \abs{a_{\alpha}} + \langle \alpha, x \rangle \geq \abs{a_{\beta}} + \langle \beta, x \rangle\bigr\} \, ,
\]
where $\langle \cdot, \cdot \rangle$ refers to the usual scalar product. If $P$ is a nonzero tropical polynomial and we fix a multi-index $\alpha \in \psupport$, then the set
\[
\polyh = \cl(\{x \in \R^{n} \colon \domin(P, x) = \alpha \}) \, 
\]
is a polyhedron that is either empty or full-dimensional. (Here and in the sequel, $\cl(\cdot)$ refers to the closure of a subset of $\R^n$ with respect to the Euclidean topology.) Moreover, the family of these polyhedra, together with their faces, forms a polyhedral complex whose support is equal to $\R^{n}$. More precisely, a polyhedron $\polyhII$ is a (possibly empty) cell of this complex if and only if there exists a subset $\subpsupport \subset \psupport$ such that
\begin{equation}
\polyhII = \cl(\{x \in \R^{n} \colon \domin(P,x) = \subpsupport \}) \, . \label[equation]{eq:cell}
\end{equation}
We denote this complex by $\complex(P)$. The union of all $(n-1)$-dimensional polyhedra belonging to $\complex(P)$ is called a \emph{tropical hypersurface}. In other words, a tropical hypersurface is the set of all points $x \in \R^{n}$ such that $\domin(P,x)$ has at least two elements. 

For the purpose of this work, given a nonzero tropical polynomial $P$,
it is also convenient to consider the set 
\[
\positivesupport(P)\coloneqq
\{x \in \R^{n} \colon P^{+}(x) \ge P^{-}(x) \} \enspace.
\]
To describe this set, we consider the family $\poscomplex(P)$ of positive cells of $\complex(P)$. We say that a cell $\polyhII \in \complex(P)$ as in \cref{eq:cell} is \emph{positive} if there exists at least one $\alpha \in \subpsupport$ such that $a_{\alpha} \in \postrop$ or if $\polyhII$ is empty. The family $\poscomplex(P)$ is a polyhedral complex whose support is equal to 
$\positivesupport(P)$.

Given a system of nonzero tropical polynomials $P_{1}, \dots, P_{m}$, one can regard the refinements of complexes defined by $P_{1}, \dots, P_{m}$. More precisely, we define $\complex(P_{1}, \dots, P_{m})$ and $\poscomplex(P_{1}, \dots, P_{m})$ as
\begin{equation*}
\begin{aligned}
\complex(P_{1}, \dots, P_{m}) &= \bigl\{ \cap_{i = 1}^{m} \polyh_{i} \colon \forall i, \ \polyh_{i} \in \complex(P_{i}) \bigr\} \, , \\
\poscomplex(P_{1}, \dots, P_{m}) &= \bigl\{ \cap_{i = 1}^{m} \polyh_{i} \colon \forall i, \ \polyh_{i} \in \poscomplex(P_{i}) \bigr\} \, .
\end{aligned}
\end{equation*}
The families $\complex(P_{1}, \dots, P_{m})$ and $\poscomplex(P_{1}, \dots, P_{m})$ are polyhedral complexes. The support of the former is equal to $\R^{n}$ while the support of the latter coincides with
\[
\positivesupport(P_1,\dots,P_m)\coloneqq
\{x \in \R^{n} \colon \forall i, P_{i}^{+}(x) \ge P_{i}^{-}(x) \}
\enspace .
\]
Finally, in this work we consider polyhedral complexes with regular supports. Recall that a closed set $S \subset \R^{n}$ is called \emph{regular} if
$S = \cl(\inter(S))$ (here and in the sequel, $\inter(\cdot)$ denotes the interior of a subset of $\R^n$). If $\complex$ is a polyhedral complex, then its support is regular if and only if $\complex$ is pure and full-dimensional. 
A basic property of such complexes appears in the next lemma.
\begin{lemma}\label[lemma]{lemma:purity_closes_strict}
Suppose that 
the polyhedral complex $\poscomplex(P_{1}, \dots, P_{m})$ 
has a regular support.
Then 
this support,
$\positivesupport(P_1,\dots,P_m)$,
coincides with the closure of the set
\[
\strictlypositivesupport(P_1,\dots,P_m)\coloneqq\{x \in \R^{n} \colon \forall i, P_{i}^{+}(x) > P_{i}^{-}(x) \} \, .
\]
\end{lemma}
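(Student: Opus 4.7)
One inclusion is automatic. Since $\strictlypositivesupport(P_1,\dots,P_m) \subseteq \positivesupport(P_1,\dots,P_m)$, and the right-hand side is closed (being the support of a polyhedral complex), we have $\cl(\strictlypositivesupport(P_1,\dots,P_m)) \subseteq \positivesupport(P_1,\dots,P_m)$. So the whole content is in the reverse inclusion.

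For the reverse inclusion, fix $x \in \positivesupport(P_1,\dots,P_m)$. The regularity hypothesis means that $\poscomplex(P_1,\dots,P_m)$ is pure and full-dimensional, hence $x$ lies in some $n$-dimensional cell $\polyhII \in \poscomplex(P_1,\dots,P_m)$. By construction $\polyhII = \bigcap_{i=1}^m \polyh_i$ where each $\polyh_i \in \poscomplex(P_i)$ has the form $\polyh_i = \cl(\{y \in \R^n \colon \domin(P_i, y) = L_i\})$ for some $L_i \subseteq \psupport(P_i)$. Since $\polyhII$ has dimension $n$, each $\polyh_i$ must itself be $n$-dimensional: a nontrivial tie among two or more affine forms would otherwise confine $\polyh_i$ to a proper affine subspace. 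Hence each $L_i$ reduces to a single multi-index $\{\alpha_i\}$, and positivity of the cell yields $a_{\alpha_i} \in \postrop$.

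Now pick any $y$ in the Euclidean interior of $\polyhII$. Since $y$ belongs to the interior of each $\polyh_i$, we have $\domin(P_i, y) = \{\alpha_i\}$, i.e.\ $|a_{\alpha_i}| + \langle \alpha_i, y \rangle > |a_\beta| + \langle \beta, y \rangle$ for every $\beta \in \psupport(P_i) \setminus \{\alpha_i\}$. Specializing this to the indices $\beta \in \psupport^-(P_i)$ gives $P_i^+(y) \geq |a_{\alpha_i}| + \langle \alpha_i, y \rangle > P_i^-(y)$ for each $i$, so $y \in \strictlypositivesupport(P_1,\dots,P_m)$. Because the full-dimensional convex polyhedron $\polyhII$ equals the closure of its interior, $x \in \polyhII$ is a limit of such points $y$, placing $x$ in $\cl(\strictlypositivesupport(P_1,\dots,P_m))$.

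The main technical point where the regularity hypothesis enters is the passage from full-dimensionality of $\polyhII$ to the singleton property $L_i = \{\alpha_i\}$; everything else is bookkeeping. The degenerate case $\psupport^-(P_i) = \emptyset$, in which $P_i^-$ is the empty tropical sum $\zero$, poses no difficulty either: the strict inequality $P_i^+(y) > \zero$ follows immediately since $\alpha_i \in \psupport^+(P_i)$ makes $P_i^+(y)$ a finite real number.
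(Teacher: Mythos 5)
Your proof is correct and follows essentially the same approach as the paper's: one inclusion from closedness, the other by locating a generic point of $\positivesupport$ in a full-dimensional cell $\polyhII=\bigcap_i \polyh_i$, deducing that each $\subpsupport_i$ is a singleton in $\psupport^+(P_i)$ because the cell has dimension $n$, observing that the interior of $\polyhII$ lies in $\strictlypositivesupport$, and concluding since a full-dimensional polyhedron is the closure of its interior. The paper phrases the last step via a half-open segment $[\bar y, y[$ rather than invoking $\polyhII=\cl(\inter(\polyhII))$ directly, but these are the same fact; your brief justification of why $\subpsupport_i$ must be a singleton and your remark on the case $\psupport^-(P_i)=\emptyset$ are minor, useful elaborations of points the paper leaves implicit.
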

\begin{proof}
The set $\positivesupport(P_1,\dots,P_m)$ is closed 
since the tropical polynomial functions $P_i^{\pm}$ are continuous,
and obviously, $\strictlypositivesupport(P_1,\dots,P_m)\subset
\positivesupport(P_1,\dots,P_m)$. Therefore,
\[ 
\cl(\strictlypositivesupport(P_1,\dots,P_m))\subset \positivesupport(P_1,\dots,P_m) \, .
\]
Consider now $y\in \positivesupport(P_1,\dots,P_m)$. Since
this set is regular, $y$ belongs to a full-dimensional cell $\polyh$
of $\poscomplex(P_1,\dots,P_m)$. We have $\polyh= \cap_{1\leq i\leq m}\polyh_i$,
where $\polyh_i$ is a full-dimensional cell of $\poscomplex(P_i)$.
This implies that $\polyh_i= \cl(\{x \in \R^{n} \colon \domin(P_i,x) = \subpsupport_i \})$ where $\subpsupport_i$ is a one element
subset of $\psupport^+(P_i)$. We conclude that $P^+_i(x)>P^-_i(x)$
holds for all $x\in \inter(\polyh_i)$, and so, $\inter(\polyh)\subset
\strictlypositivesupport(P_1,\dots,P_m)$.
Taking any $\bar{y}$ in the interior of $\polyh$, we see that the half-open segment $[\bar{y},y[$ is contained in the interior
of $\polyh$, and thus in $\strictlypositivesupport(P_1,\dots,P_m)$.
It follows that $y\in \cl(\strictlypositivesupport(P_1,\dots,P_m))$.
\end{proof}

\subsection{Valued fields}\label[section]{sec:valued_fields}

In this section, we recall some basic information about valued fields. We refer to~\cite[Chapter~2]{engler_prestel_valued_fields} for a complete account. If $\vfield$ is a field and $\vgroup$ is an ordered abelian group, then a surjective function $\val \colon \vfield \to \vgroup \cup \{\zero \}$ is called a \emph{valuation} if it fulfills the following three conditions:
\begin{equation}
\begin{aligned}
 \val(x) = -\infty &\iff x = 0 \, , \\
\forall x_{1}, x_{2} \in \vfield, \ \val(x_{1}x_{2}) &= \val(x_{1}) + \val(x_{2}) \, , \\
\forall x_{1}, x_{2} \in \vfield, \ \val(x_{1} + x_{2}) &\le \max(\val(x_{1}),\val(x_{2})) \, . \label[equation]{eq:valuation}
\end{aligned}
\end{equation}
A tuple $(\vfield, \vgroup, \val)$ is called a \emph{valued field}. Under these conditions, $\vring \coloneqq \{x \in \vfield \colon \val(x) \le 0 \}$ is a subring of $\vfield$ and $\videal \coloneqq \{ x \in \vfield \colon \val(x) < 0\}$ is its maximal ideal. The quotient field $\resfield \coloneqq \vring/\videal$ is called the \emph{residue field}. We denote by $\res$ the canonical projection from $\vring$ to $\resfield$. The valuation is called \emph{trivial} if $\vgroup = \{0\}$. Otherwise, it is called \emph{nontrivial}.
Recall that, as explained in~\cref{section:tropical_algebra}, we use
the ``max-plus'' sign-convention to define the valuation, i.e.,
the map $-\val$
is a valuation in the sense of~\cite{engler_prestel_valued_fields}.

A map $\xsec \colon \vgroup \to \vfield^{*}$ is called a \emph{cross-section} 
if it is a multiplicative morphism such that $\val \circ \xsec$ is the identity map. A map $\angular \colon \vfield \to \resfield$ is called an \emph{angular component} if it fulfills the following conditions:
\begin{itemize}
\item $\angular(0) = 0$;
\item $\angular$ is a multiplicative morphism from $\vfield^{*}$ to $\resfield^{*}$;
\item the function from $\vring$ to $\resfield$, mapping $x$ to $\angular(x)$ if $\val(x) = 0$, and to $0$ otherwise, is a surjective morphism of rings whose kernel is equal to $\videal$. 
\end{itemize}
Not every valued field admits an angular component~\cite{pas_on_angular_component}. Nevertheless, if it admits a cross-section $\xsec$, then $\angular(x) \coloneqq \res(\xsec(-\val(x))x)$ for $x \neq 0$ defines an angular component. For example, if $\puiseux$ is a field of Puiseux series defined in \cref{sec:puiseux}, then $\xsec(y) = t^{y}$ is a cross-section, and $\lc$ is an angular component. In fact, every real closed valued field has a cross-section, as shown by the following lemma.
\begin{lemma}
Suppose that $\vfield$ is real closed. Then $(\vfield, \vgroup, \val)$ admits a cross-section. (In particular, it has an angular component.)
\end{lemma}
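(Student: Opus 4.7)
The plan is to exploit real closedness to put $\Q$-vector space structures on both the value group $\vgroup$ and the multiplicative group $\vfield_{>0}$ of positive elements of $\vfield$, and then to split the surjection induced by $\val$ between them.

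First I would verify that $\vgroup$ is divisible. Given $\gamma\in\vgroup$ and a positive integer $n$, pick $x\in\vfield^{*}$ with $\val(x)=\gamma$; replacing $x$ by $-x$ if needed, one may assume $x>0$ (note that $\val(-x)=\val(x)$ since $\val(-1)=0$). Because $\vfield$ is real closed, $x$ admits a positive $n$-th root $y$, whence $n\,\val(y)=\val(y^{n})=\gamma$. Being divisible and torsion-free (as an ordered group), $\vgroup$ is thus a $\Q$-vector space. The same argument shows that $\vfield_{>0}$ is divisible; it is torsion-free because the only $x>0$ with $x^n=1$ is $x=1$. Hence $\vfield_{>0}$ is itself a $\Q$-vector space.

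Next I would restrict $\val$ to $\vfield_{>0}$. This yields a surjective group homomorphism whose kernel is the group $U_{>0}\coloneqq\{x\in\vfield_{>0}\colon \val(x)=0\}$ of positive units. One thus obtains a short exact sequence of $\Q$-vector spaces
\[
0 \;\longrightarrow\; U_{>0} \;\longrightarrow\; \vfield_{>0} \;\xrightarrow{\ \val\ }\; \vgroup \;\longrightarrow\; 0 \, .
\]
To conclude, I would invoke the fact that every short exact sequence of $\Q$-vector spaces splits (because every $\Q$-vector space is free). Any splitting $\xsec\colon \vgroup\to \vfield_{>0}\subset \vfield^{*}$ is, by construction, a multiplicative map satisfying $\val\circ\xsec=\mathrm{id}_{\vgroup}$, so it is the desired cross-section. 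The parenthetical statement about the angular component then follows from the general construction $\angular(x)\coloneqq \res(\xsec(-\val(x))\,x)$ recalled above.

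I do not anticipate any serious obstacle. The only point requiring a little care is to stay inside the positive cone throughout, so that the splitting actually lands in $\vfield^{*}$; this is ensured by the fact that, in a real closed field, $n$-th roots of positive elements exist and are positive. If one preferred a more elementary formulation, the splitting could alternatively be obtained by fixing a $\Q$-basis of $\vgroup$, choosing a positive preimage under $\val$ for each basis vector, and extending by taking rational powers — this is just an explicit incarnation of the same splitting argument.
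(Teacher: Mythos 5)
Your proof is correct and follows essentially the same route as the paper: both observe that real closedness makes $\vgroup$ a $\Q$-vector space (divisible via positive $n$-th roots, torsion-free because ordered) and then split the surjection $\val$ by lifting a $\Q$-basis of $\vgroup$ to positive elements and extending via rational powers. Your packaging, as a splitting of a short exact sequence of $\Q$-vector spaces $0\to U_{>0}\to\vfield_{>0}\to\vgroup\to 0$, is just a more conceptual phrasing of the paper's explicit basis construction, as you yourself note in the closing remark.
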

\begin{proof}
The case where the valuation is trivial is obtained by taking a cross-section equal to $1$. Therefore, we assume that the valuation is nontrivial. First, observe that in this case $\vgroup$ is a divisible group, as any positive element of $\vfield$ admits an $n$th root for every nonzero natural number $n$. 
Moreover, since $\vgroup$ is ordered, it is also torsion free. Therefore, given a nonzero natural number $n$ and $y \in \Gamma$, the equation $n z = y$ has a unique solution $z$ in $\Gamma$. It follows that we can regard $\vgroup$ as a vector space over $\Q$. Let $\{y_{i} \}_{i \in I}$ be a basis of this space. For every $i$ take $x_{i} \in \vfield$ such that $x_{i} > 0$ and $\val(x_{i}) = y_{i}$. For every finite subset $J \subset I$ and every $(\alpha_{j}) \in \Q^{J}$ define
\[
\xsec(\sum_{j \in J} \alpha_{j} y_{j}) = \prod_{j \in J} x_{j}^{\alpha_{j}} \, .
\]
It is obvious that $\xsec$ is a cross-section.
\end{proof}

Finally, we recall the notion of a convex valuation. Suppose that $\vfield$ is an ordered field with a total order $\ge$. We say that the valuation $\val$ is \emph{convex with respect to $\ge$} if it satisfies the following property: for every $x_{1} \in \vring$ and every $x_{2} \in \vfield$ we have the implication
\[
0 \le x_{2} \le x_{1} \implies x_{2} \in \vring \, .
\]
If $\vfield$ is a real closed field, it has a unique total order. In this case, the convexity property is understood in the sense of this order. It can be shown that if $\vfield$ is real closed and $\val$ is convex, then $\resfield$ is also real closed (see \cite[Theorem~4.3.7]{engler_prestel_valued_fields}). The field of Puiseux series is an example of a real closed field with convex valuation.

\section{Semilinearity of tropical semialgebraic sets}\label[section]{sec:model}

The goal of this section is to prove the following theorem.
\begin{theorem}\label[theorem]{theorem:image_finite_union_of_ri_polyh}
Let $\vfield$ be a real closed field equipped with a nontrivial and convex valuation~$\val$. Furthermore, suppose that the set $\salg \subset \vfield^{n}$ is semialgebraic. Then every stratum of $\val(\salg)$ is semilinear.
\end{theorem}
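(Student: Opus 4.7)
The plan is to invoke the Denef--Pas quantifier elimination theorem for valued fields in a three-sorted language $\lang$, with sorts for the valued field $\vfield$, the residue field $\resfield$, and the value group $\vgroup$ (augmented by $\zero$), equipped with the valuation $\val$ and an angular component $\angular$. The lemma proved in \cref{sec:valued_fields} provides a cross-section and hence an angular component for $\vfield$, so the hypotheses of Denef--Pas are met. Moreover, since the valuation is convex and $\vfield$ is real closed, $\resfield$ is itself real closed and $\vgroup$ is a divisible ordered abelian group.

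First, I would reduce to a single stratum. For a fixed subset $I \subset [n]$, I would set $\salg_I \coloneqq \{\bo x \in \salg : x_i = 0 \iff i \in I\}$, which is itself semialgebraic, and consider the projection $S_I$ of $\val(\salg_I)$ onto the coordinates indexed by $[n] \setminus I$. Writing $\salg$ as the set of solutions of a quantifier-free formula $\phi(\bo x)$ in the language of ordered rings, with parameters from $\vfield$, the set $S_I$ is defined by the $\lang$-formula
\[
\psi(\gamma) \equiv \exists \bo x \in \vfield^n,\; \phi(\bo x) \wedge \bigwedge_{i \in I} x_i = 0 \wedge \bigwedge_{i \notin I} \val(x_i) = \gamma_i \, .
\]

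Next, I would apply Denef--Pas to $\psi$ in order to eliminate all quantifiers over the valued-field sort. The result is a Boolean combination of atomic formulas that refer to the residue field sort and to the value group sort separately, the free variables $\gamma$ living only in the latter. Since $\resfield$ is real closed, I would then invoke quantifier elimination for RCF to existentially eliminate the residue-field variables, leaving an equivalent formula $\tilde\psi(\gamma)$ purely in the language of ordered abelian groups with constants from $\vgroup$. Quantifier elimination in the theory of divisible ordered abelian groups then implies that $\tilde\psi$ is a Boolean combination of linear inequalities with rational coefficients and parameters in $\vgroup$, so $S_I$ is semilinear, which is the desired conclusion for this stratum.

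The main obstacle is the initial setup: one must verify that Denef--Pas applies in the form needed here, for a general real closed valued field with a convex, possibly non-discrete valuation, and that the parameters appearing in $\phi$ — which may have arbitrary valuations and angular components — can be incorporated into the signature so that the starting formula genuinely is quantifier-free in $\lang$. A careful decomposition of each parameter according to its sign, valuation, and residue lets one accomplish this. Once the foundational point is settled, the successive quantifier eliminations across the three sorts are a routine composition of well-known results.
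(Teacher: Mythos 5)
Your proposal follows essentially the same route as the paper: apply the Denef--Pas quantifier elimination in the three-sorted language (valued field, residue field, value group with bottom), relying on the fact that a real closed valued field with convex valuation is henselian with real closed residue field, divisible value group, and an angular component coming from a cross-section, and then finish by eliminating residue-field variables via Tarski's theorem and value-group variables via quantifier elimination for divisible ordered abelian groups. The only cosmetic differences are that you pre-decompose into strata $\salg_I$ before applying quantifier elimination, whereas the paper works in the language of ordered groups with bottom element throughout and invokes a stratum-by-stratum semilinearity lemma (\cref{corollary:definable_semilinear}) at the end; and you worry about absorbing the parameters of $\phi$ into the signature, whereas the paper sidesteps this by treating the parameters as extra free variables, eliminating quantifiers, and only then substituting the concrete tuple $\overbar{b}$ (which turns the constants that appear into values $\val(f_{ik}(\overbar{b}))$ of the value group). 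Both choices are standard and do not affect the substance of the argument.
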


Let us detail the notions used in this statement. If $\rcfield$ is a real closed field, then we say that a subset $\salg \subset \rcfield^{n}$ is \emph{basic semialgebraic} if it is of the form
\[
\salg = \{x \in \rcfield^{n} \colon \forall i = 1, \dots p, P_{i}(x) > 0  \land \forall i = p+1, \dots, q, P_{i}(x) = 0 \} \, ,
\]
where $P_{i} \in \rcfield[X_{1}, \dots, X_{n}]$ are polynomials. We say that $\salg$ is \emph{semialgebraic} if it is a finite union of basic semialgebraic sets. 
Similarly, if $\doag$ is a divisible ordered abelian group, then we say that a set $\slin \subset \doag^{n}$ is \emph{basic semilinear} if it is of the form
\[
\slin = \{g \in \doag^{n} \colon \forall i  = 1, \dots, p, f_{i}(g) > h^{(i)}, \forall i = p+1, \dots, q, f_{i}(g) = h^{(i)} \} \, ,
\]
where $f_{i} \in \Z[X_{1}, \dots, X_{n}]$ are homogeneous linear polynomials with integer coefficients and $h^{(i)} \in \doag$. We say that $\slin$ is \emph{semilinear} if it is a finite union of basic semilinear sets.

Finally, since we are interested in valuations of semialgebraic sets defined in valued fields, we work with $\doag \cup \{ \zero \}$ rather than $\doag$. Any set $S \subset (\doag \cup \{ \zero \})^{n}$ is naturally stratified as follows: the \emph{support} of a point $x \in (\doag \cup \{ \zero \})^n$ is defined as the set of indices $k \in [n]$ such that $x_k \neq \zero$. Given a nonempty subset $K \subset [n]$, and a set $S \subset (\doag \cup \{ \zero \})^n$, we define the \emph{stratum of $S$  associated with $K$} as the subset of $\doag^{\card K}$ formed by the projection $(x_k)_{k \in K}$ of the points $x \in S$ with support~$K$.

The rest of \cref{sec:model} is devoted to the presentation of the proof of \cref{theorem:image_finite_union_of_ri_polyh}, 
which relies on model theoretic results in valued fields. 
After a preliminary section on model theory (\cref{sec:languages}), we explain how \cref{theorem:image_finite_union_of_ri_polyh} is obtained from a quantifier elimination technique in valued fields of Denef and Pas (\cref{sec:qe}).

\subsection{Languages and structures}\label[section]{sec:languages}

In this section we recall some basic notions from model theory. We refer to \cite[Chapter~1]{marker_model_theory} and \cite[Chapter~1]{tent_ziegler_model_theory} for more information. In model theory, a \emph{language} $\lang$ is a collection of \emph{symbols} that are divided into three sets: a set of \emph{constant symbols}, a set of \emph{function symbols}, and a set of \emph{relation symbols}. 
For example, $\logroups \coloneqq (0, +, \le)$ is the language of ordered groups, while $\lorings \coloneqq (0, 1, +, -, \cdot, \le)$ is the language of ordered rings. 

An \emph{$\lang$-structure} is a tuple $\lstructure \coloneqq (\universe, \lang)$,
where $\universe$ is a nonempty set (called a \emph{domain}) and every symbol of $\lang$ can be interpreted in $\universe$. For instance, if $\oag = (\oag, 0, +, \le)$ is an ordered abelian group, then we can interpret the symbol $0$ as zero in $\oag$, the symbol $+$ as addition, and the symbol $\le$ as order in $\oag$. 
Thus, every ordered abelian group is an $\logroups$-structure.

The formalism introduced above 
enables us to study the first-order formulas over $\lang$ (or \emph{$\lang$-formulas}). The atoms of these formulas are constructed by applying relation symbols to terms built out of variables, and functions and constants from $\lang$.

Given an $\lang$-formula $\psi$ and a variable $x$, an occurrence of $x$ is said to be \emph{bound} if it is located within the scope of a subformula of the form $\forall x \dots$ or $\exists x \dots$. Other occurrences of the variable $x$ are said to be \emph{free}. By extension, the variable $x$ is said to be \emph{free} when it occurs freely in the formula $\psi$. Up to renaming some of the variables, we can suppose that free variables do not have bound occurrences.

We often denote an $\lang$-formula $\psi$ as $\psi(X)$, where $X = (x_{1}, \dots, x_{n})$ is a string of free variables that occur in $\psi$. If $\lstructure$ is an $\lang$-structure with domain $\universe$ and we fix a vector $\overbar{X} \in \universe^{n}$, then $\psi(\overbar{X})$ can be interpreted as a meaningful statement about $\lstructure$. This statement can be either true or false. For example, if we fix an ordered abelian group $\oag$, then the $\logroups$-formula $\forall x_{1} (x_{1} \ge 0 \limplies \exists x_{2} (x_{2} \ge 0 \land x_{1} = x_{2} + x_{2}))$ has no free variables. It is interpreted in $\oag$ as ``for every nonnegative element $x_{1} \in \oag$, there exist a nonnegative element $x_{2} \in \oag$ such that $x_{1}$ is equal to $x_{2}$ added to $x_{2}$.'' Note that this is true if we take $\oag = (\Q, +, \le)$, but false if we take $\oag = (\Z, +, \le)$. Similarly, the $\logroups$-formula $\exists x_{2} (x_{1} = x_{2} + x_{2})$ has one free variable $x_{1}$. If we take $\oag = (\Z, +, \le)$, then $\psi(2)$ is true, but $\psi(1)$ is false.
We denote ``$\psi(\overbar{X})$ is true in $\lstructure$'' as $\lstructure \models \psi(\overbar{X})$. A formula without free variables is called a \emph{sentence}. A set $\definable \subset \universe^{n}$ is called \emph{definable (in $\lang$)} if there exists a number $m \ge 0$, a vector $\overbar{b} \in \universe^{m}$, and an $\lang$-formula $\psi(x_{1}, \dots, x_{n+m})$ such that
\[
\definable = \{x \in \universe^{n} \colon \lstructure \models \psi(x_{1}, \dots, x_{n}, \overbar{b}) \} \, .
\]

\begin{example}\label[example]{ex:semilinear_from_formulas}
Take an $\logroups$-structure $\lstructure = (\doag, 0, +, \le)$, where $\doag$ is a divisible ordered abelian group. Suppose that $\psi(x_{1}, \dots, x_{n+m})$ is a quantifier-free $\logroups$-formula (i.e., a formula that does not contain quantifier symbols). Then $\slin = \{x \in \doag^{n} \colon \lstructure \models \psi(x_{1}, \dots, x_{n}, \overbar{b}) \}$ is a semilinear set. Conversely, every semilinear set can be written in such form.
\end{example}

If $\lang$ is a language, then any set of $\lang$-sentences is called a \emph{theory}. In our context, one can think that a theory is a set of axioms. If $\theory$ is a fixed theory in $\lang$, then we say that an $\lang$-structure $\lstructure$ is a \emph{model} of $\theory$ when we have $\lstructure \models \psi$ for every $\psi \in \theory$. 
Furthermore, if $\psi$ is an $\lang$-sentence that does not necessarily belong to $\theory$, then we say that $\psi$ is a \emph{logical consequence of $\theory$}, if $\psi$ is true in every model of $\theory$. We say that $\lang$-formulas $\psi(X)$, $\phi(X)$ are \emph{equivalent in $\theory$} if the sentence $\forall x_{1} \dots \forall x_{n} \ \psi(X) \liff \phi(X)$ is a logical consequence of $\theory$. We say that the theory $\theory$ admits \emph{quantifier elimination} if every $\lang$-formula 
is equivalent in $\theory$ to a quantifier-free formula. Finally, we say that a theory $\theory$ is \emph{complete} if for every $\lang$-sentence $\psi$, either $\psi$ or $\lnot \psi$ is a logical consequence of $\theory$.

\begin{example}
The theory of real closed fields, denoted $\thRcf$, is a theory in the language of ordered rings $\lorings$. It consists of the usual axioms of ordered fields, the axiom $\forall x_{1} (x_{1} \ge 0 \limplies \exists x_{2} (x_{1} = x_{2} \cdot x_{2}))$ that governs the existence of square roots, and an infinite set of axioms that states the fact that every polynomial of an odd degree has a root. In other words, for every $n \ge 1$, $\thRcf$ contains the axiom $\forall x_{0} \dots \forall x_{2n} \exists x (x^{2n+1} + x_{2n}x^{2n} + \dots + x_{1}x + x_{0} = 0)$. A classical result due to Tarski states that this theory admits quantifier elimination and is complete (see \cite[Theorem~3.3.15 and Corollary~3.3.16]{marker_model_theory}). As an immediate corollary one sees that if $\rcfield$ is a real closed field, then a set $\salg \subset \rcfield^{n}$ is definable in $\lorings$ if and only if it is semialgebraic.
\end{example}

In the next section, we use divisible ordered abelian groups which arise as value groups of nonarchimedean real closed fields. Since the valuation map may evaluate to $-\infty$, we need to deal with divisible ordered abelian groups with bottom element. In more details, we denote by $\logb \coloneqq (0, -\infty, +, \le)$ the language of ordered groups with bottom element. The theory of nontrivial divisible ordered abelian groups with bottom element, denoted $\thDoagb$, consists of the axioms of divisible ordered abelian groups, the nontriviality axiom $\exists y (y \neq 0 \land y \neq -\infty)$, and the axioms that extend the addition and order to $-\infty$, namely $\forall y ({-\infty} + y = -\infty)$ and $\forall y (y \ge -\infty)$. As stated in the next proposition, this theory admits quantifier elimination and is complete. It follows from the fact that the same result holds in the case of groups without bottom element \cite[Corollary 3.1.17]{marker_model_theory}. 

\begin{proposition}\label[proposition]{theorem:qe_doagb}%
The theory $\thDoagb$ admits quantifier elimination and is complete. Moreover, any $\logb$-formula $\theta(Y)$ with $Y = (y_{1}, \dots, y_{m})$ and $m \ge 1$ is equivalent to a quantifier-free formula of the form
\[
\biglor_{\Sigma \subset [m]} \bigl( (\forall \sigma \in \Sigma,  y_{\sigma} \neq -\infty \bigr) \ \land \ (\forall \sigma \notin \Sigma, y_{\sigma} = -\infty) \ \land \ \psi_{\Sigma} \bigr) \, ,
\]
where every $\psi_{\Sigma}$ is a quantifier-free $\logroups$-formula over a subset of variables in $\{y_\sigma\}_{\sigma \in \Sigma}$.
\end{proposition}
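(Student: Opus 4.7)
The plan is to bootstrap from the classical quantifier elimination result for nontrivial divisible ordered abelian groups in the language $\logroups$ (see e.g.\ \cite[Corollary~3.1.17]{marker_model_theory}) by handling the bottom element $-\infty$ through a finite case analysis on the free variables. The key point is that $-\infty$ is absorbing for $+$ and minimal for $\le$, so once, for each variable, we declare whether it equals $-\infty$ or not, every occurrence of $-\infty$ in any atomic subformula can be resolved, and what remains is a purely $\logroups$-statement about the variables that take values in the group $\doag$.

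\textbf{Step 1 (atomic reduction).} Every $\logb$-term built from $y_1,\dots,y_m$ and the constants $0,-\infty$ is either syntactically $-\infty$ (when the constant $-\infty$ appears in it) or an $\logroups$-term in the variables. Given a subset $\Sigma\subset[m]$, impose $y_\sigma\neq-\infty$ for $\sigma\in\Sigma$ and $y_\sigma=-\infty$ for $\sigma\notin\Sigma$. Then every atomic $\logb$-formula simplifies either to a tautology, a contradiction, or an atomic $\logroups$-formula over $\{y_\sigma\}_{\sigma\in\Sigma}$. Consequently, any quantifier-free $\logb$-formula $\theta(Y)$ is equivalent modulo $\thDoagb$ to a formula of the announced disjunctive form, with each $\psi_\Sigma$ a quantifier-free $\logroups$-formula.

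\textbf{Step 2 (quantifier elimination).} I would proceed by induction on the structure of $\theta$, the nontrivial step being the elimination of a single existential quantifier $\exists y\,\phi(y,Y)$. By the induction hypothesis applied to $\phi$, we may assume that $\phi(y,Y)$ is equivalent to a disjunction indexed by subsets $\Sigma'\subset[m]\cup\{y\}$, of the announced form. Split the disjuncts according to whether $y\in\Sigma'$ or not. Disjuncts with $y\notin\Sigma'$ contain the clause $y=-\infty$, so $\exists y$ just instantiates $y=-\infty$ and reduces these to a formula handled by Step~1. For disjuncts with $y\in\Sigma'$, the clause $y\neq-\infty$ is present and $\psi_{\Sigma'}$ is a quantifier-free $\logroups$-formula over $\{y_\sigma\}_{\sigma\in\Sigma'}$; applying classical QE for nontrivial divisible ordered abelian groups to $\exists y\,\psi_{\Sigma'}(y,\cdot)$ produces an equivalent quantifier-free $\logroups$-formula over the remaining variables. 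Regrouping the resulting disjuncts by their restriction to $[m]$ yields the prescribed normal form for $\exists y\,\phi$.

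\textbf{Step 3 (completeness) and main obstacle.} Completeness follows immediately from Step~2 applied with no free variables: every $\logb$-sentence reduces to a Boolean combination of atomic statements involving only $0$ and $-\infty$, whose truth is decided by the axioms of $\thDoagb$ alone (equivalently, by invoking the known completeness of the theory of nontrivial divisible ordered abelian groups on the $\logroups$-piece). The main obstacle I anticipate is the bookkeeping in Step~2: one has to keep track of the ``$-\infty$-status'' of every variable throughout the inductive elimination, and to check that the classical QE for divisible ordered abelian groups can be invoked in the required form, in particular that the nontriviality hypothesis on the value group---guaranteed by the nontriviality axiom built into $\thDoagb$---is uniformly available whenever needed.
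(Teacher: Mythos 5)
Your proposal is correct and follows essentially the outline the paper itself gestures at: both reduce to the classical QE and completeness result for nontrivial divisible ordered abelian groups (\cite[Corollary~3.1.17]{marker_model_theory}), handle $-\infty$ by a finite case split on which variables are $-\infty$, and eliminate quantifiers inductively on the structure of the formula (the paper's ``double induction over~$m$ and the length of~$\theta$''). The paper omits the proof; your version fills it in with the expected argument and correctly tracks the normal form through the elimination of an existential quantifier by splitting on whether the bound variable is declared $-\infty$ or not.
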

This proposition can be easily proven from \cite[Corollary~3.1.17]{marker_model_theory} using double induction over~$m$ and the length of $\theta$. We omit the proof for brevity. We emphasize that every $\psi_{\Sigma}$ is a $\logroups$-formula, i.e., a formula that does not contain the symbol $-\infty$. As a consequence of \cref{theorem:qe_doagb} and the discussion in \cref{ex:semilinear_from_formulas}, we get the following characterization of definable sets.
\begin{corollary}\label[corollary]{corollary:definable_semilinear}
Suppose that $\doag$ is a nontrivial divisible abelian group. Then $\slin \subset (\doag \cup \{-\infty \})^{n}$ is definable in $\logb$ if and only if every stratum of $\slin$ is semilinear.
\end{corollary}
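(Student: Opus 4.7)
The plan is to derive both implications from the quantifier elimination of \cref{theorem:qe_doagb} together with \cref{ex:semilinear_from_formulas}. The crucial observation is that the canonical form provided by \cref{theorem:qe_doagb} already separates the combinatorial datum of which coordinates equal $\zero$ from a pure $\logroups$-condition on the remaining coordinates, and this separation mirrors exactly the definition of the stratification of $\slin$.

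For the forward direction, suppose $\slin$ is definable in $\logb$ via a formula $\psi(X, Y)$ with a parameter $\overbar{b} \in (\doag \cup \{\zero\})^m$ substituted for $Y = (y_1, \dots, y_m)$. Apply \cref{theorem:qe_doagb} to $\psi$, viewed as a formula in $n + m$ free variables, to obtain an equivalent quantifier-free disjunction
\[
\biglor_{\Sigma \subset [n+m]} \Bigl( \bigl( \bigwedge_{\sigma \in \Sigma} z_\sigma \neq \zero \bigr) \land \bigl( \bigwedge_{\sigma \notin \Sigma} z_\sigma = \zero \bigr) \land \psi_\Sigma \Bigr),
\]
where $z_\sigma$ stands for the $\sigma$-th variable of $(X, Y)$, and each $\psi_\Sigma$ is a quantifier-free $\logroups$-formula on the variables indexed by $\Sigma$. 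Let $K' \subset [m]$ denote the support of $\overbar{b}$. For any nonempty $K \subset [n]$, a point $\tilde{x} \in (\doag \cup \{\zero\})^n$ of support $K$ satisfies $\psi(\tilde{x}, \overbar{b})$ if and only if the unique compatible disjunct, indexed by $\Sigma = K \cup (n + K')$, holds. Therefore the stratum of $\slin$ associated with $K$ is defined, as a subset of $\doag^{|K|}$, by the quantifier-free $\logroups$-formula $\psi_{K \cup (n + K')}\bigl((x_k)_{k \in K}, (b_j)_{j \in K'}\bigr)$, and \cref{ex:semilinear_from_formulas} yields its semilinearity.

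For the converse, assume that every stratum $\slin_K \subset \doag^{|K|}$ (for nonempty $K \subset [n]$) is semilinear. By \cref{ex:semilinear_from_formulas}, each $\slin_K$ is defined by a quantifier-free $\logroups$-formula $\phi_K\bigl((x_k)_{k \in K}, \overbar{b}^{(K)}\bigr)$ with parameters $\overbar{b}^{(K)} \in \doag^{m_K}$. Let $\phi_\emptyset$ be the tautology or its negation, according to whether the all-$\zero$ point of $(\doag \cup \{\zero\})^n$ belongs to $\slin$. Then $\slin$ is defined in $\logb$ by
\[
\Psi(X) \coloneqq \biglor_{K \subset [n]} \Bigl( \bigl( \bigwedge_{k \in K} x_k \neq \zero \bigr) \land \bigl( \bigwedge_{k \notin K} x_k = \zero \bigr) \land \phi_K \Bigr),
\]
using the concatenation of the $\overbar{b}^{(K)}$'s as the parameter vector.

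No deep obstacle is expected; the point requiring attention is the bookkeeping in the forward direction, namely the verification that substituting a specific $\overbar{b}$ collapses the disjunction over $\Sigma \subset [n+m]$ to one in which the parameter-side of $\Sigma$ is forced to equal the support of $\overbar{b}$, leaving a quantifier-free $\logroups$-formula in exactly the variables of the relevant stratum. Once this is recognized, both directions amount to mechanical translations between $\logb$-formulas in the normal form of \cref{theorem:qe_doagb} and unions of semilinear sets indexed by the strata.
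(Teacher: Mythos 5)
Your argument is correct and fills in exactly the details the paper leaves implicit: it says the corollary follows from \cref{theorem:qe_doagb} together with \cref{ex:semilinear_from_formulas}, and your proof carries out that deduction in both directions, matching the normal-form disjunction over supports against the strata. The bookkeeping identifying the unique compatible disjunct $\Sigma = K \cup (n + K')$ once $\overbar{b}$ is substituted is the intended observation.
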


\subsection{Quantifier elimination in real closed valued fields}\label[section]{sec:qe}
In this section, we want to show quantifier elimination over real closed fields equipped with a nontrivial and convex valuation. We suppose that $\vfield$ is a real closed field and $\val \colon \vfield \to \vgroup \cup \{-\infty\}$ is a valuation that is nontrivial and convex. 
We denote by $\resfield$ the residue field of $(\vfield, \vgroup, \val)$, and by $\angular$ we denote any angular component of this field. Under these conditions, $\vgroup$ is divisible and $\resfield$ is real closed, as noted in \cref{sec:valued_fields}.
In order to describe such structures, we consider
the following \emph{three-sorted language}
\[
\lvrcf \coloneqq (\lang_{\vfield}, \lang_{\vgroup}, \lang_{\resfield}, \val, \angular) \, .
\]
Here, $\lang_{\vfield}$ and $\lang_{\resfield}$ denote the language of ordered rings (respectively associated with $\vfield$ and~$\resfield$), $\lang_{\vgroup}$ denotes the language of ordered groups with bottom element, $\val$ is a symbol for valuation map, and $\angular$ is a symbol for angular component. In the language $\lvrcf$, any formula has 
three kinds of variables, one kind for every sort. Let $x_{1}, x_{2}, \dots$ denote the variables associated with $\vfield$, $y_{1}, y_{2}, \dots$ denote the variables associated with $\vgroup \cup \{-\infty \}$, and $z_{1}, z_{2}, \dots$ denote the variables associated with $\resfield$. If $\theta$ is a $\lvrcf$-formula, then we denote it as $\theta(X, Y ,Z)$, where $X, Y, Z$ are sequences of free variables associated with $\vfield$, $\vgroup \cup \{-\infty \}$, $\resfield$ respectively. The constant, function, and relation symbols of the language $\lvrcf$ are implicitly typed. For instance, the addition symbol of $\lang_\vfield$ takes two elements of the sort $\vfield$, and returns an element of the same sort. The symbol $\val$ yields an element of the sort $\vgroup$ from an element of the sort~$\vfield$. Then, $\lvrcf$-formulas are built from the symbols of the language $\lvrcf$ and variables in such a way that every term and atom is well typed. We refer to~\cite[Chapter~1]{tent_ziegler_model_theory} for a formal treatment of multisorted languages.

Let us denote by $\thVrcf$ the theory of valued fields with angular component which are real closed and have a nontrivial and convex valuation. More precisely, the theory consists of the axioms of the theory of real closed fields for $\vfield$, the axioms of the theory of ordered abelian groups with bottom element for $\vgroup \cup \{-\infty\}$, the axioms of ordered fields for $\resfield$, the axioms specifying that $\val$ is a nontrivial and convex valuation, and the axioms specifying that $\angular$ is an angular component. Note that the latter axioms imply that $\resfield$ is indeed the residue field of $(\vfield, \vgroup, \val)$. In the next theorem, we show that this theory admits quantifier elimination. The cornerstone of the proof is a result due to Pas~\cite{pas_cell_decomposition,pas_mixed_characteristic}, which establishes that the theory of henselian valued fields with angular component admits elimination of quantifiers over the $\vfield$-variables. We refer to \cite{cluckers_analytic_cell_decomposition} for more recent generalizations of Pas's result.

\begin{theorem}\label[theorem]{theorem:qe_vrcf}%
The theory $\thVrcf$ admits quantifier elimination and is complete. Moreover, any $\lvrcf$-formula $\theta(X, Y, Z)$ is equivalent in $\thVrcf$ to a formula of the form
\begin{equation*}
\begin{aligned}
\biglor_{i = 1}^{m} \Bigl( \phi_{i}\bigl(\val(f_{i1}(X)), \dots, \val(f_{ik_{i}}(X)),Y \bigr) \land \psi_{i}\bigl(\angular(f_{i(k_{i}+1)}(X)), \dots, \angular(f_{il_{i}}(X)), Z \bigr) \Bigr) \, ,
\end{aligned}
\end{equation*}
where, for every $i = 1, \dots, m$, 
$f_{i1}, \dots, f_{il_{i}} \in \Z[X]$ are polynomials with integer coefficients, $\phi_{i}$ is a quantifier-free $\lang_{\vgroup}$-formula, and $\psi_{i}$ is a quantifier-free $\lang_{\resfield}$-formula.
\end{theorem}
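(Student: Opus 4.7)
The plan is to deduce the theorem from the multi-sorted quantifier elimination result of Pas~\cite{pas_cell_decomposition,pas_mixed_characteristic}, which asserts that in any henselian valued field of residue characteristic zero equipped with an angular component, every $\lvrcf$-formula is equivalent to one in precisely the normal form stated here, where the $\vfield$-variables appear only through terms of the form $\val(f_i(X))$ and $\angular(f_j(X))$ for polynomials $f_i, f_j \in \Z[X]$. Once Pas's theorem is available in our setting, quantifier elimination in $\thVrcf$ follows by further eliminating quantifiers in each of the remaining two sorts independently: the $\lang_{\vgroup}$-subformulas $\phi_i$ are handled by the divisible-ordered-abelian-group result of \cref{theorem:qe_doagb}, and the $\lang_{\resfield}$-subformulas $\psi_i$ are handled by Tarski's theorem on quantifier elimination in real closed fields.

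The main point to verify is that the hypotheses of Pas's theorem are satisfied by an arbitrary model $(\vfield, \vgroup, \val)$ of $\thVrcf$. First, since $\val$ is convex and $\vfield$ is real closed, the residue field $\resfield$ is real closed by \cite[Theorem~4.3.7]{engler_prestel_valued_fields}; in particular $\resfield$ has characteristic zero. Second, $(\vfield, \vring)$ is henselian: indeed, a convex valuation ring in a real closed field is henselian because a real closed field admits a unique ordering, from which one can lift simple roots from $\resfield$ to $\vfield$ via the intermediate value property. Third, $\thVrcf$ explicitly includes an angular component $\angular$, whose existence over real closed valued fields is guaranteed by the lemma preceding the theorem. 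Hence Pas's cell decomposition applies and yields the stated normal form.

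Once each subformula $\phi_i$ in the value-group sort and $\psi_i$ in the residue-field sort is replaced by a quantifier-free equivalent, the whole formula becomes a disjunction of conjunctions of the desired type, establishing quantifier elimination for $\thVrcf$. Completeness then follows in a standard way: every $\lvrcf$-sentence is, by quantifier elimination, equivalent to a quantifier-free sentence; such a sentence is a Boolean combination of $\lang_\vgroup$-sentences and $\lang_\resfield$-sentences whose truth value is determined by the completeness of $\thDoagb$ (\cref{theorem:qe_doagb}) and the completeness of $\thRcf$. Since the constants and the distinguished symbols $0, 1, -\infty$ are shared across all models, any two models of $\thVrcf$ agree on all sentences.

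The main technical obstacle is the careful bookkeeping of sorts when invoking Pas's theorem, and in particular checking henselianity of real closed valued fields with convex valuation; once this is in place, the rest is a routine assembly of classical quantifier-elimination results in the value group and in the residue field. An alternative, self-contained route would reprove Pas's theorem by a back-and-forth argument between two $\aleph_1$-saturated models of $\thVrcf$, but invoking the Pas statement directly is considerably shorter and is the path we shall follow.
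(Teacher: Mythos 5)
Your overall strategy coincides with the paper's: invoke Pas's quantifier elimination for henselian valued fields of residue characteristic zero with angular component, after noting that models of $\thVrcf$ are henselian with real closed (hence characteristic zero) residue fields and admit an angular component; then eliminate residual quantifiers in the value-group and residue-field sorts via the completeness/QE results of $\thDoagb$ and Tarski; finally deduce completeness from the quantifier-free normal form. Your treatment of completeness and of the residual sorts matches the paper.

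However, there is a genuine gap at the very first step. Pas's cell decomposition theorem is formulated for the Denef--Pas language, in which the valued-field sort and the residue-field sort carry only the language of \emph{rings}, not ordered rings. The language $\lvrcf$, by contrast, has an order symbol $\le$ in both the $\lang_\vfield$ and $\lang_\resfield$ sorts, since we are dealing with real closed valued fields. So you cannot apply Pas's theorem directly to an $\lvrcf$-formula as you do: his normal form does not subsume atoms of the form $f(X) \le g(X)$ in the valued-field sort or in the residue-field sort. The paper deals with this by first rewriting $x_1 \le x_2$ as $\exists x_3\, (x_2 - x_1 = x_3^2)$, which is valid in any real closed field, and thereby inductively eliminating all occurrences of $\le$ from the $\lang_\vfield$ and $\lang_\resfield$ parts of $\theta$; only then is Pas applied. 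This preprocessing is small but unavoidable, and your argument is missing it. Once you add it, the remainder of your proof (henselianity from convexity of the valuation, existence of the angular component from the preceding lemma on cross-sections, the two further QE steps, and the completeness argument) goes through in the same way as the paper's.
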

\begin{proof}
Let $\theta(X, Y, Z)$ denote any $\lvrcf$-formula. Recall that the order $\le$ in any real closed field can be defined as $x_{1} \le x_{2} \iff \exists x_{3} ( x_{2} - x_{1} = x_{3}^{2})$. This enables us to inductively eliminate all occurrences of the symbols $\le$ of the languages $\lang_\vfield$ and $\lang_\resfield$. Therefore,  $\theta(X, Y, Z)$ is equivalent in $\thVrcf$ to a formula $\hat{\theta}(X, Y, Z)$ without the symbol~$\le$. Moreover, by \cite[Theorem~4.3.7]{engler_prestel_valued_fields}, $(\vfield, \vgroup, \val)$ is henselian. 
This enables us to apply the quantifier elimination of Pas \cite[Theorem~4.1]{pas_cell_decomposition}. (To be more precise, we use the formulation given in \cite[Theorem~4.2]{cluckers_analytic_cell_decomposition}.) As a result, $\hat{\theta}(X, Y, Z)$ is equivalent in $\thVrcf$ to a formula of the form
\begin{equation}\label[equation]{eq:qe_vrcf_proof}
\begin{aligned}
\biglor_{i = 1}^{m} \Bigl( \phi_{i}\bigl(\val(f_{i1}(X)), \dots, \val(f_{ik_{i}}(X)),Y \bigr) \land \psi_{i}\bigl(\angular(f_{i(k_{i}+1)}(X)), \dots, \angular(f_{il_{i}}(X)), Z \bigr) \Bigr) \, ,
\end{aligned}
\end{equation}
where, for every $i = 1, \dots, m$, 
$f_{i1}, \dots, f_{il_{i}} \in \Z[X]$ are polynomials with integer coefficients, $\phi_{i}$ is an $\lang_{\vgroup}$-formula, and $\psi_{i}$ is an $\lang_{\resfield}$-formula. Then, we apply \cref{theorem:qe_doagb} and \cite[Theorem~3.3.15]{marker_model_theory} to eliminate the quantifiers in the formulas $\phi_{i}$ and $\psi_{i}$. This shows the last part of the statement.

In the case where $\theta$ is a sentence, the formulas $\phi_i$ and $\psi_i$ are also sentences. The completeness results in \cref{theorem:qe_doagb} and~\cite[Corollary~3.3.16]{marker_model_theory} applied to each subformula $\phi_i$ and $\psi_i$ in~\cref{eq:qe_vrcf_proof} allow to prove that either $\theta$ or $\lnot \theta$ is a logical consequence of $\thVrcf$.
\end{proof}
As a corollary, we obtain \cref{theorem:image_finite_union_of_ri_polyh}.
\begin{proof}[Proof of~\cref{theorem:image_finite_union_of_ri_polyh}]
Let $\vgroup$ denote the value group of $\vfield$ and $\resfield$ denote the residue field. The structure $\lstructure = (\vfield, \vgroup \cup \{-\infty\}, \resfield, \lvrcf)$ is a model of $\thVrcf$. Let $\phi(x_{1}, \dots, x_{n+m})$ be an $\lang_{\vfield}$-formula and $\overbar{b} \in \vfield^{m}$ be a vector such that $\salg = \{x \in \vfield^{n} \colon \vfield \models \phi(x, \overbar{b}) \}$. Take the formula $\theta(x_{n+1}, \dots, x_{n+m}, y_{1}, \dots, y_{n})$ in $\lvrcf$ defined as
\[
\exists x_{1} \dots \exists x_{n} \Bigl(  \phi(x_{1}, \dots, x_{n+m})  \land \val(x_{1}) = y_{1} \land \dots \land \val(x_{n}) = y_{n} \Bigr) \, .
\]
We obviously have 
\[
\val(\salg) = \{ y \in (\Gamma \cup \{ \zero\})^{n} \colon \lstructure \models \theta(\overbar{b}, y)\} \, .
\]
By~\cref{theorem:qe_vrcf}, $\theta$ is equivalent to a formula of the form
\begin{equation*}
\begin{aligned}
\biglor_{i = 1}^{m} \Bigl( \phi_{i}\bigl(\val(f_{i1}(X)), \dots, \val(f_{ik_{i}}(X)),Y \bigr) \land \psi_{i}\bigl(\angular(f_{i(k_{1}+1)}(X)), \dots, \angular(f_{il_{i}}(X)) \bigr) \Bigr) \, ,
\end{aligned}
\end{equation*}
where we denote $X \coloneqq (x_{n+1}, \dots, x_{n+m})$, $Y \coloneqq (y_{1}, \dots, y_{n})$, every $\phi_{i}$ is an $\lang_\vgroup$-formula, every $\psi_{i}$ is an $\lang_\resfield$-formula, and $f_{i1}, \dots, f_{il_{i}}$ are polynomials with integer coefficients. If we fix $X$ to be equal to $\overbar{b}$, then this formula is equivalent to a formula of the form
\[
\biglor_{i \in I} \phi_i\bigl( \xi_{i1}, \dots, \xi_{ik_i},Y \bigr) \, ,
\]
where $I$ is a subset of $[m]$ and we denote $\val \bigl(f_{ik}(\overbar{b}) \bigr) = \xi_{ik} \in \vgroup$. Hence, $\val(\salg)$ is definable in $\lang_{\vgroup}$. By \cref{corollary:definable_semilinear}, $\val(\salg)$ has semilinear strata.
\end{proof}

\section{Closedness of tropical semialgebraic sets}\label[section]{sec:images}
In this section we strengthen \cref{theorem:image_finite_union_of_ri_polyh} by showing that the strata of $\val(\salg)$ are not only semilinear but also closed. More precisely, we show the following theorem. 
\begin{theorem}\label[theorem]{theorem:images_are_closed}
Let $\vfield$ be a real closed field equipped with a nontrivial and convex valuation~$\val$. Suppose that set $\salg \subset \vfield^{n}$ is semialgebraic. Then every stratum of $\val(\salg)$ is closed in the product topology of the order topology of the value group $\vgroup$. 

Furthermore, if $\salg$ is closed, then $\val(\salg)$ is closed in the product topology of the order topology of $\vgroup \cup \{-\infty \}$.
\end{theorem}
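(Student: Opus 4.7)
The plan is to prove both claims by a model-theoretic saturation argument, combined with the completeness of $\thVrcf$ established in \Cref{theorem:qe_vrcf}. I would fix a $|\vfield|^+$-saturated elementary extension $\vfield^* \succeq \vfield$, with value group $\vgroup^*$, so that any first-order statement in $\lvrcf$ with parameters from $\vfield \cup \vgroup$ holds in $\vfield$ if and only if it holds in $\vfield^*$.

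For the first claim, fix $K \subseteq [n]$ and let $S_K \subseteq \vgroup^K$ denote the stratum of $\val(\salg)$ associated with $K$. Suppose $y \in \vgroup^K$ lies in the closure of $S_K$, and let $\phi(X, \bar{b})$ be an $\lorings$-formula defining $\salg$. I consider the partial type over $\vfield$
\[
p(X) \coloneqq \{\phi(X, \bar{b})\} \cup \bigl\{y_k - \epsilon < \val(X_k) < y_k + \epsilon : k \in K, \ \epsilon \in \vgroup, \ \epsilon > 0\bigr\} \cup \{X_k = 0 : k \notin K\}.
\]
Each finite subset of $p$ is satisfiable in $\vfield$: the closure assumption yields a point of $S_K$ within the smallest $\epsilon$ appearing in the subset, witnessed by some $x \in \salg$ with support exactly $K$ and $\val(x_K)$ suitably close to $y$. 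By saturation, the type $p$ is realized by some $x^* \in \vfield^{*n}$, and the resulting first-order sentence $\exists x \bigl(\phi(x, \bar{b}) \wedge \bigwedge_{k \in K} \val(x_k) = y_k \wedge \bigwedge_{k \notin K} x_k = 0\bigr)$ holds in $\vfield^*$, hence in $\vfield$ by elementary equivalence, proving $y \in S_K$.

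For the second claim, with $y \in (\vgroup \cup \{-\infty\})^n$ in the closure of $\val(\salg)$ and $K \coloneqq \{k : y_k \neq -\infty\}$, I would reduce to the first claim applied to the closed semialgebraic set $D_K \coloneqq \salg \cap \{x : x_k = 0 \text{ for } k \notin K\}$, viewed as a subset of $\vfield^K$. Since the stratum of $\val(D_K)$ associated with $K$ is closed in $\vgroup^K$ by the first claim, it suffices to show that $y_K$ belongs to its closure: that is, for every $\epsilon > 0$ there exists $z \in D_K$ with $|\val(z_k) - y_k| < \epsilon$ for all $k \in K$. Here closedness of $\salg$ is essential, because the closure condition on $y$ only provides, for every $M \in \vgroup$, points $x \in \salg$ with $|x_{K^c}| < t^M$ rather than $x_{K^c} = 0$. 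The plan is to realize in $\vfield^*$ a relaxed variant of $p$ obtained by replacing ``$X_k = 0$'' with the family of conditions ``$|X_k| < \delta$ for every $\delta \in \vfield$, $\delta > 0$'', and then to invoke closedness of $\salg^*$, transferred from closedness of $\salg$ via elementary equivalence, in order to upgrade the resulting realizer into one with exactly zero $K^c$-coordinates. The main obstacle is precisely this upgrade step, since elements infinitesimal with respect to $\vfield$ need not be infinitesimal with respect to $\vfield^*$; it requires a delicate interplay between saturation and the first-order expression of closedness in the order topology.
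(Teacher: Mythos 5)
Your proposal attempts a purely model-theoretic route (saturation plus elementary extensions) in place of the paper's strategy, which proves the result concretely for $\puiseux$---via \cref{lemma:images_are_closed_one_stratum} (an inductive argument using the polyhedral complex $\complex(P_1,\dots,P_q)$ and explicit lifts of the form $t^{w_k}$) and \cref{point_with_zeros}---and then transfers to an arbitrary real closed valued field by the completeness statement in \cref{theorem:qe_vrcf}. That is a genuinely different plan, and if it worked it would be more uniform: no special role for $\puiseux$, no projection induction. Unfortunately, there is a gap already in your argument for the first claim.

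The step ``by saturation, the type $p$ is realized by some $x^* \in \vfield^{*n}$, and the resulting first-order sentence $\exists x\,(\phi(x,\bar b)\wedge\bigwedge_{k\in K}\val(x_k)=y_k\wedge\bigwedge_{k\notin K}x_k=0)$ holds in $\vfield^*$'' does not follow. Realizing $p$ gives a point $x^*\in\salg^*$ with support $K$ such that $|\val(x^*_k)-y_k|<\epsilon$ for \emph{every} positive $\epsilon\in\vgroup$, i.e.\ $\val(x^*_k)-y_k$ lies in the convex subgroup of elements of $\vgroup^*$ infinitesimal over $\vgroup$. Since $\vgroup^*$ is $|\vfield|^+$-saturated over $\vgroup$, this convex subgroup is nontrivial, so there is no reason to have $\val(x^*_k)=y_k$. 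Consequently the sentence with the exact equality $\val(x_k)=y_k$ is not witnessed by $x^*$, and you have no point of $S_K$ at $y_K$ to transfer back. Put differently: letting $A\subset\vgroup^K$ be the (definable) stratum $S_K$, what you have shown is that $A^*$ contains a point infinitesimally close to $y_K$ over $\vgroup$; but this does not force $y_K\in A$ unless $A$ is already known to be closed --- which is exactly the claim. (Take $A=(0,\infty)\subset\vgroup$ and $y=0$: the saturated extension has positive infinitesimals in $A^*$, yet $0\notin A$.) So the obstacle you flag in the second claim --- infinitesimals of $\vfield^*$ escaping control by $\vfield$---is already fatal for the first. Bridging it seems to require precisely the kind of external input the paper supplies (the polyhedral/finite-union-of-closed-polyhedra structure over $\puiseux$ from \cref{lemma:images_are_closed_one_stratum}, or some substitute for it), rather than more saturation.
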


\begin{remark}\label[remark]{remark:alessandrini}
  \Cref{theorem:image_finite_union_of_ri_polyh,theorem:images_are_closed} should be compared with Theorem~3.11 and Theorem~4.10 of Alessandrini~\cite{alessandrini},
  dealing with log-limits of sets that are definable in certain o-minimal
  polynomially bounded structures.
  The fact that we obtain
  the present results
   using quantifier elimination has two advantages. First, it gives a constructive proof, while the proofs in~\cite{alessandrini} rely on compactness arguments. Second, our methods apply to arbitrary real closed valued fields, while the analysis of \cite{alessandrini} is restricted to fields
  whose value group is a subgroup of $\R$.
  On the other hand, the results of \cite{alessandrini} allow to go beyond the semialgebraic setting; they apply for instance to the o-minimal structure of real numbers with restricted analytic functions.
\end{remark}

\begin{remark}
We point out that every subset of $(\vgroup\cup\{-\infty\})^n$ that is closed in the product topology of the order topology of $\vgroup \cup \{-\infty \}$ has closed strata, but the converse is not true. For example, the set $\R^{2} \cup \{-\infty \}$ has closed strata, but is not closed in the product topology of the order topology of $\trop^{2}$. 
\end{remark}

To prove \cref{theorem:images_are_closed}, we first consider the case of Puiseux series, $\vfield = \puiseux$. 
The proof needs a few auxiliary lemmas. Hereafter, $\pospuiseux^{n} \coloneqq \{\bo x \in \puiseux \colon \forall k, \bo x_{k} > 0 \}$ denotes the open positive orthant of $\puiseux^{n}$. Let us fix a basic semialgebraic set $\bsalg \subset \pospuiseux^{n}$ defined as
\begin{equation}
\bsalg \coloneqq \{\bo x \in \pospuiseux^{n} \colon \forall i = 1, \dots p, \bo P_{i}(\bo x) > 0  \land \forall i = p+1, \dots, q, \bo Q_{i}(\bo x) = 0 \} \, \label[equation]{eq:basic_semialgebraic}
\end{equation}
for some polynomials $\bo P_{1}, \dots, \bo P_{p}, \bo Q_{p+1}, \dots, \bo Q_{q} \in \puiseux[X_{1}, \dots, X_{n}]$. Equivalently, we put $\bsalg$ under the form
\begin{equation}
\bsalg = \{\bo x \in \pospuiseux^{n} \colon \forall i = 1, \dots p, \bo P_{i}(\bo x) > 0  \land \forall i = p+1, \dots, q, \bo P_{i}(\bo x) \ge 0 \} \, , 
\label[equation]{eq:basic_semialgebraic_ineq}
\end{equation}
where we set $\bo P_{i} \coloneqq - \bo Q^{2}_{i}$ for all $i = p+1, \dots, q$. Denote $P_{i} \coloneqq \tropPol(\bo P_{i})$ for all $i = 1, \dots, q$. In the next lemma, we highlight a property of the full-dimensional cells of the complex $\complex(P_{1}, \dots, P_{q})$ whose interior is contained in $\val(\bsalg)$.
\begin{lemma}\label[lemma]{lemma:formal_partition_of_space}
Suppose that $\polyh$ is a full-dimensional cell of $\complex(P_{1}, \dots, P_{q})$ such that $\inter(\polyh) \cap \val(\bsalg) \neq \emptyset$. Let $w \in \inter(\polyh)$, and $\bo w \in \val^{-1}(w)\cap \pospuiseux^{n}$ be an arbitrary lift. Then $\bo w \in \bsalg$.
\end{lemma}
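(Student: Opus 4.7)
The plan is to show that, under the full-dimensionality assumption on $\polyh$, the sign of $\bo P_i(\bo x)$ is invariant across all $\pospuiseux^n$-lifts $\bo x$ of all interior points of $\polyh$; the conclusion $\bo w\in \bsalg$ will then follow immediately from the existence of a single lift in $\bsalg$. The first ingredient is that any full-dimensional cell $\polyh$ of the refinement $\complex(P_1,\dots,P_q)$ writes as $\polyh = \bigcap_{i=1}^q \polyh_i$ with each $\polyh_i \in \complex(P_i)$ full-dimensional, and moreover $\inter(\polyh) \subset \inter(\polyh_i)$ for every $i$. Otherwise, a neighborhood of an interior point of $\polyh$ lying on the boundary of some $\polyh_i$ would meet the complement of $\polyh_i$, contradicting $\polyh \subset \polyh_i$. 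In particular, for every $w \in \inter(\polyh)$, the set $\domin(P_i,w)$ is a singleton $\{\alpha_i^*\}$ depending only on $i$ (and $\polyh$).

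Next, I would lift this uniqueness to the Puiseux series level. Since $\bo x\in \pospuiseux^n$ satisfies $\val(\bo x)=w$ and $\bo x^\alpha > 0$, one has $\val(\bo a_\alpha \bo x^\alpha) = |a_\alpha| + \langle \alpha, w\rangle$ for every $\alpha\in \psupport(P_i)$, the maximum over $\alpha$ being attained uniquely at $\alpha_i^*$. By the equality case in~\cref{e-val} (leading terms with strictly dominating valuation cannot cancel), the sum $\bo P_i(\bo x) = \sum_\alpha \bo a_\alpha \bo x^\alpha$ has finite valuation $|a_{\alpha_i^*}| + \langle \alpha_i^*, w\rangle$ and leading coefficient $\lc(\bo a_{\alpha_i^*})\cdot \lc(\bo x^{\alpha_i^*})$. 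Since $\lc(\bo x^{\alpha_i^*}) > 0$ by positivity of $\bo x$, we conclude that $\sign(\bo P_i(\bo x)) = \sign(\bo a_{\alpha_i^*})$, a quantity depending only on $\polyh$ and $i$, not on the choice of $w \in \inter(\polyh)$ nor on the chosen lift.

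Finally, I would apply this invariance to the lift $\bo w^* \in \bsalg$ of some $w^* \in \inter(\polyh) \cap \val(\bsalg)$ provided by the hypothesis. For $i \in [p]$, the strict constraint $\bo P_i(\bo w^*) > 0$ forces $\sign(\bo a_{\alpha_i^*}) = +1$; for $p < i \leq q$, the constraint $\bo P_i(\bo w^*) \geq 0$ combined with $\bo P_i(\bo w^*) \neq 0$ (its valuation is finite) yields the same conclusion. Thus $\bo P_i(\bo w) > 0$ for every $i \in [q]$, which implies $\bo w \in \bsalg$ in view of~\cref{eq:basic_semialgebraic_ineq}. The only delicate point is the full-dimensionality inheritance in the first paragraph; once that is established, the remaining argument is a direct consequence of the multiplicativity of the valuation and the non-cancellation of leading terms in Puiseux sums.
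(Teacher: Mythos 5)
Your proof is correct and takes essentially the same approach as the paper's: both hinge on the fact that a full-dimensional cell of the refinement is an intersection of full-dimensional cells of the factors, so that $\domin(P_i,\cdot)$ is a constant singleton on $\inter(\polyh)$, and both then transfer the sign information from the one lift known to lie in $\bsalg$ to all lifts of interior points. The only cosmetic difference is that you argue directly via leading coefficients and the invariance of $\sign(\bo P_i(\bo x))$, whereas the paper phrases the same step through the strict inequality $P_i^+ > P_i^-$ on $\inter(\polyh)$ and Lemma~\ref{lemma:homomorphism}.
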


\begin{proof}
Take a point $\bo z \in \bsalg$ such that $z \coloneqq \val(\bo z) \in \inter(\polyh)$. For every $i = 1, \dots, q$ we have $\bo P_{i}(\bo z) \ge 0$. By \cref{lemma:homomorphism} and the fact $\val$ is order preserving, we obtain $P_{i}^{+}(z) \ge P_{i}^{-}(z)$. Since $\polyh$ is a full-dimensional cell of $\complex(P_{1}, \dots, P_{q})$, we have the equality $\inter(\polyh) = \cap_{i = 1}^{q} \inter(\polyh_{i})$, where, for every $i$, $\polyh_{i}$ is a full-dimensional cell of $\complex(P_{i})$. In particular, $\domin(P_{i}, z)$ has only one element and we have $P_{i}^{+}(z) > P_{i}^{-}(z)$. Furthermore, we have $\domin(P_{i}, z) = \domin(P_{i}, w)$ for any point $w \in \inter(\polyh)$. This implies that $P_{i}^{+}(w) > P_{i}^{-}(w)$. Therefore, if $\bo w \in \val^{-1}(w) \cap \pospuiseux^{n}$ is an arbitrary lift of $w$, then by \cref{lemma:homomorphism} we have $\val(\bo P_{i}^{+}(\bo w)) > \val(\bo P_{i}^{-}(\bo w))$ and hence $\bo w \in \bsalg$.
\end{proof}

\begin{lemma}\label[lemma]{lemma:linear_transformation}
Let $A \in \Q^{m \times n}$ be any matrix. Define a function $f \colon \pospuiseux^{n} \to \pospuiseux^{m}$ as
\[
f(\bo x)_{i} \coloneqq \bo x_{1}^{A_{i1}} \bo x_{2}^{A_{i2}} \dots \bo x_{n}^{A_{in}} \, .
\]
Let $\bsalg \subset \pospuiseux^{n}$ be any semialgebraic set. Then $f(\bsalg) \subset \pospuiseux^{m}$ is semialgebraic and we have $\val(f(\bsalg)) = A\bigl(\val(\bsalg)\bigr)$.
\end{lemma}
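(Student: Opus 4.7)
The lemma has two claims to verify, and I would handle them in turn.

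For the valuation identity, the plan is to reduce to the pointwise statement $\val(f(\bo x)) = A\,\val(\bo x)$ for every $\bo x \in \pospuiseux^n$. Since each coordinate $\bo x_k$ is positive, $\bo x_k^{A_{ik}}$ is unambiguously defined (using that $\puiseux$ is real closed, so positive elements admit $q$-th roots). The multiplicativity property~\cref{e-val2} gives $\val(\bo x_k^{A_{ik}}) = A_{ik}\val(\bo x_k)$ for any rational exponent: one writes $A_{ik} = p/q$, uses $\val(\bo x_k^{p/q})^q = p\,\val(\bo x_k)$, and divides. Applying multiplicativity again coordinate-wise yields $\val(f(\bo x)_i) = \sum_k A_{ik}\val(\bo x_k) = (A\,\val(\bo x))_i$, and therefore $\val(f(\bsalg)) = A(\val(\bsalg))$ follows immediately.

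For semialgebraicity of $f(\bsalg)$, the plan is to rewrite the graph of $f$ using polynomial equations and then invoke Tarski--Seidenberg. Choose a positive common denominator $q$ so that $A_{ik} = p_{ik}/q$ with $p_{ik} \in \Z$, and split $p_{ik} = p_{ik}^+ - p_{ik}^-$ into its nonnegative and nonpositive parts. Then the relation $\bo y_i = \prod_k \bo x_k^{A_{ik}}$ on $\pospuiseux^n \times \pospuiseux^m$ is equivalent to the polynomial equation $\bo y_i^q \prod_k \bo x_k^{p_{ik}^-} = \prod_k \bo x_k^{p_{ik}^+}$ (the $q$-th root in the positive orthant being unique). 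Consequently,
\[
f(\bsalg) = \Bigl\{\bo y \in \pospuiseux^m : \exists \bo x \in \bsalg,\ \forall i,\ \bo y_i^q \prod_k \bo x_k^{p_{ik}^-} = \prod_k \bo x_k^{p_{ik}^+}\Bigr\}
\]
is the projection onto the $\bo y$-coordinates of a semialgebraic set in $\pospuiseux^{n+m}$. Quantifier elimination for real closed fields (applied to $\puiseux$) then shows that $f(\bsalg)$ is semialgebraic.

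Neither part presents a real obstacle; the only subtlety is ensuring that the rational powers are handled cleanly, which is taken care of by working inside the positive orthant and clearing denominators via the $q$-th power trick before applying Tarski--Seidenberg.
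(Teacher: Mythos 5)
Your argument is correct and follows essentially the same route as the paper: the paper disposes of the first claim by citing that semialgebraic sets are closed under semialgebraic maps (Basu--Pollack--Roy, Prop.~2.83), which your clearing-of-denominators plus Tarski--Seidenberg argument simply makes explicit, and for the second claim both you and the paper reduce to the pointwise identity $\val(f(\bo x)_i) = A_i(\val(\bo x))$. One small typo in your writeup: the relation you want is $q\cdot\val(\bo x_k^{p/q}) = p\cdot\val(\bo x_k)$, not $\val(\bo x_k^{p/q})^q = p\,\val(\bo x_k)$ (the valuation lives in an additively written ordered group, so the $q$ must be a scalar multiple, not an exponent).
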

\begin{proof}
The first claim follows from the fact that the class of semialgebraic sets is closed under semialgebraic transformations \cite[Proposition~2.83]{basu_pollack_roy_algorithms}. The second claim follows from the identity $\val(f(\bo x)_{i}) = A_{i}\bigl( \val(\bo x) \bigr)$.
\end{proof}
\begin{lemma}\label[lemma]{lemma:images_are_closed_one_stratum}
Suppose that $\bsalg \subset \pospuiseux^{n}$ is a semialgebraic set. Then $\val(\bsalg) \subset \R^{n}$ is a union of finitely many closed polyhedra.
\end{lemma}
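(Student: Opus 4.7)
The plan is to combine \cref{theorem:image_finite_union_of_ri_polyh} with a closedness argument: once $\val(\bsalg)$ is known to be semilinear and closed, decomposing $\R^n$ along the hyperplanes involved in the linear inequalities defining it and keeping only the cells contained in $\val(\bsalg)$ exhibits it as a finite union of closed polyhedra. I would first reduce to the case where $\bsalg$ is basic semialgebraic, in the form of~\cref{eq:basic_semialgebraic_ineq} with polynomials $\bo P_1, \ldots, \bo P_q$. Applying \cref{theorem:image_finite_union_of_ri_polyh} to $\bsalg \subset \pospuiseux^n$, the image $\val(\bsalg) \subset \R^n$ consists of a single stratum and is therefore semilinear. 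It remains to prove it is closed.

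Set $P_i \coloneqq \tropPol(\bo P_i)$ and consider the polyhedral complex $\complex(P_1, \ldots, P_q)$, whose cells partition $\R^n$ into relative interiors. The key observation---a generalization of \cref{lemma:formal_partition_of_space} from full-dimensional cells to cells of arbitrary dimension---is that membership $w \in \val(\bsalg)$ depends only on the tuple $\bigl(\domin(P_i, w)\bigr)_i$, which is constant on each relative interior. Hence $\val(\bsalg) = \bigsqcup_{\polyhII \in \mathcal{G}'} \ri(\polyhII)$, where $\mathcal{G}'$ is the finite subfamily of cells whose relative interior lies entirely in $\val(\bsalg)$. Closedness of $\val(\bsalg)$ is then equivalent to $\mathcal{G}'$ being stable under taking faces.

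To establish this face-closure property, fix $\polyhII \in \mathcal{G}'$, a face $\polyhII' \subset \polyhII$, and a target $w' \in \ri(\polyhII')$. Pick a lift $\bo w \in \bsalg$ with $\val(\bo w) \in \ri(\polyhII)$, and construct a candidate lift for $w'$ by the monomial rescaling $\bo w'_k \coloneqq t^{w'_k - \val(\bo w_k)} \bo w_k$, so that $\val(\bo w') = w'$. Verifying $\bo w' \in \bsalg$ reduces to the positivity of the leading coefficient of each $\bo P_i(\bo w')$, which is expressible as a sum indexed by $\domin(P_i, \polyhII') \supseteq \domin(P_i, \polyhII)$. When this leading sum is positive, the construction succeeds; when it degenerates, I would adjust the leading coefficients $\lc(\bo w)$ generically or correct via lower-order perturbations of the lift, possibly using \cref{lemma:linear_transformation} to perform a convenient monomial substitution that simplifies the supports.

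The main obstacle is precisely this face-closure step: multi-indices in $\domin(P_i, \polyhII') \setminus \domin(P_i, \polyhII)$ may come from $\psupport^-(P_i)$ and contribute negatively, threatening the positivity needed for the lift. Showing that the freedom in choosing $\bo w$ (both its leading coefficients and its lower-order terms) always suffices to avoid such cancellations is the crux of the argument, and requires a careful parametrization of the Puiseux coefficients in tandem with the combinatorics of the face relations in the complex.
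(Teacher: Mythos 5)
Your proposal correctly reduces to the basic semialgebraic case and invokes \cref{theorem:image_finite_union_of_ri_polyh} for semilinearity, but you then stop short precisely where the difficulty lies. The ``key observation'' you state---that membership $w\in\val(\bsalg)$ depends only on the tuple $\bigl(\domin(P_i,w)\bigr)_i$, i.e.\ that one can extend \cref{lemma:formal_partition_of_space} from full-dimensional cells to cells of arbitrary dimension---is exactly what needs proof, not an observation. For a full-dimensional cell there is a unique maximizing monomial in each $P_i$, so no cancellation of leading terms can occur in $\bo P_i(\bo w)$ for \emph{any} lift $\bo w$; this is what makes \cref{lemma:formal_partition_of_space} immediate. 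On a lower-dimensional face $\polyhII'$ the set $\domin(P_i,\polyhII')$ has several elements with possibly mixed signs, and then the sign of $\bo P_i(\bo w')$ depends on the actual Puiseux coefficients of the lift, not just on $\val(\bo w')$. Your plan to ``adjust leading coefficients generically or correct via lower-order perturbations'' is a gesture at this, but you give no argument that such adjustments always exist, let alone one that controls every constraint $i=1,\dots,q$ simultaneously while also fixing the valuation. This is not a technicality to be filled in: it is the entire content of the lemma.

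The paper side-steps this obstacle altogether. It does not try to build a lift of a boundary point directly; instead it proceeds by induction on the ambient dimension $n$. The base case $n=1$ is elementary (images of intervals). In the inductive step, given $w^*\in\cl(\val(\bsalg))$, one distinguishes two cases. If $w^*$ lies in some full-dimensional piece $\polyh_{ij}$ of the refined decomposition, one slices by an affine hyperplane with rational normal through $w^*$ meeting $\inter(\polyh_{ij})$; full-dimensionality lets you invoke \cref{lemma:formal_partition_of_space} to produce lifts of nearby interior points lying \emph{on the slice}, and then projecting away one coordinate reduces to dimension $n-1$ where the induction hypothesis gives closedness. If $w^*$ meets only lower-dimensional pieces, one chooses a rational direction $v$ transverse to the finitely many hyperplanes carrying those pieces, restricts to a small box $\bo U$ around $w^*$, pushes forward through a monomial map whose kernel is $\vecspan(v)$ via \cref{lemma:linear_transformation}, and again concludes by the induction hypothesis. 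In both cases the induction deals with the cancellation issue by never constructing a lift of a boundary point directly; it recovers $w^*\in\val(\bsalg)$ from the closedness of an $(n-1)$-dimensional projection. Your approach would be a genuinely different proof if the face-closure step could be completed, but as written it is not a proof.
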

\begin{proof}
We proceed by induction over the dimension $n$.
First, suppose that $n = 1$. Since $\puiseux$ is a real closed field, 
every semialgebraic set in $\puiseux$ is a finite union of points and open intervals.
Observe that the image by the valuation of an open interval 
in $\pospuiseux$ is an interval that is closed in $\R$. 
Therefore, the claim is true for $n = 1$.

Second, suppose that the claim holds in dimension $n - 1$. Observe that it is enough to prove the claim for basic semialgebraic sets. Fix a basic semialgebraic set $\bsalg \subset \pospuiseux^{n}$ as in \cref{eq:basic_semialgebraic_ineq} and take the polyhedral complex $\complex \coloneqq \complex(P_{1}, \dots, P_{q})$. Let $\tilde{\polyh}_{1}, \dots, \tilde{\polyh}_{r}$ denote the cells of $\complex$. 
By \cref{theorem:image_finite_union_of_ri_polyh}, $\val(\bsalg)$ is a finite union of relatively open polyhedra. Denote these polyhedra by $\ri(\tilde{\face}_{1}), \dots, \ri(\tilde{\face}_{s})$, where each $\tilde{\face_{j}}$ is a closed polyhedron and $\ri$ denotes the relative interior. 
For every $(i,j)$, let $\polyh_{ij}$ be a polyhedron such that
\[
\ri(\polyh_{ij}) = \ri(\tilde{\polyh}_{i}) \cap \ri(\tilde{\face}_{j}) \, .
\]
Observe that $\val(\bsalg)$ is a union of $\ri(\polyh_{ij})$. 
We consider an element $w^*$ of $ \cl(\val(\bsalg))$. Let us look at two cases.

\begin{asparaenum}[{Case} I:]
\item There is a full-dimensional polyhedron $\polyh_{ij}$ such that $w^{*} \in \polyh_{ij}$. In this case, let $\hplane = \{w \in \R^{n} \colon \langle a, w \rangle = \langle a, w^* \rangle \}$ be any hyperplane intersecting the interior of $\polyh_{ij}$, and such that $a \in \Q^n$. Consider $w^{(1)}, w^{(2)}, \dots$ a sequence such that $w^{(h)} \in \hplane \cap \inter(\polyh_{ij})$ for all $h$ and $w^{(h)} \to w^{*}$. Take the set $\bo Y \subset \pospuiseux^{n}$ defined as
\[
\bo Y = \bsalg \cap \Bigl\{\bo x \in \pospuiseux^{n} \colon \prod_{k \in [n]} \bo x_{k}^{a_{k}} = t^{\langle a, w^* \rangle} \Bigr\} \, .
\]
For every $h$ define $\bo w^{(h)} \in \val^{-1}(w^{(h)}) \cap \pospuiseux^{n}$ as $\bo w^{(h)}_{k} = t^{w^{(h)}_{k}}$ for all $k \in [n]$. Note that every $w^{(h)}$ belongs to the interior of the full-dimensional polyhedron $\tilde{\polyh}_i$. Consequently, $\bo w^{(h)}$ belongs to $\bo Y$ by \cref{lemma:formal_partition_of_space}. Take $l \in [n]$ such that $a_{l} \neq 0$ and let $\bo \pi \colon \pospuiseux^{n} \to \pospuiseux^{n-1}$ denote the projection that forgets the $l$-th coordinate. 
Similarly, let $\pi \colon \R^{n} \to \R^{n-1}$ denote the projection that forgets the $l$-th coordinate. By the induction hypothesis and \cref{lemma:linear_transformation}, $\val(\bo \pi(\bo Y))$ is a closed subset of $\R^{n-1}$ and we have $\val(\bo \pi (\bo Y)) = \pi(\val(\bo Y))$. The sequence $\pi(w^{(h)})$ converges to $\pi(w^{*})$. Therefore, we have $\pi(w^{*}) \in \pi(\val(\bo Y))$. In other words, there exists a point $\bo w^{*} \in \bo Y$ such that $\pi(\val(\bo w^{*})) = \pi(w^{*})$. Moreover, we have $\val(\bo w^{*}) \in \hplane$ and $w^{*} \in \hplane$. Since $a_{l} \neq 0$, this implies that $\val(\bo w^{*}) = w^{*}$. Therefore $w^{*} \in \val(\bspectra)$.

\item If $w^{*}$ does not belong to any full-dimensional polyhedron $\polyh_{ij}$, then we denote by $I$ the set of all indices $(i,j)$ such that $\polyh_{ij}$ contains $w^{*}$. We can take $\rho > 0$ so small that the closed Chebyshev ball $B(w^{*}, \rho)$ does not intersect any polyhedron $\polyh_{ij}$ with $(i,j) \notin I$. Let $w^{(1)}, w^{(2)}, \dots$ be a convergent sequence of elements of $\R^{n}$, $w^{(h)} \to w^{*}$ such that $w^{(h)} \in \val(\bsalg)$ for all $h$. 
Every polyhedron $\polyh_{ij}$ such that $(i,j) \in I$ is not full-dimensional. Therefore, it is included in an affine hyperplane $\hplane_{ij}$. Let $X = \bigcup_{(i,j) \in I} \hplane_{ij}$ be a union of these hyperplanes. Observe that we have $w^{*} \in X$ and that $\val(\bsalg) \cap B(w^{*}, \rho) \subset X$. Let $v \in \Q^{n}$ be any rational vector such that $v \notin (X - w^{*})$. (Here, by $X- w^{*}$ we mean the translation of $X$ by vector $-w^{*}$.) Note that the affine line $w^{*} + \vecspan(v)$ intersects $X$ only in $w^{*}$. 

Let $A \in \Q^{(n-1) \times n}$ be a rational matrix such that $\ker(A) = \vecspan(v)$. Take the function $f \colon \pospuiseux^{n} \to \pospuiseux^{n-1}$ defined as
\[
(f(\bo x))_{k} \coloneqq \prod_{l = 1}^{n} \bo x_{l}^{A_{kl}} \, , \ k = 1,2,\dots, n-1 \, .
\]
Let $\bo U \coloneqq \{\bo x \in \pospuiseux^{n} \colon \forall l, \bo x_{l} \in [t^{w^{*}_{l} - \rho}, t^{w^{*}_{l} + \rho}] \}$. By \cref{lemma:linear_transformation}, the set $f(\bsalg \cap \bo U) \subset \pospuiseux^{n-1}$ is semialgebraic and we have $\val(f(\bsalg \cap \bo U)) = A(\val(\bsalg \cap \bo U))$. Therefore, by the induction hypothesis, the set $A(\val(\bsalg \cap \bo U))$ is closed.  For every $w^{(h)}$, let $\bo w^{(h)} \in \bsalg$ denote 
any element of $\bsalg$ such that $\val(\bo w^{(h)}) = w^{(h)}$. For $h$ large enough we have $w^{(h)} \in B(w^{*}, \rho/2)$ and hence $\bo w^{(h)} \in \bsalg \cap \bo U$. Moreover, the sequence $Aw^{(h)}$ converges to $Aw^{*}$. Since $A(\val(\bsalg \cap \bo U))$ is closed, there is $\bo w^{*} \in \bsalg \cap \bo U$ such that $Aw^{*} = A \val(\bo w^{*})$. As $\bo w^{*} \in \bo U$, we have $\val(\bo w^{*}) \in B(w^{*}, \rho)$. Therefore 
\[
\val(\bo w^{*}) \in  \Bigl( (w^{*} + \vecspan(v)) \cap B(w^{*}, \rho) \cap \val(\bsalg) \Bigr)\, .
\]
On the other hand, we have $\val(\bsalg) \cap B(w^{*}, \rho) \subset X$ and $(w^{*} + \vecspan(v)) \cap X = w^{*}$. Hence $\val(\bo w^{*}) = w^{*}$ and $w^{*} \in \val(\bsalg)$.
\end{asparaenum}
 \end{proof}
\begin{lemma}\label[lemma]{point_with_zeros}
Suppose that $\bsalg \subset \puiseux^{n}$ is a nonempty bounded closed semialgebraic set. Let $K \subset [n]$ be a set of indices such that for every $\bo a \in \pospuiseux$ the set $\{\bo x \in \bsalg \colon \forall k \in K, \,  \bo x_{k} \in [0, \bo a] \}$ is nonempty. Then there exists a point $\bo y \in \bsalg$ such that $\bo y_{k} = 0$ for all $k \in K$.
 \end{lemma}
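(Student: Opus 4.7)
The plan is to reduce the lemma to an application of the extreme value theorem for closed bounded semialgebraic sets over a real closed field. This theorem (i.e., the statement that every continuous semialgebraic function attains its extrema on any such set) is not as immediate over $\puiseux$ as it is over $\R$, because $\puiseux$ is not locally compact in the order topology. However, it transfers from $\R$ by Tarski--Seidenberg: for a fixed defining formula, the properties ``$\bsalg$ is closed,'' ``$\bsalg$ is bounded,'' and ``a polynomial attains its infimum on $\bsalg$'' can be expressed by first-order $\lorings$-formulas in the parameters (for closedness, one states that the complement is open), and the combined implication holds over $\R$ by Heine--Borel, hence over $\puiseux$.

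The proof itself would then go as follows. First, I would introduce the closed bounded semialgebraic set
\[
\bsalg^{+} \coloneqq \{\bo x \in \bsalg \colon \forall k \in K, \ \bo x_{k} \geq 0 \} \, .
\]
The assumption immediately implies $\bsalg^{+}$ is nonempty (any witness $\bo x$ for $\bo a$ lies in $\bsalg^{+}$). Next, I would introduce the polynomial $\bo f(\bo x) \coloneqq \sum_{k \in K} \bo x_{k}$. By the transferred extreme value theorem, $\bo f$ attains its infimum $\bo m$ on $\bsalg^{+}$, at some point $\bo y \in \bsalg^{+}$.

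The conclusion then follows from two opposite bounds on $\bo m$. On the one hand, $\bo m \geq 0$ since each summand is nonnegative on $\bsalg^{+}$. On the other hand, for any $\bo a \in \pospuiseux$, the assumption provides a point $\bo x \in \bsalg^{+}$ with $\bo x_{k} \leq \bo a$ for all $k \in K$, and therefore $\bo m \leq \bo f(\bo x) \leq |K|\,\bo a$. If $\bo m$ were strictly positive, taking $\bo a \coloneqq \bo m / (2|K|) \in \pospuiseux$ would yield the contradiction $\bo m \leq \bo m/2$. Hence $\bo m = 0$, and since $\bo y_k \geq 0$ for every $k \in K$ and their sum is zero, we get $\bo y_{k} = 0$ for all $k \in K$.

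The only genuinely delicate point of the argument is the preliminary transfer statement justifying that continuous semialgebraic functions attain their extrema on closed bounded semialgebraic subsets of $\puiseux^{n}$; once this is in place, the rest is a short manipulation exploiting that $|K|\,\bo a$ ranges over all of $\pospuiseux$ as $\bo a$ does.
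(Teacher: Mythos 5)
Your proof is correct and uses the same underlying model-theoretic principle as the paper (transfer from $\R$ via Tarski's completeness of RCF), but packages it differently. The paper expresses the \emph{entire lemma statement} as a single $\lorings$-sentence in which the coefficients of the defining formula are universally quantified, checks it in $\R$ (where it reduces to a compactness argument), and invokes completeness of RCF to conclude it in $\puiseux$; no intermediate lemma is required. You instead transfer a more general off-the-shelf fact---the extreme value theorem for a fixed polynomial on closed bounded semialgebraic sets---and then finish by an elementary ordered-field argument about the infimum of $\bo f(\bo x) = \sum_{k \in K} \bo x_{k}$ on $\bsalg^{+}$. The transfer step you describe is sound: for a fixed defining formula $\psi$ and the fixed linear polynomial $\bo f$ (integer coefficients), the implication ``nonempty $\wedge$ closed $\wedge$ bounded $\Rightarrow$ $\bo f$ attains its minimum'' is a first-order $\lorings$-sentence, true in $\R$ by Heine--Borel, hence true in every real closed field. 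The downstream argument (each summand nonnegative forces $\bo m \geq 0$; the hypothesis forces $\bo m \leq |K|\,\bo a$ for all $\bo a \in \pospuiseux$, and $\bo a = \bo m/(2|K|)$ gives a contradiction if $\bo m > 0$) is a clean ordered-field manipulation and closes the proof. The trade-off is essentially one of modularity: the paper's proof is shorter because it transfers exactly the statement needed and has no residual content; yours isolates a reusable intermediate fact at the cost of having to justify it, but makes the combinatorial core of the lemma (why a small infimum must actually be zero) visible rather than hidden inside the transferred sentence.
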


\begin{proof}
We prove that the statement holds for any real closed field $\rcfield$. Fix an $\lorings$-formula $\psi(x_{1}, \dots, x_{n+m})$. For every vector $\overbar{b} \in \rcfield^{m}$ we define the semialgebraic set $\salg_{\overbar{b}}$ by
\[
\salg_{\overbar{b}} \coloneqq \{x \in \rcfield^{n} \colon \rcfield \models \psi(x_{1}, \dots, x_{n}, \overbar{b}) \} \, .
\]
The statement ``for all $(x_{n+1}, \dots, x_{n+m})$, if the set $\salg_{(x_{n+1}, \dots, x_{n+m})}$ is nonempty, bounded, closed, and the set $\{x \in \salg_{(x_{n+1}, \dots, x_{n+m})} \colon \forall k \in K, \,  x_{k} \in [0, a] \}$ is nonempty for every $a > 0$, then there exists a point $y \in \salg_{(x_{n+1}, \dots, x_{n+m})}$ such that $y_{k} = 0$ for all $k \in K$'' is a sentence in the language of ordered rings $\lorings$. It is true in $\R$, hence it is true in $\rcfield$ by the model completeness of real closed fields (\cite[Corollary~3.3.16]{marker_model_theory}).
 \end{proof}
The lemmas above lead to the main theorem of this section.

\begin{proof}[Proof of~\cref{theorem:images_are_closed}]
We first prove the result for a semialgebraic set $\bsalg$ included in the closed positive orthant $\nnpuiseux^{n}$. Let $K \subset [n]$ be any nonempty subset and let $\bo X_{K} \subset \puiseux^{n}$ be the set defined as
\[
\bo X_{K} \coloneqq \{ \bo x \in \puiseux^{n} \colon \bo x_{k} \neq 0 \iff k \in K \}.
\]
The sets $\bo X_{K}$ and subsequently $\bsalg \cap \bo X_{K}$ are semialgebraic. Let $\bo \pi \colon \puiseux^{n} \to \puiseux^{K}$ denote the projection on the coordinates from $K$. Similarly, let $\pi \colon \trop^{n} \to \trop^{K}$ denote the projection on the coordinates from $K$. Observe that the stratum of $\val(\bsalg)$ associated with $K$ is equal to $\pi(\val(\bsalg \cap \bo X_{K})) = \val(\bo \pi(\bsalg \cap \bo X_{K}))$. Moreover, the set $\bo \pi(\bsalg \cap \bo X_{K})$ is included in $\pospuiseux^{K}$. Therefore the first claim follows from \cref{lemma:images_are_closed_one_stratum}. 

To prove the second claim, suppose that $\bsalg$ is closed. Let $x \in \trop^{n}$ be any point that does not belong to $\val(\bsalg)$. Let $K \subset [n]$ be the support of $x$. For any $M, N > 0$ we denote $I_{k}(M,N) = [-\infty, -M[$ if $k \notin K$ and $I_{k}(M,N) ={]x_{k} - \frac{1}{N}, x_{k} + \frac{1}{N}[}$ otherwise. Similarly, we denote $\bo I_{k}(M,N) = [0, t^{-M + 1}] \subset \nnpuiseux$ for $k \notin K$ and $\bo I_{k}(M,N) = [t^{x_{k} - \frac{2}{N}}, t^{x_{k} + \frac{2}{N}}] \subset \pospuiseux$ otherwise. We want to show that there is an open neighborhood of $x$ that does not belong to $\val(\bsalg)$. Suppose that this is not the case. Then, for any $M,N > 0$, the set $\prod_{k = 1}^{n} I_{k}(M,N)$ contains a point from $\val(\bsalg)$. Therefore, the set $\bsalg^{(M, N)} \coloneqq \bsalg \cap \prod_{k = 1}^n \bo I_k(M,N)$ is nonempty, on top of being closed and bounded. If we fix $N > 0$, then, by \cref{point_with_zeros}, there is a point $\bo y^{(N)} \in \bsalg^{(1,N)}$ such that $\bo y^{(N)}_{k} = 0$ for all $k \notin K$. In other words, the set $\bo \pi(\bsalg \cap \bo X_{K})$ contains a point that belongs to $\prod_{k \in K} [t^{x_{k} - \frac{2}{N}}, t^{x_{k} + \frac{2}{N}}]$. Hence, the stratum of $\val(\bsalg)$ associated with $K$ contains a point that belongs to $\prod_{k \in K} [x_{k} - \frac{2}{N}, x_{k} + \frac{2}{N}]$. Since this is true for all $N > 0$, and the strata of $\val(\bsalg)$ are closed, we have $x \in \val(\bsalg)$, which gives a contradiction.
 
Second, suppose that $\bsalg \subset \puiseux^{n}$ is any semialgebraic set. Given a vector $\delta \in \{+1, -1\}^{n}$, we denote by $f_\delta$ the involution which maps $\bo x \in \puiseux^n$ to the vector with entries $\delta_k \bo x_k$. With this notation, $\bsalg$ is the union of the sets of the form $\bsalg \cap f_{\delta}(\nnpuiseux^n)$. Moreover, the set $f_\delta(\bsalg \cap f_\delta(\nnpuiseux^n))$ is a semialgebraic set included in $\nnpuiseux^{n}$, and its image under the valuation map coincides with that of $\bsalg \cap f_\delta(\nnpuiseux^n)$. The claim follows by applying the result of the previous paragraph to each of the sets $f_\delta(\bsalg \cap f_\delta(\nnpuiseux^n))$.

To prove the claim for an arbitrary field $\vfield$ we use \cref{theorem:qe_vrcf}. We fix an $\lorings$-formula $\psi(x_{1}, \dots, x_{n+m})$. For every vector $\overbar{b} \in \vfield^{m}$ we can look at the semialgebraic set
\[
\salg_{\overbar{b}} \coloneqq \{x \in \vfield^{n} \colon \vfield \models \psi(x_{1}, \dots, x_{n}, \overbar{b}) \} \, .
\]
The statement ``for all $(x_{n+1}, \dots, x_{n+m})$, the image by valuation of the set $\salg_{(x_{n+1}, \dots x_{n+m})}$ has closed strata'' is a sentence in $\lvrcf$. It is true in $\puiseux$ and hence, by  the completeness result of \cref{theorem:qe_vrcf}, it is also true in $\vfield$. The same is true for the statement ``for all $(x_{n+1}, \dots, x_{n+m})$, if $\salg_{(x_{n+1}, \dots x_{n+m})}$ is closed, then its image by valuation is closed.''
\end{proof}

As a by-product, we get the following result, which generalizes the proposition of Develin and Yu \cite[Proposition~2.9]{develin_yu} on polyhedra to basic semialgebraic sets.
\begin{corollary}\label[corollary]{corollary:purity_commutation}
Suppose that $\bsalg \subset \pospuiseux^{n}$ is a semialgebraic set defined as
\[
\bsalg \coloneqq \{\bo x \in \pospuiseux^{n} \colon \bo P_{1}(\bo x) \mathrel{\Box}_{1} 0, \dots,  \bo P_{m}(\bo x) \mathrel{\Box}_{m} 0\} \, ,
\]
where $\bo P_{i} \in \puiseux[X_{1},\dots, X_{n}]$ are nonzero polynomials and $\Box \in \{\ge, > \}^{m}$. Let $P_{i} \coloneqq \tropPol(\bo P_{i})$ for all $i$ and suppose that $\poscomplex(P_{1}, \dots, P_{m})$ has regular support. Then
\[
\val(\bsalg) = \{x \in \R^{n} \colon \forall i, P_{i}^{+}(x) \ge P_{i}^{-}(x) \} \, .
\]
\end{corollary}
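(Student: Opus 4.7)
I will establish the chain of inclusions
\[
\strictlypositivesupport(P_1,\dots,P_m)\ \subset\ \val(\bsalg)\ \subset\ \positivesupport(P_1,\dots,P_m)
\]
and then combine the closedness of $\val(\bsalg)$, which follows from \cref{theorem:images_are_closed}, with the regularity hypothesis via \cref{lemma:purity_closes_strict} to pinch $\val(\bsalg)$ to $\positivesupport(P_1,\dots,P_m)$.

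For the right-hand inclusion, I would fix $\bo x\in\bsalg$. Since $\bo x\in\pospuiseux^n$ and the polynomials $\bo P_i^{\pm}$ have nonnegative coefficients, both $\bo P_i^+(\bo x)$ and $\bo P_i^-(\bo x)$ lie in $\nnpuiseux$. Whether the constraint on $\bo P_i$ is a strict or a non-strict inequality, the difference $\bo P_i^+(\bo x)-\bo P_i^-(\bo x)=\bo P_i(\bo x)$ is nonnegative, which forces $\val(\bo P_i^+(\bo x))\ge\val(\bo P_i^-(\bo x))$, since otherwise the leading term of the difference would come from the negative summand, contradicting positivity. Because $P_i^{\pm}$ involve only tropically positive coefficients and thus cannot vanish at any point of $\postrop^n$, \cref{lemma:homomorphism} applies and gives $\val(\bo P_i^{\pm}(\bo x))=P_i^{\pm}(\val(\bo x))$, so $P_i^+(\val(\bo x))\ge P_i^-(\val(\bo x))$ as required.

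For the left-hand inclusion, I would pick $x\in\R^n$ with $P_i^+(x)>P_i^-(x)$ for every $i$ and use the explicit lift $\bo x\in\pospuiseux^n$ defined by $\bo x_k\coloneqq t^{x_k}$. \cref{lemma:homomorphism} yields $\val(\bo P_i^+(\bo x))=P_i^+(x)>P_i^-(x)=\val(\bo P_i^-(\bo x))$, so the leading term of $\bo P_i(\bo x)=\bo P_i^+(\bo x)-\bo P_i^-(\bo x)$ is positive. Hence $\bo P_i(\bo x)>0$ for every $i$, so $\bo x\in\bsalg$ and $x=\val(\bo x)\in\val(\bsalg)$.

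To finish, I observe that since $\bsalg\subset\pospuiseux^n$, the image $\val(\bsalg)$ sits entirely in the single stratum indexed by $[n]$, so \cref{theorem:images_are_closed} makes it a closed subset of $\R^n$. The regularity of $\positivesupport(P_1,\dots,P_m)$ then lets me invoke \cref{lemma:purity_closes_strict}, and chaining the two inclusions above gives
\[
\positivesupport(P_1,\dots,P_m)=\cl(\strictlypositivesupport(P_1,\dots,P_m))\subset\cl(\val(\bsalg))=\val(\bsalg)\subset\positivesupport(P_1,\dots,P_m),
\]
which yields equality throughout. I do not foresee a genuine obstacle: the argument is a clean packaging of the valuation/tropical dictionary together with the closedness and purity lemmas of the preceding sections; the only subtle point is checking that the inequality/valuation interplay works uniformly for both strict and non-strict constraints, which it does because in both cases we have $\bo P_i(\bo x) \ge 0$.
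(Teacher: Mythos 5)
Your proof is correct and follows essentially the same route as the paper's: the same sandwich $\strictlypositivesupport(P_1,\dots,P_m)\subset\val(\bsalg)\subset\positivesupport(P_1,\dots,P_m)$, established via \cref{lemma:homomorphism} and the order-preservation of $\val$ on $\nnpuiseux$, followed by the closedness from \cref{theorem:images_are_closed} and the regularity-based closure identity from \cref{lemma:purity_closes_strict}. The only cosmetic difference is that you exhibit the concrete lift $\bo x_k = t^{x_k}$ for the left inclusion where the paper observes that any lift in $\pospuiseux^n$ works, and you spell out in a bit more detail why \cref{lemma:homomorphism} applies to $P_i^\pm$ (namely, that they have only positive coefficients and hence do not vanish on $\postrop^n$) – both are harmless elaborations of the same argument.
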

\begin{proof}
Denote $\positivesupport \coloneqq \{x \in \R^{n} \colon \forall i, P_{i}^{+}(x) \ge P_{i}^{-}(x) \}$ and suppose that $\bo x \in \bsalg$. Since $\val$ is order preserving, we have $\val(\bo x) \in \positivesupport$ by \cref{lemma:homomorphism}. Therefore $\val(\bsalg) \subset \positivesupport$. On the other hand, if we take any point $x$ such that $P_{i}^{+}(x) > P_{i}^{-}(x)$ for all $i$, then any lift $\bo x \in \val^{-1}(x) \cap \pospuiseux^{n}$ belongs to $\bsalg$. Hence, we have the inclusion
\[
\{x \in \R^{n} \colon \forall i, P_{i}^{+}(x) > P_{i}^{-}(x) \} \subset \val(\bsalg) \subset \{x \in \R^{n} \colon \forall i, P_{i}^{+}(x) \ge P_{i}^{-}(x) \} \,
\]
and the claim follows from \cref{lemma:purity_closes_strict,theorem:images_are_closed}.
\end{proof}

\begin{figure}[t]
\centering
\begin{tikzpicture}
\begin{scope}[scale = 0.6]
      \draw[gray!60, ultra thin] (0,0) grid (8,8);
      \draw[gray!80!black, thin, ->] (0,4) -- (8,4) node[below right] {$x_1$};
      \draw[gray!90!black, thin, ->] (4,0) -- (4,8) node[below left] {$x_2$};
      \fill[fill = lightgray, fill opacity = 0.7] (0,0) -- (0,3) -- (3,3) -- (3,0) -- cycle;
      \fill[fill = lightgray, fill opacity = 0.7] (8,5) -- (5,5) -- (5,8) -- (8,8) -- cycle;
        \draw[very thick] (0,3) -- (3,3) -- (3,0);
        \draw[very thick] (8,5) -- (5,5) -- (5,8);
        \draw[very thick] (3,3) -- (5,5); 
        \fill[black] (3,3) circle (0.15);
        \fill[black] (5,5) circle (0.15);
\end{scope}
\end{tikzpicture}
\caption{Polyhedral complex $\poscomplex(P)$.}\label[figure]{fig:newton_polytope_with_impure_complex}
\end{figure}

\begin{example}\label[example]{ex:hypersurface}
Take $P = 0 \tplus (X_{1}^{\tdot 2} \tdot X_{2}^{\tdot 2}) \tplus (2 \tdot X_{1} \tdot X_{2}) \tplus (\tminus 2 \tdot X_{1}^{\tdot 2}) \tplus (\tminus 2 \tdot X_{2}^{\tdot 2})$. Then $\poscomplex(P)$ is depicted in \cref{fig:newton_polytope_with_impure_complex}. This support of this complex is not regular and \cref{corollary:purity_commutation} does not apply. Indeed, take $\bo P(\bo x_{1}, \bo x_{2}) = 1 + \bo x_{1}^{2}\bo x_{2}^{2} + t^{2} \bo x_{1} \bo x_{2} - t^{2} \bo x_{1}^{2} - t^{2} \bo x_{2}^{2}$. We have $\tropPol(\bo P) = P$, but the set $\val(\{(\bo x_{1}, \bo x_{2}) \in \pospuiseux^{2} \colon \bo P(\bo x_{1}, \bo x_{2}) \ge 0 \})$ does not contain the open segment $](-1,-1), (1,1)[$.
\end{example}

\begin{remark}\label[remark]{remark:kapranov}
  The result of \cref{corollary:purity_commutation} may be thought of as a semialgebraic analogue of the Kapranov's theorem~\cite[Theorem~2.1.1]{einsiedler_kapranov}. This theorem characterizes an image by valuation of a complex hypersurface. A difference is that Kapranov's theorem involves only one polynomial equality constraint,
  while \cref{corollary:purity_commutation} applies
  to several
  polynomial inequality constraints. On the other hand, Kapranov's theorem does not require any additional assumptions, while the purity condition of \cref{corollary:purity_commutation} cannot be dropped, as shown by \cref{ex:hypersurface}. Furthermore, we note that Kapranov's theorem can be generalized
  to fields with higher rank value groups~\cite{aroca_hypersurfaces,banerjee_higher_rank}. We do not pursue this direction here, but we note that higher rank analogues of \cref{corollary:purity_commutation} can be deduced
  from it using the quantifier elimination result of \cref{theorem:qe_vrcf}.
\end{remark}

After this manuscript was submitted, the tropicalization of semialgebraic sets was further studied in the work of Jell, Scheiderer, and Yu~\cite{jell_scheiderer_yu}. The main focus of \cite{jell_scheiderer_yu} is to study the real analytification of semialgebraic sets, but the authors also obtain results about the topology of tropicalized sets, similar to the results proved above. In order to facilitate the comparison between the two papers, we now briefly discuss these results.

The authors of \cite{jell_scheiderer_yu} use a more expressive
notion of tropicalization that also captures the sign structure of the set. More precisely, instead of using the valuation map $\val \colon \puiseux \to \trop$, they use the signed valuation map $\sval \colon \puiseux \to \strop$. In order to equip the space of signed tropical numbers with a topology, we extend the order on $\trop$ to an order on $\strop$ in the natural way, by supposing that negative tropical numbers are smaller than the positive tropical numbers (so that $\tminus 3 < \tminus (-2) < \zero < (-2) < 3$). This induces an order topology on $\strop$ and a product topology on $\strop^{n}$. We also note that $\strop$ is homeomorphic to $\R$, with the homeomorphism given by the map 
\[
\strop \ni a \mapsto \begin{cases}
\sign(a)\exp(\abs{a}) &\text{if $a \neq \zero$} \\
0 &\text{otherwise.}
\end{cases}
\]
This also implies that $\strop^{n}$ is homeomorphic to $\R^{n}$. In particular, if $\bsalg \subset \puiseux^{n}$ is a semialgebraic set, then we can study the topological properties of its image $\sval(\bsalg) \subset \strop^{n}$. (We also note that the authors of \cite{jell_scheiderer_yu} do not use $\sval$, but its homeomorphic version given above. Since these maps differ by a homeomorphism, this choice does not impact the topological results.) Similarly to $\R^{n}$, the space $\strop^{n}$ has $2^{n}$ (closed) orthants, defined in the following way. If $\mu \in \{-1, +1\}^{n}$, then the \emph{$\mu$-orthant} of $\strop^{n}$ is defined as
\[
\torth^{n} \coloneqq \{x \in \strop^{n} \colon \forall k, \, \sign(x_{k}) \in \{0, \mu_{k}\} \} \, .
\]
We note that the orthants are closed in the topology of $\strop^{n}$. Moreover, they are homeomorphic to $\trop^{n}$, with the homeomorphism given by the absolute value map
\begin{equation}\label{eq:abs_value_map}
\torth \ni (x_{1} ,\dots, x_{n}) \mapsto (\abs{x_{1}}, \dots, \abs{x_{n}}) \in \trop^{n} \, .
\end{equation}
We can also divide $\strop^{n}$ into $3^{n}$ strata, in the following way. If $\sigma \in \{-1,0,+1\}^{n}$ is a vector of signs, then we define the \emph{$\sigma$-stratum} of $\strop^{n}$ as
\begin{equation}\label{eq:sigma_stratum}
\strop^{n, (\sigma)} \coloneqq \{x \in \strop^{n} \colon \forall k \in [n], \, \sign(x_{k}) = \sigma_{k} \} \, .
\end{equation}
We point out that in \cref{sec:model} we defined ``stratum'' as a set of points with only finite coordinates. However, in the present context it is more convenient to consider $\sigma$-strata as sets of points with some coordinates equal to $-\infty$, as in~\cref{eq:sigma_stratum}.
We note that the strata of $\strop^{n}$ are generally \emph{not} closed in the topology of $\strop^{n}$. More precisely, if $\sigma \neq 0$ and $\sum_{k} \abs{\sigma_{k}} = p$, then $\strop^{n, (\sigma)}$ is homeomorphic to $\postrop^{p}$ and the homeomorphism is given by the \emph{sign-forgetting projection} $\pi^{(\sigma)} \colon \strop^{n, (\sigma)} \to \postrop^{p}$ defined as $\pi^{(\sigma)}(x_{1}, \dots, x_{n}) \coloneqq (\abs{x_{k_{1}}}, \dots, \abs{x_{k_{p}}})$, where $\{k_{1}, \dots, k_{p}\}$ are the indices such that $\sigma_{k_{i}} \neq 0$. 

The following result about the topology of $\sval(\bsalg)$
follows from \cref{theorem:images_are_closed}, compare with \cite[Theorem~6.5]{jell_scheiderer_yu}.

\begin{corollary}\label[corollary]{corollary-compare}
If $\bsalg \subset \puiseux^{n}$ is a semialgebraic set, then $\sval(\bsalg) \cap \strop^{n,(\sigma)}$ is closed in the induced topology of $\strop^{n,(\sigma)}$ for every $\sigma \in \{-1,0,+1\}^{n}$. Moreover, if $\bsalg$ is closed, then $\sval(\bsalg)$ is closed in the topology of $\strop^{n}$.
\end{corollary}
\begin{proof}
To prove the first part, fix $\bsalg \subset \puiseux^{n}$ and $\sigma \in \{-1,0,+1\}^{n}$. If $\sigma = 0$, then the claim is trivial. Otherwise, consider the semialgebraic set $\bsalg_{\sigma} \coloneqq \{\bo x \in \bsalg \colon \forall k, \, \bo \sign(\bo x_{k}) = \sigma_{k} \}$ and note that $\sval(\bsalg) \cap \strop^{n,(\sigma)} = \sval(\bsalg_{\sigma})$. Denote $p \coloneqq \sum_{k} \abs{\sigma_{k}}$ and $K \coloneqq \{k \in [n] \colon \sigma_{k} \neq 0\}$. Let $\salg \coloneqq \val(\bsalg_{\sigma}) \subset \trop^{n}$ and let $\salg_{K} \subset \postrop^{p}$ denote the stratum of $\salg$ associated with $K$. By \cref{theorem:images_are_closed}, the set $\salg_{K}$ is closed in the topology of $\postrop^{p}$. Therefore, the claim follows from the fact that $\sval(\bsalg_{\sigma}) = (\pi^{(\sigma)})^{-1}(\salg_{K})$, where $\pi^{(\sigma)} \colon \strop^{n,(\sigma)} \to \postrop^{p}$ is the sign-forgetting projection. To prove the second part, fix a vector $\mu \in \{-1,+1\}^{n}$, let
\[
\puiseux^{n}_{\mu} \coloneqq \{\bo x \in \puiseux^{n} \colon \forall k, \, \sign(\bo x_{k}) \in \{0, \mu_{k}\} \}
\]
denote the corresponding orthant of the Puiseux series, and let $\bsalg_{\mu} \coloneqq \bsalg \cap \puiseux_{\mu}$. By \cref{theorem:images_are_closed}, the set $\val(\bsalg_{\mu})$ is closed in the topology of $\trop^{n}$. Moreover, $\sval(\bsalg_{\mu}) \subset \torth^{n}$ is the preimage of $\val(\bsalg_{\mu})$ under the absolute value map given in \cref{eq:abs_value_map}. Hence, $\sval(\bsalg_{\mu})$ is a closed subset of $\torth^{n}$ and therefore it is also a closed subset of $\strop^{n}$. The claim follows from the fact that $\sval(\bsalg) = \bigcup_{\mu \in \{-1,+1\}^{n}} \sval(\bsalg_{\mu})$.
\end{proof}

\section{Tropical spectrahedra}\label[section]{section:spectrahedra}

\subsection{Tropicalization of nonarchimedean spectrahedra} We now introduce the notion of tropical spectrahedra.
\begin{definition}
A set $\spectra \subset \trop^n$ is said to be a \emph{tropical spectrahedron} if there exists a spectrahedron $\bspectra \subset \nnpuiseux^{n}$ such that $\spectra = \val(\bspectra)$.
\end{definition}
If $\spectra = \val(\bspectra)$, then we refer to $\spectra$ as the \emph{tropicalization} of the spectrahedron $\bspectra$, and $\bspectra$ is said to be a \emph{lift (over the field $\puiseux$)} of~$\spectra$. 

Recall that we have the following characterization of positive semidefinite matrices:
\begin{proposition}\label[proposition]{prop:semidefinite_by_minors}
A symmetric matrix $\bo A \in \puiseux^{m \times m}$ is positive semidefinite if and only if every principal minor of $\bo A$ is nonnegative.
\end{proposition}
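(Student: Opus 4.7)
The plan is to prove this by transfer from $\R$, following the same strategy employed in the proof of \cref{point_with_zeros}. Fix the size $m$. The assertion that a symmetric matrix $\bo A = (a_{ij})$ is positive semidefinite can be expressed as the $\lorings$-formula
\[
\forall v_1, \dots, v_m, \ \sum_{i,j=1}^m a_{ij} v_i v_j \geq 0,
\]
while the assertion that every principal minor of $\bo A$ is nonnegative is simply a finite conjunction of polynomial inequalities in the entries $a_{ij}$. Hence, for each fixed $m$, the biconditional in the proposition can be stated as a first-order sentence $\psi_m$ in $\lorings$ (with an outer universal quantification over the tuple of entries $(a_{ij})_{1 \leq i \leq j \leq m}$).

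Over the field of real numbers, $\psi_m$ is a classical fact. The forward direction is immediate: each principal submatrix of a positive semidefinite matrix is positive semidefinite (restrict the quadratic form to the corresponding coordinate subspace), and thus has nonnegative determinant, since its eigenvalues are nonnegative. For the converse, I recall that the elementary symmetric functions $e_1, \dots, e_m$ of the eigenvalues of $\bo A$ coincide with the sums of its principal minors of each size, as seen from the characteristic polynomial $\det(\lambda I - \bo A) = \lambda^m - e_1 \lambda^{m-1} + \dots + (-1)^m e_m$. Nonnegativity of every principal minor therefore yields $e_k \geq 0$ for each $k$, so by Descartes' rule of signs the characteristic polynomial has no negative roots; since $\bo A$ is symmetric, its eigenvalues are real, and therefore all nonnegative, which proves $\bo A \succeq 0$.

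Since $\puiseux$ is a real closed field and the theory $\thRcf$ is complete (Tarski's theorem, already invoked in the proof of \cref{point_with_zeros}), the sentence $\psi_m$, being valid over $\R$, is equally valid over $\puiseux$. I do not anticipate any substantial obstacle: the only care needed is in formulating $\psi_m$ as a genuine $\lorings$-sentence for each fixed $m$, and the transfer argument then proceeds exactly as in the earlier applications of model completeness in the paper.
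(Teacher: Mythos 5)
The paper does not actually supply a proof of this proposition: it is introduced with ``Recall that we have the following characterization'' and is cited as a classical fact. So there is no in-paper argument to compare against; your proposal should be judged on its own merits, and on those terms it is correct.

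Your transfer argument is sound. For each fixed $m$, both ``$\bo A$ is positive semidefinite'' (quantifying $v^{T}\bo A v \ge 0$ over all $v$) and ``every principal minor of $\bo A$ is nonnegative'' (a finite conjunction of polynomial inequalities in the entries) are first-order conditions in $\lorings$, so the biconditional, universally quantified over the $m(m+1)/2$ entries, is indeed an $\lorings$-sentence $\psi_m$. Validity of $\psi_m$ over $\R$ is a standard linear-algebra fact, and your proof of it is correct: the forward direction is the restriction of the quadratic form to coordinate subspaces; the converse uses that the characteristic polynomial $\det(\lambda I - \bo A) = \lambda^m - e_1\lambda^{m-1} + \cdots + (-1)^m e_m$ has $e_k \ge 0$ whenever all principal $k\times k$ minors are nonnegative, so $p(-\mu)$ has all coefficients of one sign and thus $p$ has no negative roots, which combined with the reality of the spectrum gives positive semidefiniteness. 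Tarski's completeness of $\thRcf$ then transfers $\psi_m$ from $\R$ to $\puiseux$. As an alternative, one could avoid transfer entirely by noting that the spectral theorem and Descartes' rule of signs already hold over any real closed field, so the direct argument applies verbatim in $\puiseux$; but the transfer route is cleaner, since it sidesteps having to re-verify those facts over an arbitrary RCF, and it matches the style used elsewhere in the paper (cf.\ \cref{point_with_zeros}).
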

Given symmetric matrices $\bo Q^{(0)}, \dots, \bo Q^{(n)} \in \puiseux^{m \times m}$ and $\bo x \in \puiseux^n$, we denote by $\bo Q(\bo x)$ the matrix pencil $\bo Q^{(0)} + \bo x_1 \bo Q^{(1)}+ \dots + \bo x_n \bo Q^{(n)}$. \Cref{prop:semidefinite_by_minors} provides a description of the spectrahedron $\bspectra = \{ \bo x \in \nnpuiseux^n \colon \bo Q(\bo x) \loew 0 \}$ by a system of polynomial inequalities of the form $\det \bo Q_{I \times I}(\bo x) \geq 0$, where $I$ is a nonempty subset of $[m]$, and $\det \bo Q_{I \times I}(\bo x)$ corresponds to the $(I \times I)$-minor of the matrix $\bo Q(\bo x)$. Following this, we obtain that the tropical spectrahedron~$\spectra$ is included in the intersection of the sets $\{ x \in \trop^n \colon \tropPol(\bo P)^+(x) \geq \tropPol(\bo P)^-(x) \}$ where $\bo P$ is a polynomial of the form $\det \bo Q_{I \times I}(\bo x)$. In general, this inclusion may be strict. We refer to~\cite[Example~15]{tropical_simplex} for an example in which $\bspectra$ is a polyhedron. Nevertheless, under the regularity assumption stated in \cref{corollary:purity_commutation}, both sets coincide. In fact, we prove that, under similar assumptions, tropical spectrahedra have a description that is much simpler than the one provided by \cref{corollary:purity_commutation}. This description only involves principal tropical minors of order $2$. 

Our results are divided into three parts. In \cref{subsubsec:tropical_metzler_spectrahedra} we deal with spectrahedra defined by Metzler matrices $\bo Q^{(0)}, \dots, \bo Q^{(n)}$ (i.e., matrices in which the off-diagonal entries are nonpositive). 
This enables us to use a lemma that is similar to \cref{corollary:purity_commutation} in order to give a description of tropical spectrahedra under a regularity assumption.

In \cref{subsection:nonmetzler} we switch to non-Metzler matrices. In this case, tropical spectrahedra 
may not 
be regular, even under strong genericity assumptions. Nevertheless, we are able to extend our previous analysis to this case and give a description, involving only principal minors of size at most~$2$, of non-Metzler spectrahedra, under a regularity assumption over some associated sets.

Finally, the purpose of \cref{section:generic} is to show that the regularity assumptions used in \cref{subsubsec:tropical_metzler_spectrahedra,subsection:nonmetzler} hold generically.

Let us start with some introductory remarks. First, observe that in order to characterize the class of tropical spectrahedra, it is enough to restrict ourselves to tropical spectrahedral cones, as the image of a spectrahedron can be deduced from the image of its homogenized version. This is formally stated in the next lemma.
\begin{lemma}
Let $\bo Q^{(0)}, \dots, \bo Q^{(n)} \in \puiseux^{m \times m}$ be a sequence of symmetric matrices. Define
\[
\bspectra \coloneqq \{ \bo x \in \nnpuiseux^{n} \colon \bo Q^{(0)} + \bo x_{1} \bo Q^{(1)}+ \dots + \bo x_{n} \bo Q^{(n)} \loew 0 \} \,
\]
and
\[
\hombspectra \coloneqq \{ (\bo x_0, \bo x) \in \nnpuiseux^{n+1} \colon \bo x_0 \bo Q^{(0)} + \bo x_{1} \bo Q^{(1)}  + \dots + \bo x_{n} \bo Q^{(n)} \loew 0 \} \, .
\]
Then
\[
\val(\bspectra) = \pi(\{ x \in \val(\hombspectra) \colon x_0 = 0 \}) \, ,
\]
where $\pi \colon \trop^{n+1} \to \trop^{n}$ denotes the projection that forgets the first coordinate.
\end{lemma}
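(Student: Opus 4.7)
The plan is to prove the two inclusions separately, using a homogenization/dehomogenization argument that exploits the scale invariance of the Loewner order under multiplication by a strictly positive scalar. The only subtlety is the condition $x_{0}=0$, which should be read as the real number $0 \in \R \subset \Rinfty$, i.e.\ $\val(\bo x_{0}) = 0$; equivalently, it forces any lift $\bo x_{0} \in \nnpuiseux$ to be nonzero, hence strictly positive in $\puiseux$.

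For the inclusion $\val(\bspectra) \subset \pi(\{x \in \val(\hombspectra) \colon x_{0}=0\})$, I would take $x \in \val(\bspectra)$ and pick any lift $\bo x \in \bspectra$ with $\val(\bo x) = x$. Setting $\bo x_{0} \coloneqq 1 \in \pospuiseux$ leaves the pencil $\bo x_{0}\bo Q^{(0)} + \sum_{i=1}^{n} \bo x_{i} \bo Q^{(i)}$ equal to $\bo Q^{(0)} + \sum_{i=1}^{n} \bo x_{i} \bo Q^{(i)}$, so $(\bo x_{0},\bo x) \in \hombspectra$. Since $\val(1)=0$, we get $\val((\bo x_{0},\bo x)) = (0,x)$, which lies in $\{y \in \val(\hombspectra) \colon y_{0}=0\}$ and projects to $x$ under~$\pi$.

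For the reverse inclusion, take $y \in \val(\hombspectra)$ with $y_{0}=0$, and choose a lift $(\bo x_{0},\bo x) \in \hombspectra$ with $\val(\bo x_{0})=0$ and $\val(\bo x)=(y_{1},\dots,y_{n})$. The key observation is that $\bo x_{0} \in \nnpuiseux$ together with $\val(\bo x_{0})=0 \neq -\infty$ forces $\bo x_{0} > 0$ in $\puiseux$. Dividing the matrix inequality $\bo x_{0}\bo Q^{(0)} + \sum_{i=1}^{n}\bo x_{i}\bo Q^{(i)} \loew 0$ by the positive scalar $\bo x_{0}$ preserves the Loewner order and yields
\[
\bo Q^{(0)} + \sum_{i=1}^{n} \frac{\bo x_{i}}{\bo x_{0}}\,\bo Q^{(i)} \loew 0 \, .
\]
Since each $\bo x_{i}/\bo x_{0}$ lies in $\nnpuiseux$, the vector $\bo y' \coloneqq \bo x/\bo x_{0}$ belongs to $\bspectra$. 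By the multiplicativity of the valuation, $\val(\bo y'_{i}) = \val(\bo x_{i}) - \val(\bo x_{0}) = y_{i} - 0 = y_{i}$ for each $i \in [n]$, so $\val(\bo y') = \pi(y)$ and therefore $\pi(y) \in \val(\bspectra)$.

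The argument is essentially bookkeeping, and I do not expect a genuine obstacle. The only point that requires a moment of care is justifying that $\bo x_{0} > 0$ strictly, so that the division step is legitimate; this uses precisely the fact that $x_{0}=0$ refers to the additive identity of $\R$ rather than the tropical zero~$\zero$, ruling out the degenerate case $\bo x_{0}=0$.
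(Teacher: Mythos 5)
Your proof is correct and follows essentially the same route as the paper: lift to $\hombspectra$ with $\bo x_0 = 1$ for one inclusion, and for the other lift $(\bo x_0, \bo x) \in \hombspectra$ with $\val(\bo x_0) = 0$ and rescale by $\bo x_0$ using multiplicativity of $\val$. Your explicit justification that $\bo x_0 > 0$ (because $x_0 = 0 \neq \zero$ forces $\bo x_0 \in \nnpuiseux \setminus \{0\}$) is a small point the paper leaves implicit, but otherwise the two arguments coincide.
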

\begin{proof}
We start by proving the inclusion $\subset$. Take any $x \in \val(\bspectra)$ and its lift $\bo x \in \bspectra \cap \val^{-1}(x)$. Observe that the point $(1, \bo x)$ belongs to $\hombspectra$. Therefore, the point $(0, x)$ belongs to $\val(\hombspectra)$ and $x$ belongs to $\pi(\{ x \in \val(\hombspectra) \colon x_0 = 0 \})$. Conversely, let $x$ belong to $\pi(\{ x \in \val(\hombspectra) \colon x_0 = 0 \})$. Then $(0,x)$ belongs to $\val(\hombspectra)$. In other words, there exists a lift $(\bo z, \bo x) \in \hombspectra$ such that $\val(\bo z) = 0$ and $\val(\bo x) = x$. Take the point $(1, \bo x/\bo z)$. This point also belongs to $\hombspectra$. Moreover, $\bo x / \bo z$ belongs to $\bspectra$. Hence, the point $x = \val(\bo x / \bo z)$ belongs to $\val(\bspectra)$.
\end{proof}

Second, let us explain our approach to the tropicalization of spectrahedra. It relies on the next elementary lemma. The latter should
be compared with
a result of Yu~\cite{yu_semidefinite_cone} that shows
that the set of images by the valuation of the set of positive semidefinite
matrices $\bo A$ over the field of Puiseux series is determined by the inequalities $\val(\bo A_{ii}) +  \val (\bo A_{jj}) \geq 2 \val (\bo A_{ij})$ for $i\neq j$. 

\begin{lemma}\label[lemma]{lemma:dominant_minors_of_order_two}
Let $\bo A \in \puiseux^{m \times m}$ be a symmetric matrix.
Suppose that $\bo A$ has nonnegative entries on its diagonal and that the inequality $\bo A_{ii}\bo A_{jj} \ge (m-1)^{2}\bo A_{ij}^{2}$ holds for all pairs $(i,j)$ such that $i \neq j$. Then $\bo A$ is positive semidefinite.
\end{lemma}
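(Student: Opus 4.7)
The plan is to prove the statement directly by showing that the quadratic form $\bo x^T \bo A \bo x$ is nonnegative for every $\bo x \in \puiseux^m$. This is equivalent to positive semidefiniteness in any real closed field, and the argument only relies on the elementary AM-GM inequality $2ab \leq a^2 + b^2$, which holds over $\puiseux$ since $(a-b)^2 \geq 0$ for all $a,b \in \puiseux$.

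First, I would expand the quadratic form as
\[
\bo x^T \bo A \bo x = \sum_{i=1}^m \bo A_{ii} \bo x_i^2 + 2\sum_{i<j} \bo A_{ij} \bo x_i \bo x_j \, .
\]
Since $\bo A_{ii}, \bo A_{jj} \geq 0$ and $\puiseux$ is real closed, the quantities $\sqrt{\bo A_{ii}}$ and $\sqrt{\bo A_{jj}}$ exist in $\nnpuiseux$. The hypothesis rewrites as $|\bo A_{ij}| \leq \frac{1}{m-1}\sqrt{\bo A_{ii}\bo A_{jj}}$, so every cross term satisfies
\[
\bigl|2 \bo A_{ij} \bo x_i \bo x_j\bigr| \leq \frac{2}{m-1}\bigl(\sqrt{\bo A_{ii}}\,|\bo x_i|\bigr)\bigl(\sqrt{\bo A_{jj}}\,|\bo x_j|\bigr) \leq \frac{1}{m-1}\bigl(\bo A_{ii}\bo x_i^2 + \bo A_{jj}\bo x_j^2\bigr) \, ,
\]
using AM-GM applied to $\sqrt{\bo A_{ii}}|\bo x_i|$ and $\sqrt{\bo A_{jj}}|\bo x_j|$.

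The key bookkeeping step is to sum this bound over all pairs $i<j$. Each diagonal term $\bo A_{ii} \bo x_i^2$ arises in exactly $m-1$ of the pairs (once for each $j \neq i$). Summing the inequalities above therefore yields
\[
\Bigl|2\sum_{i<j}\bo A_{ij}\bo x_i\bo x_j\Bigr| \leq \frac{1}{m-1}\sum_{i<j}\bigl(\bo A_{ii}\bo x_i^2 + \bo A_{jj}\bo x_j^2\bigr) = \sum_{i=1}^m \bo A_{ii}\bo x_i^2 \, .
\]
Combined with the expansion above, this gives $\bo x^T \bo A \bo x \geq 0$, proving that $\bo A$ is positive semidefinite.

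There is no substantial obstacle in this argument; the constant $(m-1)^2$ in the hypothesis is tailored precisely so that the counting in the last step produces a clean cancellation. One might alternatively invoke \cref{prop:semidefinite_by_minors} and argue inductively on principal minors, but the direct AM-GM argument is shorter and does not require any separate treatment of the base case.
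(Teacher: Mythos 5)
Your proof is correct, and it takes a genuinely different route from the paper. The paper first splits into cases depending on whether all diagonal entries are strictly positive: when they are, it rescales $\bo A$ by the diagonal matrix $\bo B$ with $\bo B_{ii} = \bo A_{ii}^{-1/2}$ to obtain a matrix with unit diagonal whose off-diagonal entries are bounded by $1/(m-1)$, and then invokes the fact that diagonally dominant symmetric matrices are positive semidefinite; when some diagonal entry vanishes, the hypothesis forces the corresponding row and column to vanish, so it reduces to a principal submatrix. Your argument instead estimates the quadratic form $\bo x^T \bo A \bo x$ directly via the pointwise bound $\lvert \bo A_{ij}\rvert \le \tfrac{1}{m-1}\sqrt{\bo A_{ii}\bo A_{jj}}$ and AM--GM, and the counting of how often each diagonal term appears makes the factor $(m-1)$ cancel exactly. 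This is essentially the usual proof of the diagonal-dominance criterion, inlined; the benefit is that it needs no case split for vanishing diagonal entries (your bound $\lvert \bo A_{ij}\rvert \le \tfrac{1}{m-1}\sqrt{\bo A_{ii}\bo A_{jj}}$ already forces $\bo A_{ij}=0$ when $\bo A_{ii}=0$) and no appeal to a separate lemma, while the paper's version is shorter once diagonal dominance is taken for granted. Both proofs use real closedness only to extract square roots, so they transfer identically to any real closed field.
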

\begin{proof}
If $\bo A$ is a zero matrix, then there is nothing to show. From now on we suppose that $\bo A$ has at least one nonzero entry. First, let us suppose that $\bo A$ has positive entries on its diagonal. In this case, let $\bo B \in \puiseux^{m \times m}$ be the diagonal matrix defined by $\bo B_{ii} \coloneqq \bo A_{ii}^{-1/2}$ for all $i$. Observe that $\bo A$ is positive semidefinite if and only if the matrix $\bo D \coloneqq \bo B \bo A \bo B$ is positive semidefinite. Moreover, $\bo D$ has ones on its diagonal and $\bo D_{ij} = \bo A_{ii}^{-1/2} \bo A_{ij} \bo A_{jj}^{-1/2}$ for all $i \neq j$. Hence $\abs{\bo D_{ij}} \le 1/(m-1)$ for all $i \neq j$. Therefore $\bo D$ is diagonally dominant and hence positive semidefinite.

Second, if $\bo A$ has some zeros on its diagonal, let $I = \{i \in [m] \colon \bo A_{ii} \neq 0\}$. Since the inequality $\bo A_{ii}\bo A_{jj} \ge (m-1)^{2}\bo A_{ij}^{2}$ holds, we have $\bo A_{ij} = 0$ if either $i \notin I$ or $j \notin I$. Let $\bo A_{I}$ denote the submatrix formed by the rows and columns with indices from $I$. Then $\bo A$ is positive semidefinite if and only if $\bo A_{I}$ is positive semidefinite. Finally, $\bo A_{I}$ is positive semidefinite by the considerations from the previous paragraph.
\end{proof}
Given a spectrahedron $\bspectra= \{\bo x \in \nnpuiseux^{n} \colon \bo Q(\bo x) \loew 0\}$ we define two sets $\outbspectra$, $\inbspectra \subset \nnpuiseux^n$ as
\begin{align*}
\outbspectra &\coloneqq
\Bigl\{ \bo x \in \nnpuiseux^{n} \colon  \forall i, \bo Q_{ii}(\bo x) \ge 0 \, ,
 \forall i \neq j, \bo Q_{ii}(\bo x)\bo Q_{jj}(\bo x) \ge (\bo Q_{ij}(\bo x))^{2} \Bigr\} \, , 
 \\
\inbspectra &\coloneqq \Bigl\{ \bo x \in \nnpuiseux^{n} \colon \forall i, \bo Q_{ii}(\bo x) \ge 0 \, ,
\forall i \neq j, \bo Q_{ii}(\bo x)\bo Q_{jj}(\bo x) \ge (m-1)^2 (\bo Q_{ij}(\bo x))^{2} \Bigr\} \, .
\end{align*}

\Cref{prop:semidefinite_by_minors} shows that $\bspectra \subset \outbspectra$, while \cref{lemma:dominant_minors_of_order_two} shows that $\inbspectra \subset \bspectra$. In order to describe the set $\val(\bspectra)$, we will exhibit conditions that ensure that the tropicalizations of $\inbspectra$ and $\outbspectra$ coincide, i.e., $\val(\outbspectra) = \val(\bspectra) = \val(\inbspectra)$. 

\subsection{Tropical Metzler spectrahedra}\label[section]{subsubsec:tropical_metzler_spectrahedra}

In this section, we study the spectrahedra that are defined by Metzler matrices. Recall that a square matrix $\bo A \in \puiseux^{m \times m}$ is a
\emph{(negated) Metzler matrix} if its off-diagonal coefficients are nonpositive. Similarly, we say that a matrix $M \in \strop^{m \times m}$ is a \emph{tropical Metzler matrix} if $M_{ij} \in \negtrop \cup \{\zero\}$ for all $i \neq j$. Let $Q^{(1)}, \dots, Q^{(n)} \in \strop^{m \times m}$ be symmetric tropical Metzler matrices. Given $i, j \in [m]$, we refer to $Q_{ij}(X)$ as the tropical polynomial:
\[
Q_{ij}(X) \coloneqq Q^{(1)}_{ij} \tdot X_{1} \tplus \cdots \tplus Q^{(n)}_{ij} \tdot X_{n} \, .
\]
 
\begin{definition}\label[definition]{def:metzler_spectrahedron}
If $Q^{(1)}, \dots, Q^{(n)} \in \strop^{m \times m}$ are symmetric tropical Metzler matrices, we define the \emph{tropical Metzler spectrahedron} $\spectra(Q^{(1)}, \dots, Q^{(n)})$ described by $Q^{(1)}, \dots, Q^{(n)}$ as the set of points $x \in \trop^{n}$ that fulfill the following two conditions:
\begin{compactitem}
\item for all $i \in [m]$, $Q_{ii}^{+}(x) \ge Q_{ii}^{-}(x)$;
\item for all $i,j \in [m]$, $i <j$, $Q_{ii}^{+}(x) \tdot Q_{jj}^{+}(x) \ge (Q_{ij}(x))^{\tdot 2}$.
\end{compactitem}
\end{definition}
Observe that the term $Q_{ij}(x)$ ($i \neq j$) is well defined for any $x \in \trop^n$ thanks to the Metzler property of the matrices $Q^{(k)}$. 

Where there is no ambiguity, we denote $\spectra(Q^{(1)}, \dots, Q^{(n)})$ by~$\spectra$. With standard notation, the constraints defining this set respectively read: for all $i \in [m]$, 
\begin{equation}
\max_{Q^{(k)}_{i i} \in \postrop} \bigl(Q^{(k)}_{i i} + x_k \bigr)
\geq 
\max_{Q^{(l)}_{i i} \in \negtrop} \bigl(\abs{Q^{(l)}_{i i}} + x_l \bigr)\, ,\label[equation]{eq:first_kind}
\end{equation}
and for all $i, j \in [m]$ such that $i < j$,
\begin{equation}
\begin{multlined}
\max_{Q^{(k)}_{i i} \in \postrop} \bigl(Q^{(k)}_{i i} + x_k\bigr) + \max_{Q^{(k')}_{j j} \in \postrop} \bigl(Q^{(k')}_{j j} + x_{k'} \bigr)
\geq 2 \max_{l \in [n]} \bigl(\abs{Q^{(l)}_{i j}} + x_l \bigr)\, .
\end{multlined} \label[equation]{eq:second_kind}
\end{equation}

\begin{remark}\label[remark]{remark-game}
  The system of constraints~\cref{eq:first_kind} and~\cref{eq:second_kind} has an interpretation in terms of games.
  Indeed, it is shown in~\cite{issac2016jsc,mega2017} to be equivalent
  to an equation of the form $x\leq T(x)$, where $T$ is the dynamic
  programming operator of a turn-based stochastic mean payoff game.
  The existence
  of a solution $x\in \trop^n$, not identically $-\infty$, is
  equivalent to the existence of an initial state that is winning.
  See also~\cite{1802.07712,skomra:tel-01958741} for
  more information on the relation between tropical spectrahedra
  and turn-based games.
  \end{remark}
The next proposition justifies the terminology introduced in \cref{def:metzler_spectrahedron}, and ensures that the set $\spectra$ is indeed a tropical spectrahedron. To this end, we explicitly construct a spectrahedron $\bspectra \subset \nnpuiseux^{n}$ verifying $\val(\bspectra) = \spectra$. (This result was already announced in the conference version of the paper~\cite{issac2016jsc}. We provide here the full proof for the sake of completeness.)
\begin{proposition}\label[proposition]{proposition:tropical_metzler_spectrahedron}
The set $\spectra(Q^{(1)}, \dots, Q^{(n)})$ is a tropical spectrahedron.
\end{proposition}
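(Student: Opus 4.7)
The plan is to exhibit an explicit spectrahedron $\bspectra \subset \nnpuiseux^{n}$ whose valuation equals $\spectra(Q^{(1)}, \dots, Q^{(n)})$. I would fix a real constant $K > mn$ and define symmetric Metzler matrices $\bo Q^{(k)} \in \puiseux^{m \times m}$ entrywise by setting $\bo Q^{(k)}_{ii} = K t^{a}$ when $Q^{(k)}_{ii} = a \in \postrop$, setting $\bo Q^{(k)}_{ij} = - t^{a}$ when $Q^{(k)}_{ij} = \tminus a \in \negtrop$ (whether $i = j$ or not), and setting $\bo Q^{(k)}_{ij} = 0$ when $Q^{(k)}_{ij} = \zero$. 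The scaling factor $K$ on the positive diagonal coefficients is designed precisely to preclude the cancellations that would otherwise spoil the valuation. Defining $\bspectra \coloneqq \{ \bo x \in \nnpuiseux^{n} : \bo x_{1} \bo Q^{(1)} + \cdots + \bo x_{n} \bo Q^{(n)} \loew 0 \}$, I would then prove the two inclusions $\val(\bspectra) \subseteq \spectra$ and $\spectra \subseteq \val(\bspectra)$.

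For the first inclusion, I would fix $\bo x \in \bspectra$ with $x \coloneqq \val(\bo x)$ and apply \cref{prop:semidefinite_by_minors} at orders $1$ and~$2$. The key observation is that $\bo Q_{ii}^{+}(\bo x)$ and $\bo Q_{ii}^{-}(\bo x)$ are both sums of terms of the same sign, and are therefore free of internal cancellations, so $\val(\bo Q_{ii}^{\pm}(\bo x)) = Q_{ii}^{\pm}(x)$ (the constant $K$ in front of the positive diagonal entries does not affect valuations). The inequality $\bo Q_{ii}(\bo x) = \bo Q_{ii}^{+}(\bo x) - \bo Q_{ii}^{-}(\bo x) \geq 0$ then forces $Q_{ii}^{+}(x) \geq Q_{ii}^{-}(x)$, for otherwise the leading term of $\bo Q_{ii}(\bo x)$ would come from $-\bo Q_{ii}^{-}(\bo x)$ and be negative. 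Similarly, the off-diagonal entry $\bo Q_{ij}(\bo x)$ contains only negative contributions by the Metzler property, so $\val(\bo Q_{ij}(\bo x)) = Q_{ij}(x)$ without cancellation; combining this with the previously derived bound $\val(\bo Q_{ii}(\bo x)) \leq Q_{ii}^{+}(x)$ and the minor inequality $\bo Q_{ii}(\bo x) \bo Q_{jj}(\bo x) \geq \bo Q_{ij}(\bo x)^{2}$ yields the tropical $2 \times 2$ condition $Q_{ii}^{+}(x) + Q_{jj}^{+}(x) \geq 2 Q_{ij}(x)$.

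For the reverse inclusion, I would take $x \in \spectra$ and consider the canonical lift $\bo x_k = t^{x_k}$ when $x_k > \zero$, with $\bo x_k = 0$ otherwise, so that $\val(\bo x) = x$. The goal is to show $\bo x \in \inbspectra$, whence $\bo x \in \bspectra$ by \cref{lemma:dominant_minors_of_order_two}. For the diagonal condition, the leading coefficient of $\bo Q_{ii}(\bo x)$ at the exponent $Q_{ii}^{+}(x)$ is $K \abs{S_i^{+}} - \abs{S_i^{-}}$ (when $Q_{ii}^{+}(x) = Q_{ii}^{-}(x)$) or $K \abs{S_i^{+}}$ (when the tropical inequality is strict), where $S_i^{\pm}$ index the tropical monomials attaining the maxima $Q_{ii}^{\pm}(x)$. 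Since $\abs{S_i^{+}} \geq 1$ whenever $Q_{ii}^{+}(x) > \zero$ and $\abs{S_i^{-}} \leq n$, the choice $K > n$ makes this coefficient positive, ensuring $\bo Q_{ii}(\bo x) \geq 0$. For the $2 \times 2$ condition, if the tropical inequality $Q_{ii}^{+}(x) + Q_{jj}^{+}(x) \geq 2 Q_{ij}(x)$ is strict then the valuation of $\bo Q_{ii}(\bo x) \bo Q_{jj}(\bo x)$ exceeds that of $(m-1)^{2} \bo Q_{ij}(\bo x)^{2}$; if it is an equality, then comparing leading coefficients yields $(K \abs{S_i^{+}} - \abs{S_i^{-}})(K \abs{S_j^{+}} - \abs{S_j^{-}}) \geq (K-n)^{2}$ on the left and at most $(m-1)^{2} n^{2}$ on the right, and the bound $(K-n)^{2} > (m-1)^{2} n^{2}$ is guaranteed by $K > mn$. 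The degenerate configurations, where $Q_{ii}^{+}(x) = \zero$ for some $i$, force $\bo Q_{ii}(\bo x) = 0$ and $\bo Q_{ij}(\bo x) = 0$ for all $j$, and are handled directly.

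The main subtlety, which motivates the whole construction, is the possible cancellation in the diagonal entries $\bo Q_{ii}(\bo x)$: these are the only entries of $\bo Q(\bo x)$ in which terms of opposite sign coexist, and without the amplification provided by the constant $K$ the canonical lift could produce a negative diagonal entry and fail to witness $x$ as the valuation of a point of $\bspectra$.
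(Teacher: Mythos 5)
Your proposal is correct and follows essentially the same strategy as the paper: you build an explicit Metzler lift $\bo Q^{(k)}$ whose positive diagonal entries are scaled by a sufficiently large constant (you take $K>mn$ where the paper takes exactly $mn$), and then sandwich $\bspectra$ between the $2\times 2$--minor sets $\inbspectra$ and $\outbspectra$, verifying $\val(\outbspectra)\subset\spectra\subset\val(\inbspectra)$ via the canonical lift $\bo x_k=t^{x_k}$ and \cref{lemma:dominant_minors_of_order_two}. The only difference is bookkeeping: you compare explicit leading coefficients at the tight exponents, while the paper works with the global inequalities $\bo Q_{ii}^-(\bo x)\le\frac1m\bo Q_{ii}^+(\bo x)$ and $\bo Q_{ij}(\bo x)^2\le\frac1{m^2}\bo Q_{ii}^+(\bo x)\bo Q_{jj}^+(\bo x)$; both routes close the argument.
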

\begin{proof}
Let us define the matrices $\bo Q^{(1)}, \dots, \bo Q^{(n)} \in \puiseux^{m \times m}$ as follows: 
\begin{itemize}
\item if $Q^{(k)}_{ij} \in \negtrop$, then we set $\bo Q^{(k)}_{ij} \coloneqq -t^{|Q^{(k)}_{ij}|}$;
\item if $Q^{(k)}_{ij} \in \postrop$ (which, under our assumptions, can happen only if $i = j$), then $\bo Q^{(k)}_{ij} \coloneqq m n t^{Q^{(k)}_{ij}}$;
\item if $Q^{(k)}_{ij} = \zero$, then $\bo Q^{(k)}_{ij} \coloneqq 0$.
\end{itemize}
Consider the spectrahedron $\bspectra \coloneqq \{\bo x \in \nnpuiseux^n \colon \bo Q(\bo x) \loew 0\}$. We claim that $\val(\bspectra) = \spectra$. 

We start with the inclusion $\val(\outbspectra) \subset \spectra$. Let $\bo x \in \outbspectra$. Observe that for all $i \neq j$, the inequality $\bo Q^+_{ii}(\bo x)\bo Q^+_{jj}(\bo x) \ge (\bo Q_{ij}(\bo x))^{2}$ holds thanks to the fact that $\bo Q_{ii} (\bo x)\geq 0$. Moreover, we have $\val(\bo Q_{ii}^{+}(\bo x)) = Q_{ii}^{+}(x)$, where $x = \val(\bo x)$. Similarly, $\val(\bo Q_{ii}^{-}(\bo x)) = Q_{ii}^{-}(x)$. As the $Q^{(k)}$ are tropical Metzler matrices, we have $\val(\bo Q_{ij}(\bo x)) = \abs{Q_{ij}(x)}$ for $i \neq j$. Since the map $\val$ is order preserving over $\nnpuiseux$, we deduce that $x \in \spectra$.

Now, let us prove the inclusion $\spectra\subset \val(\inbspectra)$. Take any $x \in \spectra$ and its lift $\bo x_k = t^{x_k}$, with the convention that $t^{\zero} = 0$. First, as noted in the previous paragraph, we have $\val(\bo Q_{ii}^{+}(\bo x)) = Q_{ii}^{+}(x)$ and $\val(\bo Q_{ii}^{-}(\bo x)) = Q_{ii}^{-}(x)$. We have chosen the matrices $\bo Q^{(k)}$ and the point $\bm x$ in such a way that 
\begin{equation}
\bo Q_{ii}^+(\bo x) = \sum_{Q_{ii}^{(k)} \in \trop^+} m n t^{Q^{(k)}_{i i} + x_k} \geq m n t^{Q_{ii}^{+}(x)} \, . \label[equation]{eq:ineq_Qii}
\end{equation}
Similarly, we have $
\bo Q_{ii}^-(\bo x) = \sum_{Q_{ii}^{(k)} \in \trop^-} t^{\abs{Q^{(k)}_{i i}} + x_k} \leq n t^{Q_{ii}^-(x)}$. 
Since $Q_{ii}^{+}(x) \geq Q_{ii}^-(x)$, we deduce that $\bo Q^{-}_{ii}(\bo x) \le \frac{1}{m}\bo Q^{+}_{ii}(\bo x)$, and so $\bo Q_{ii}(\bo x)  \ge (1 - \frac{1}{m}) \bo Q^{+}_{ii}(\bo x) \ge 0$. Second, for all $i \neq j$ we have $0 \geq \bo Q_{ij}(\bo x) \geq -n t^{\abs{Q_{ij}(x)}}$. Using~\cref{eq:ineq_Qii} and the fact that $Q_{ii}^{+}(x) \tdot Q_{jj}^{+}(x) \ge (Q_{ij}(x))^{\tdot 2}$, we obtain 
\[ \bo Q_{ij}(\bo x)^{2}  \le \frac{1}{m^2}\bo Q^{+}_{ii}(\bo x) \bo Q^{+}_{jj}(\bo x)\,.
\]
Therefore, by the previous inequalities, 
\begin{equation*}
\begin{aligned}
\bo Q_{ii}(\bo x) \bo Q_{jj}(\bo x) &- (m-1)^2\bo Q_{ij}(\bo x)^{2} \\ &\ge \Bigl(1 - \frac{1}{m}\Bigr)^{2}\bo Q^{+}_{ii}(\bo x) \bo Q^{+}_{jj}(\bo x) - (m-1)^2\bo Q_{ij}(\bo x)^{2} \ge 0 \, .
\end{aligned}
\end{equation*}
Hence $\bo x \in \inbspectra$. Therefore, by \cref{prop:semidefinite_by_minors,lemma:dominant_minors_of_order_two} we have $\val(\bspectra) \subset \val(\outbspectra) \subset \spectra \subset \val(\inbspectra) \subset \val(\bspectra)$, which implies that $\val(\bspectra) = \spectra$.
\end{proof}

\begin{figure}[t]
\centering
\begin{tikzpicture}
\begin{scope}[scale = 0.6]
      \draw[gray!60, ultra thin] (0,0) grid (9,9);
      \draw[gray!80!black, thin, ->] (0,0) -- (9,0) node[below right] {$x_1$};
      \draw[gray!90!black, thin, ->] (0,0) -- (0,9) node[below left] {$x_2$};
       \fill[fill=lightgray, fill opacity = 0.7]
        (1,4) -- (3, 4) -- (4,3) -- (4,1) -- (5,1) -- (5,2) -- (6,4) -- (8,6) -- (8, 8) -- (6,8) -- (5,7) -- (3,6) -- (1,6) -- cycle;
        \draw[very thick] (1,4) -- (3, 4) -- (4,3) -- (4,1) -- (5,1) -- (5,2) -- (6,4) -- (8,6) -- (8, 8) -- (6,8) -- (5,7) -- (3,6) -- (1,6) -- cycle;
\end{scope}
\end{tikzpicture}
\caption{A tropical Metzler spectrahedron.}\label[figure]{fig:tropical_metzler}
\end{figure}
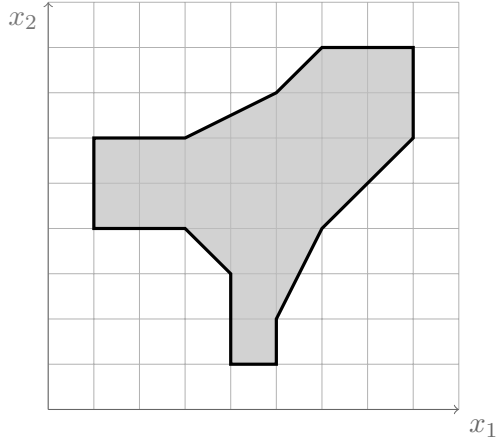

\begin{example}
If $A^{(1)}, \dots, A^{(p)}$ are matrices, then $\diag(A^{(1)}, \dots, A^{(p)})$ refers to the block diagonal matrix with blocks $A^{(s)}$ on the diagonal and all other entries equal to $-\infty$. Let $Q^{(0)}, Q^{(1)}, Q^{(2)} \in \strop^{9 \times 9}$ be symmetric tropical Metzler matrices defined as follows:
\begin{equation*}
\begin{aligned}
Q^{(0)} &\coloneqq \diag \Bigl( 8, \tminus 1, \tminus 1,
\begin{bmatrix}
\zero & \tminus 3 \\
\tminus 3 & \zero
\end{bmatrix},
\begin{bmatrix}
2 & \zero \\
\zero & 8
\end{bmatrix},
\begin{bmatrix}
3 & \zero \\
\zero & 9
\end{bmatrix} 
\Bigr) \, , \\
Q^{(1)} &\coloneqq \diag \Bigl(\tminus 0, 0, \zero,
\begin{bmatrix}
0 & \zero \\
\zero & -2
\end{bmatrix},
\begin{bmatrix}
\zero & \tminus 0 \\
\tminus 0 & \zero
\end{bmatrix},
\begin{bmatrix}
0 & \zero \\
\zero & 4
\end{bmatrix}
\Bigr) \, , \\
Q^{(2)} &\coloneqq \diag \Bigl( \tminus 0, \zero, 0,
\begin{bmatrix}
-1 & \zero \\
\zero & -1
\end{bmatrix},
\begin{bmatrix}
0 & \zero \\
\zero & 4
\end{bmatrix},
\begin{bmatrix}
\zero & \tminus 0 \\
\tminus 0 & \zero
\end{bmatrix}
\Bigr) \, .
\end{aligned}
\end{equation*}
The intersection of the tropical Metzler spectrahedron $\spectra(Q^{(0)}, Q^{(1)}, Q^{(2)})$ with the hyperplane $\{x_{0} = 0 \}$ is depicted in \cref{fig:tropical_metzler}.
\end{example}

We now focus on the main problem of characterizing the image by the valuation of a spectrahedron defined by Metzler matrices. Our goal is to show that any spectrahedron $\bspectra = \{ \bo x \in \nnpuiseux^n \colon \bo Q(\bo x) \loew 0 \}$ verifying 
$\sval(\bo Q^{(k)}) = Q^{(k)}$ is mapped to the tropical Metzler spectrahedron $\spectra$, provided that some assumptions related to the genericity of the matrices $Q^{(k)}$ and the regularity of the set $\spectra$ hold. To do so, we prove a weaker result, \cref{th:metzler_positive_orthant}, on the tropicalization of the spectrahedron restricted to the open positive orthant $\pospuiseux^{n}$.
\begin{lemma}\label[lemma]{lemma:positive_definite_lift}
Let $A \in \trop^{m \times m}$ be a symmetric matrix such that $A_{ii} \in \postrop \cup \{\zero \}$ for all $i$ and $A_{ii} \tdot A_{jj} > A_{ij}^{\tdot 2}$ for all $i < j$ such that $A_{ij} \neq \zero$. Let $\bo A \in \puiseux^{m \times m}$ be any symmetric matrix such that $\sval(\bo A) = A$. Then $\bo A$ fulfills the conditions of \cref{lemma:dominant_minors_of_order_two}. (In particular, it is positive semidefinite.)
\end{lemma}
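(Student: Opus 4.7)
The plan is to directly verify the two hypotheses of Lemma~\ref{lemma:dominant_minors_of_order_two}, namely that $\bo A$ has nonnegative diagonal entries and that $\bo A_{ii}\bo A_{jj} \ge (m-1)^{2}\bo A_{ij}^{2}$ for all $i \neq j$. The first condition is immediate: since $\sval(\bo A_{ii}) = A_{ii} \in \postrop \cup \{\zero\}$, the sign of $\bo A_{ii}$ is nonnegative, hence $\bo A_{ii} \geq 0$.

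For the off-diagonal condition, I would split into two cases according to whether $A_{ij} = \zero$ or not. If $A_{ij} = \zero$, then $\sval(\bo A_{ij}) = \zero$ forces $\bo A_{ij} = 0$, so the inequality reduces to $\bo A_{ii}\bo A_{jj} \ge 0$, which follows from the first step. If $A_{ij} \neq \zero$, then the hypothesis $A_{ii} \tdot A_{jj} > A_{ij}^{\tdot 2}$ requires $A_{ii}\tdot A_{jj} \neq \zero$; combined with $A_{ii}, A_{jj} \in \postrop \cup \{\zero\}$, this forces $A_{ii}, A_{jj} \in \postrop$, so $\bo A_{ii}$ and $\bo A_{jj}$ are strictly positive Puiseux series with valuations $A_{ii}$ and $A_{jj}$.

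Now I would compare valuations: by~\cref{e-val2}, $\val(\bo A_{ii}\bo A_{jj}) = A_{ii} + A_{jj}$, while $\val((m-1)^{2}\bo A_{ij}^{2}) = 2\abs{A_{ij}}$ (the scalar $(m-1)^{2}$ has valuation $0$). The hypothesis $A_{ii}\tdot A_{jj} > A_{ij}^{\tdot 2}$ is exactly the classical inequality $A_{ii} + A_{jj} > 2\abs{A_{ij}}$. Thus $\bo A_{ii}\bo A_{jj}$ is a positive series whose valuation strictly exceeds that of the nonnegative series $(m-1)^{2}\bo A_{ij}^{2}$. The leading terms therefore do not cancel in the difference, and the leading coefficient of $\bo A_{ii}\bo A_{jj} - (m-1)^{2}\bo A_{ij}^{2}$ coincides with the (strictly positive) leading coefficient of $\bo A_{ii}\bo A_{jj}$. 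By definition of the order on $\puiseux$, this yields $\bo A_{ii}\bo A_{jj} > (m-1)^{2}\bo A_{ij}^{2}$.

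Both hypotheses of~\cref{lemma:dominant_minors_of_order_two} being satisfied, that lemma yields the positive semidefiniteness of $\bo A$. There is essentially no obstacle here: the content of the proof is simply the observation that a strict tropical inequality lifts to a strict inequality at the level of Puiseux series whenever the dominant side is known to be positive, which is the standard bridge provided by~\cref{lemma:homomorphism} and the properties of $\val$ recalled in~\cref{e-val,e-val2}.
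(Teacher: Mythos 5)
Your proof is correct and follows essentially the same route as the paper: verify nonnegativity of the $\bo A_{ii}$ directly from $\sval$, handle $A_{ij}=\zero$ separately (where $\bo A_{ij}=0$), and use multiplicativity of $\val$ to compare $\val(\bo A_{ii}\bo A_{jj})$ with $\val\bigl((m-1)^2\bo A_{ij}^2\bigr)$ when $A_{ij}\neq\zero$. You merely spell out two steps the paper leaves implicit — that $A_{ij}\neq\zero$ forces $A_{ii},A_{jj}\in\postrop$, and why a strict valuation inequality together with positivity of the dominant factor lifts to a strict order inequality in $\puiseux$ — which is fine and arguably clearer.
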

\begin{proof}
Since $A_{ii} \in \postrop \cup \{\zero \}$ for all $i$, we have $\bo A_{ii} \ge 0$ for all $i$. Moreover, if $A_{ij} = \zero$, then $\bo A_{ii} \bo A_{jj} \ge 0$ and if $A_{ij} \neq \zero$, then $\val(\bo A_{ii} \bo A_{jj})  = A_{ii} \tdot A_{jj} > A_{ij}^{\tdot 2} = \val((m - 1)^{2} \bo A^{2}_{ij})$. Therefore $\bo A$ fulfills the conditions of \cref{lemma:dominant_minors_of_order_two}.
\end{proof}
\begin{lemma}\label[lemma]{lemma:inclusion_of_interior}
Let $\interspectra$ be the set of points $x \in \R^{n}$ that fulfill the following two conditions:
\begin{compactitem}
\item for all $i \in [m]$ such that $Q_{ii}^{-}$ is nonzero we have $Q_{ii}^{+}(x) > Q_{ii}^{-}(x)$;
\item for all $i,j \in [m]$, $i <j$, such that $Q_{ij}$ is nonzero we have $Q_{ii}^{+}(x) \tdot Q_{jj}^{+}(x) > (Q_{ij}(x))^{\tdot 2}$.
\end{compactitem}
Then $\cl(\interspectra) \subset \val(\inbspectra \cap \pospuiseux^{n})$ for every spectrahedron $\bspectra = \{ \bo x \in \nnpuiseux^n \colon \bo Q(\bo x) \loew 0 \}$ such that $\sval(\bo Q^{(k)}) = Q^{(k)}$.
\end{lemma}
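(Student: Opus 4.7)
The plan is to first show the pointwise inclusion $\interspectra \subset \val(\inbspectra \cap \pospuiseux^{n})$ by constructing an explicit lift, and then to pass to the closure by invoking the closedness of tropical semialgebraic sets established in \cref{theorem:images_are_closed}.

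For the pointwise inclusion, fix $x \in \interspectra$ and take the canonical lift $\bo x \in \pospuiseux^{n}$ defined by $\bo x_{k} \coloneqq t^{x_{k}}$, so that $\val(\bo x) = x$. I then verify the two families of inequalities defining $\inbspectra$. For diagonal entries, the terms in $\bo Q_{ii}^{+}(\bo x)$ are all nonnegative and those in $\bo Q_{ii}^{-}(\bo x)$ are all nonpositive, so \cref{lemma:homomorphism} yields $\val(\bo Q_{ii}^{\pm}(\bo x)) = Q_{ii}^{\pm}(x)$ without cancellation. When $Q_{ii}^{-}$ is nonzero, the strict inequality $Q_{ii}^{+}(x) > Q_{ii}^{-}(x)$ from the definition of $\interspectra$ implies that the leading term of $\bo Q_{ii}(\bo x) = \bo Q_{ii}^{+}(\bo x) - \bo Q_{ii}^{-}(\bo x)$ coincides with that of $\bo Q_{ii}^{+}(\bo x)$, whence $\bo Q_{ii}(\bo x) > 0$; when $Q_{ii}^{-}$ is identically zero, all $\bo Q_{ii}^{(k)}$ are nonnegative and the inequality is immediate.

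For off-diagonal entries, the Metzler hypothesis forces every $\bo Q_{ij}^{(k)}$ to be nonpositive, so no cancellation occurs in $\bo Q_{ij}(\bo x)$ and $\val(\bo Q_{ij}(\bo x)^{2}) = (Q_{ij}(x))^{\tdot 2}$ (or $\bo Q_{ij}(\bo x)$ vanishes identically when $Q_{ij}$ is the zero tropical polynomial, in which case the required inequality is trivial). Combined with $\val(\bo Q_{ii}(\bo x) \bo Q_{jj}(\bo x)) = Q_{ii}^{+}(x) \tdot Q_{jj}^{+}(x)$, the strict tropical inequality $Q_{ii}^{+}(x) \tdot Q_{jj}^{+}(x) > (Q_{ij}(x))^{\tdot 2}$ from the definition of $\interspectra$ lifts to a strict inequality of valuations, and therefore to $\bo Q_{ii}(\bo x) \bo Q_{jj}(\bo x) > (m-1)^{2} \bo Q_{ij}(\bo x)^{2}$; the scalar constant $(m-1)^{2}$ plays no role since it is invisible to $\val$. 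This shows $\bo x \in \inbspectra \cap \pospuiseux^{n}$ and hence $x \in \val(\inbspectra \cap \pospuiseux^{n})$.

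Finally, $\val(\inbspectra \cap \pospuiseux^{n})$ is precisely the stratum of $\val(\inbspectra)$ associated with the full index set $K = [n]$, since $\bo y \in \inbspectra \subset \nnpuiseux^{n}$ has full support if and only if $\bo y \in \pospuiseux^{n}$. The set $\inbspectra$ is semialgebraic, so by \cref{theorem:images_are_closed} this stratum is closed in $\R^{n}$, and taking closures gives $\cl(\interspectra) \subset \val(\inbspectra \cap \pospuiseux^{n})$. I expect the main technical obstacle to lie in the bookkeeping of the lift step: one has to keep track of the degenerate cases in which $Q_{ii}^{-}$ or $Q_{ij}$ is the zero tropical polynomial, and to use the Metzler sign pattern consistently to rule out unwanted cancellations when translating strict tropical inequalities into strict Puiseux inequalities.
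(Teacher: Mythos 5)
Your proof is correct and follows essentially the same route as the paper's: establish the pointwise inclusion $\interspectra \subset \val(\inbspectra \cap \pospuiseux^n)$ by exhibiting a lift and verifying the defining inequalities of $\inbspectra$ term by term, then pass to the closure via \cref{theorem:images_are_closed}. The only cosmetic difference is that the paper works with an arbitrary lift $\bo x \in \val^{-1}(x) \cap \nnpuiseux^n$ (which, since $x \in \R^n$, is automatically in $\pospuiseux^n$) and packages the final verification through \cref{lemma:positive_definite_lift}, whereas you choose the canonical monomial lift $\bo x_k = t^{x_k}$ and verify the conditions of \cref{lemma:dominant_minors_of_order_two} directly; the content of the two arguments is identical.
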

\begin{proof}
By \cref{theorem:images_are_closed}, the set $\val(\inbspectra \cap \pospuiseux^{n})$ is closed. Therefore, it is enough to prove that $\interspectra \subset \val(\inbspectra)$. Fix any $x \in \interspectra$ and take any lift $\bo x \in \val^{-1}(x) \cap \nnpuiseux^{n}$. Let $A \coloneqq \sval(\bo Q(\bo x))$. For any $i$ such that $Q_{ii}^{-}$ is nonzero we have $\val(\bo Q^{+}_{ii}(\bo x)) = Q_{ii}^{+}(x) > Q_{ii}^{-}(x) = \val(\bo Q^{-}_{ii}(\bo x))$. Therefore $A_{ii} = \sval(\bo Q_{ii}(\bo x)) = Q_{ii}^{+}(x) \in \postrop \cup \{ \zero\}$ for all $i$ (even if $Q_{ii}^{-}$ is zero). Furthermore, we have $A_{ij} = Q_{ij}(x)$ for any $i < j$. Therefore, for any $i < j$ such that $A_{ij} \neq -\infty$, we have $A_{ii} \tdot A_{jj} > A_{ij}^{\tdot 2}$. Hence, by \cref{lemma:positive_definite_lift}, $\bo Q(\bo x)$ fulfills the conditions of \cref{lemma:dominant_minors_of_order_two}. In other words, $\bo x \in \inbspectra$ and $x \in \val(\inbspectra \cap \pospuiseux^{n})$.
\end{proof}

\begin{assumption}\label[assumption]{assumption:nondeg_minors}
We suppose that for every matrix $Q^{(k)}$ and every pair $i \neq j$  such that $Q^{(k)}_{ii}$ and $Q^{(k)}_{jj}$ belong to $\postrop$ the inequality $Q^{(k)}_{ii} + Q^{(k)}_{jj} \neq 2\abs{Q^{(k)}_{ij}}$ holds.
\end{assumption}
We point out that \cref{assumption:nondeg_minors} can be interpreted in terms of the nonsingularity of some (tropical) minors of order $2$ of the matrices $Q^{(k)}$. 

\begin{theorem}\label[theorem]{th:metzler_positive_orthant}
Let $\bspectra = \{ \bo x \in \nnpuiseux^n \colon \bo Q(\bo x) \loew 0 \}$ be a spectrahedron described by Metzler matrices $\bo Q^{(1)}$, \dots, $\bo Q^{(n)}$ such that $\sval(\bo Q^{(k)}) = Q^{(k)}$. Suppose that \cref{assumption:nondeg_minors} holds and that the set $\spectra(Q^{(1)}, \dots, Q^{(n)}) \cap \R^{n}$ is regular. Then 
\[
\val(\bspectra \cap \pospuiseux^{n}) = \spectra(Q^{(1)}, \dots, Q^{(n)}) \cap \R^{n} \, .
\]
\end{theorem}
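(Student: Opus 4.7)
The plan is to establish the two inclusions separately, leveraging the sandwich $\inbspectra \subset \bspectra \subset \outbspectra$ provided by \cref{prop:semidefinite_by_minors} and \cref{lemma:dominant_minors_of_order_two}.

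For the inclusion $\val(\bspectra \cap \pospuiseux^n) \subset \spectra \cap \R^n$, I would start from $\bspectra \subset \outbspectra$ and reuse the argument from the first part of the proof of \cref{proposition:tropical_metzler_spectrahedron}. Namely, given $\bo x \in \outbspectra$, the Metzler property of the $\bo Q^{(k)}$'s combined with $\sval(\bo Q^{(k)}) = Q^{(k)}$ allows one to compute $\val(\bo Q_{ii}(\bo x))$ and $\val(\bo Q_{ij}(\bo x))$ via \cref{lemma:homomorphism}, and to conclude that $\val(\bo x)$ satisfies both~\cref{eq:first_kind,eq:second_kind}, i.e., $\val(\bo x) \in \spectra$. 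Intersecting with $\pospuiseux^n$, whose valuation image is $\R^n$, yields the desired inclusion.

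For the reverse inclusion, the key step is to rewrite $\spectra \cap \R^n$ as the positive support $\positivesupport(\{P_i\}, \{P_{ij}\})$ of the signed tropical polynomials $P_i$ and $P_{ij}$, where $P_i$ is the signed polynomial with positive-part $Q_{ii}^+$ and negative-part $Q_{ii}^-$, and $P_{ij}$ has positive-part $Q_{ii}^+ \tdot Q_{jj}^+$ and negative-part $(Q_{ij})^{\tdot 2}$. By construction, the defining inequalities of $\spectra$ correspond exactly to $P_i^+ \geq P_i^-$ and $P_{ij}^+ \geq P_{ij}^-$. The regularity hypothesis then says that the support of $\poscomplex(\{P_i\}, \{P_{ij}\})$, which coincides with $\positivesupport(\{P_i\}, \{P_{ij}\}) = \spectra \cap \R^n$, is regular, so \cref{lemma:purity_closes_strict} will give $\spectra \cap \R^n = \cl(\strictlypositivesupport(\{P_i\}, \{P_{ij}\}))$. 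A direct inspection of definitions shows $\strictlypositivesupport(\{P_i\}, \{P_{ij}\}) \subset \interspectra$, since the strict tropical positivity of every $P_i$ and $P_{ij}$ entails in particular the strict versions required in $\interspectra$ (which only demands strict inequality for the nontrivial constraints). Combined with the chain $\cl(\interspectra) \subset \val(\inbspectra \cap \pospuiseux^n) \subset \val(\bspectra \cap \pospuiseux^n)$ granted by \cref{lemma:inclusion_of_interior}, this finishes the proof.

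The main subtlety is ensuring the signed tropical polynomials $P_{ij}$ are well-defined as elements of the formalism of \cref{section:tropical_algebra}, where each monomial carries a single signed coefficient. \cref{assumption:nondeg_minors} plays its role here: it rules out the equality $Q^{(k)}_{ii} + Q^{(k)}_{jj} = 2\abs{Q^{(k)}_{ij}}$ with $Q^{(k)}_{ii}, Q^{(k)}_{jj} \in \postrop$, which would produce a formal cancellation in $P_{ij}$ between the $X_k^{\tdot 2}$-monomial coefficient arising from $Q_{ii}^+ \tdot Q_{jj}^+$ and the one arising from $(Q_{ij})^{\tdot 2}$, breaking the identification between $(P_{ij}^+, P_{ij}^-)$ and the spectrahedral data that is needed to apply \cref{lemma:purity_closes_strict}.
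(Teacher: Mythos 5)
Your proposal follows the same route as the paper's proof: the forward inclusion via $\bspectra \subset \outbspectra$ and the order-preservation of $\val$, and the reverse inclusion via the chain $\cl(\interspectra) \subset \val(\inbspectra \cap \pospuiseux^n)$ from \cref{lemma:inclusion_of_interior} together with \cref{lemma:purity_closes_strict} applied to a suitable rewriting of $\spectra \cap \R^n$ as a positive support. All the ingredients and their order of use match.

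The place where your proposal is imprecise is the construction of the $P_{ij}$ for $i<j$. As literally written, a formal expression whose ``positive part'' is $Q_{ii}^{+} \tdot Q_{jj}^{+}$ and whose ``negative part'' is $(Q_{ij})^{\tdot 2}$ is not a signed tropical polynomial in the sense of \cref{section:tropical_algebra}, because the monomial $X_k^{\tdot 2}$ can occur on \emph{both} sides (this happens whenever $Q^{(k)}_{ii},Q^{(k)}_{jj} \in \postrop$ and $Q^{(k)}_{ij} \neq \zero$, regardless of \cref{assumption:nondeg_minors}). The role of \cref{assumption:nondeg_minors} is not to ``rule out the formal cancellation'' and thereby eliminate such collisions; collisions can and do happen. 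Rather, the assumption guarantees that in each collision one coefficient strictly dominates the other, which is what lets you \emph{resolve} the collision by keeping only the dominant side and discarding the smaller one, without changing either the strict or the non-strict inequality's solution set (this is the elementary observation that $\max(x,\alpha+y) > \max(x',\beta+y)$ is equivalent to $\max(x,\alpha+y)>x'$ when $\alpha>\beta$, and to $x>\max(x',\beta+y)$ when $\beta>\alpha$). This is exactly what the paper does via the explicit case split in the definition of the coefficient $\alpha_k$. You should also check, as the paper does, that one restricts to the index set where $Q^{-}_{ij}$ is a nonzero tropical polynomial, so that each $P_{ij}$ fed into \cref{lemma:purity_closes_strict} is indeed a nonzero polynomial with nonempty $\psupport^{\pm}$.
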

\begin{proof}
Let $\interspectra$ be defined as in \cref{lemma:inclusion_of_interior}. Using the same arguments as in the proof of \cref{proposition:tropical_metzler_spectrahedron}, we can show that $\val(\outbspectra) \subset \spectra$, and subsequently, $\val(\outbspectra \cap \pospuiseux^{n}) \subset \spectra \cap \R^{n}$. Then, by \cref{lemma:inclusion_of_interior}, it is enough to show that $\cl(\interspectra) = \spectra \cap \R^{n}$. Observe that the inequalities defining $\spectra$ such that $Q^-_{ij}$ ($i \leq j$) is the zero tropical polynomial are trivially satisfied. Therefore, $\spectra$ can be expressed as the set of points $x \in \trop^n$ verifying:
\begin{itemize}
\item for all $i \in [m]$ such that $Q^-_{ii}$ is nonzero, $Q^+_{ii}(x) \geq Q^-_{ii}(x)$;
\item for all $i, j \in [m]$, $i < j$, such that $Q_{ij}$ (or, equivalently, $Q^-_{ij}$) is nonzero,  $Q_{ii}^{+}(x) \tdot Q_{jj}^{+}(x) \ge (Q_{ij}(x))^{\tdot 2}$.
\end{itemize}
We denote by $\Xi$ the set of $(i,j) \in [m] \times [m]$ such that $i \leq j$ and $Q^-_{ij}$ is nonzero. Since $\spectra \cap \R^{n}$ is supposed to be regular, we propose to use \cref{lemma:purity_closes_strict}, and thus, to exhibit nonzero tropical polynomials $P_{ij}$ such that
\begin{align}
\spectra \cap \R^{n} = \{x \in \R^{n} \colon \forall (i,j) \in \Xi, P_{ij}^{+}(x) \ge P_{ij}^{-}(x) \} \, , \label[equation]{eq:closure} \\
\shortintertext{and}
\interspectra = \{x \in \R^{n} \colon \forall (i,j) \in \Xi, P_{ij}^{+}(x) > P_{ij}^{-}(x) \} \, . \label[equation]{eq:interior}
\end{align}
In other words, we want to express the inequalities of the form $Q_{ii}^{+}(x) > Q_{ii}^{-}(x)$ and $Q_{ii}^{+}(x) \tdot Q_{jj}^{+}(x) > (Q_{ij}(x))^{\tdot 2}$ as tropical polynomial inequalities in which no term appears both on the left- and on the right-hand side. The inequalities of the first kind already satisfy this condition, and it suffices to set $P_{ii} \coloneqq Q_{ii}$ for all $i \in [m]$ such that $(i,i) \in \Xi$. In contrast, we have to transform the inequalities of the second kind into equivalent contraints of the form $P_{ij}^{+}(x) > P_{ij}^{-}(x)$, where $(i,j) \in \Xi$ and $i < j$. To this end, we use \cref{assumption:nondeg_minors}. First, observe that the functions $(Q_{ij}(x))^{\tdot 2}$ and $\tsum_{k}(Q^{(k)}_{ij} \tdot x_{k})^{\tdot 2}$ are equal. Therefore, we can replace the inequalities $Q_{ii}^{+}(x) \tdot Q_{jj}^{+}(x) > (Q_{ij}(x))^{\tdot 2}$ by $Q_{ii}^{+}(x) \tdot Q_{jj}^{+}(x) > \tsum_{k}(Q^{(k)}_{ij} \tdot x_{k})^{\tdot 2}$. Now, we can define a formal subtraction of these tropical expressions. More precisely, for every $(i,j) \in \Xi$ such that $i < j$, we define 
\[
P_{ij} \coloneqq \biggl(\tsum_{\substack{k \neq l\\ Q_{ii}^{(k)}, \,  Q_{jj}^{(l)} \in \postrop}} (Q_{ii}^{(k)} \tdot Q_{jj}^{(l)}) \tdot (X_k \tdot X_l) \biggr) \tplus \biggl(\tsum_{\substack{Q_{ii}^{(k)}, \, Q_{jj}^{(k)} \in \postrop\\\text{or}\; \abs{Q^{(k)}_{ij}} \neq -\infty}} \alpha_k \tdot X_k^{\tdot 2} \biggr) \, ,
\]
where $\alpha_k$ is given by:
\[
\alpha_k \coloneqq
\begin{cases}
Q_{ii}^{(k)} \tdot Q_{jj}^{(k)} & \text{if} \; Q_{ii}^{(k)}, Q_{jj}^{(k)} \in \postrop \; \text{and} \; Q^{(k)}_{ii} + Q^{(k)}_{jj} > 2\abs{Q^{(k)}_{ij}}\, , \\
\tminus (Q^{(k)}_{ij})^{\tdot 2} & \text{otherwise}. 
\end{cases}
\]
Recall that any inequality of the form $\max(x,\alpha +y) > \max(x', \beta + y)$ is equivalent to $\max(x, \alpha + y) > x'$ if $\alpha > \beta$, and to $x > \max(x', \beta + y)$ if $\beta > \alpha$. Therefore, \cref{assumption:nondeg_minors} ensures that $P_{ij}^{+}(x) > P_{ij}^{-}(x)$ is equivalent to $Q_{ii}^{+}(x) \tdot Q_{jj}^{+}(x) > \tsum_{k}(Q^{(k)}_{ij} \tdot  x_{k})^{\tdot 2}$. The same applies to the nonstrict counterparts of these inequalities. We conclude that \cref{eq:closure,eq:interior} are satisfied.
\end{proof}

\begin{theorem}\label[theorem]{corollary:metzler_regularity}
Let $\bspectra = \{ \bo x \in \nnpuiseux^n \colon \bo Q(\bo x) \loew 0 \}$ be a spectrahedron described by Metzler matrices $\bo Q^{(1)}$, \dots, $\bo Q^{(n)}$ such that $\sval(\bo Q^{(k)}) = Q^{(k)}$. Suppose that \cref{assumption:nondeg_minors} is fulfilled and that every stratum of the set 
$\spectra(Q^{(1)}, \dots, Q^{(n)})$ is regular. Then
\[
\val(\bspectra) = \spectra(Q^{(1)}, \dots, Q^{(n)}) \, .
\]
\end{theorem}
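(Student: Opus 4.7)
The plan is to reduce the statement to \cref{th:metzler_positive_orthant} applied to each stratum separately. Fix a subset $K \subset [n]$ and let $\bo Q^{K}(\bo y) \coloneqq \sum_{k \in K} \bo y_{k} \bo Q^{(k)}$ for $\bo y \in \nnpuiseux^{K}$; similarly let $\spectra_{K} \coloneqq \spectra((Q^{(k)})_{k \in K})$ denote the tropical Metzler spectrahedron described by the submatrices, viewed as a subset of $\trop^{K}$. The pencil $\bo Q^{K}$ is again Metzler with $\sval(\bo Q^{(k)}) = Q^{(k)}$ for $k \in K$. The first step is to check that \cref{assumption:nondeg_minors} for $(Q^{(k)})_{k \in [n]}$ trivially restricts to the subfamily $(Q^{(k)})_{k \in K}$.

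Next I would prove that the stratum of $\spectra$ associated with $K$ coincides with $\spectra_{K} \cap \R^{K}$, and similarly that the stratum of $\val(\bspectra)$ associated with $K$ coincides with $\val(\bspectra^{K} \cap \pospuiseux^{K})$, where $\bspectra^{K} \coloneqq \{\bo y \in \nnpuiseux^{K} \colon \bo Q^{K}(\bo y) \loew 0\}$. Both statements are routine: for a point $x \in \trop^{n}$ of support $K$, all monomials in $Q_{ij}^{\pm}$ involving a variable $X_{l}$ with $l \notin K$ evaluate to $\zero$, so the defining inequalities of $\spectra$ at $x$ reduce to those of $\spectra_{K}$ at $x|_{K}$. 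Similarly, a point $\bo x \in \bspectra$ with $\bo x_{l} = 0$ for $l \notin K$ corresponds exactly to a point $\bo y = \bo x|_{K} \in \bspectra^{K}$ by homogeneity of the pencil.

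Given these identifications, the hypothesis that every stratum of $\spectra$ is regular translates into the regularity of $\spectra_{K} \cap \R^{K}$ for every $K$. The assumptions of \cref{th:metzler_positive_orthant} are therefore satisfied by the pencil $\bo Q^{K}$, yielding
\[
\val(\bspectra^{K} \cap \pospuiseux^{K}) = \spectra_{K} \cap \R^{K} \, .
\]
Translating back, the $K$-stratum of $\val(\bspectra)$ equals the $K$-stratum of $\spectra$. Taking the union over all $K \subset [n]$ (including the trivial case $K = \emptyset$, where both sides reduce to the origin of $\trop^{n}$ by homogeneity) gives $\val(\bspectra) = \spectra$.

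The main bookkeeping obstacle is verifying that the stratum of $\val(\bspectra)$ associated with $K$ is exactly $\val(\bspectra^{K} \cap \pospuiseux^{K})$. The nontrivial inclusion is that a point $\bo x \in \bspectra$ whose valuation has support $K$ can be replaced, without changing the $K$-coordinates of its valuation, by a point with zero entries outside $K$. This is where the homogeneity of the Metzler pencil matters: restricting $\bo x$ by setting $\bo x_{l} = 0$ for $l \notin K$ produces a point still lying in $\bspectra$, because the matrix $\bo Q(\bo x)$ is diagonally dominant in the relevant sense once we factor out the positive leading terms, and the surviving pencil $\bo Q^{K}(\bo x|_{K})$ retains the Loewner inequality. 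Once this restriction lemma is in place, the rest of the argument is a clean stratum-by-stratum application of \cref{th:metzler_positive_orthant}.
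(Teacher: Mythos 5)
Your approach is the same as the paper's: stratify by support, identify each stratum of $\spectra$ with $\spectra_K \cap \R^K$ and each stratum of $\val(\bspectra)$ with $\val(\bspectra^K \cap \pospuiseux^K)$, then invoke \cref{th:metzler_positive_orthant} on each stratum, exactly as the paper does in its one-paragraph proof.

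However, the last paragraph manufactures a difficulty that does not exist under the paper's definitions. A point $x \in (\Gamma \cup \{-\infty\})^n$ has support $K$ precisely when $x_k \neq -\infty$ for $k \in K$ and $x_l = -\infty$ for $l \notin K$; consequently a point $\bo x \in \bspectra$ with $\val(\bo x)$ of support $K$ \emph{already} satisfies $\bo x_l = 0$ for all $l \notin K$, so there is nothing to ``replace.'' The identification of the $K$-stratum of $\val(\bspectra)$ with $\val(\bspectra^K \cap \pospuiseux^K)$ is then a tautology from the homogeneity of the pencil ($\bo Q(\bo x) = \bo Q^K(\bo x|_K)$). The diagonal-dominance argument you sketch to close the alleged gap is both unnecessary and unsound as stated: $\bo Q(\bo x) \loew 0$ does not imply any kind of diagonal dominance. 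This does not invalidate the proof, since the gap it purports to fill is vacuous, but the paragraph should be dropped or replaced by the one-line observation above.
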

\begin{proof}
Fix a nonempty subset $K \subset [n]$. Observe that the stratum of $\val(\bspectra)$ associated with $K$ is equal to $\val(\bspectra^{(K)} \cap \pospuiseux^{K})$, where $\bspectra^{(K)}$ is the spectrahedron described by $(\bo Q^{(k)})_{k \in K}$. Similarly, the stratum of $\spectra$ associated with $K$ is equal to $\spectra^{(K)} \cap \R^{K}$, where $\spectra^{(K)}$ denotes the tropical Metzler spectrahedron described by $(Q^{(k)})_{k \in K}$. Therefore, we obtain the claim by applying \cref{th:metzler_positive_orthant} to every stratum.
\end{proof}

\subsection{Non-Metzler spectrahedra}\label[section]{subsection:nonmetzler}

In this section, we abandon the Metzler assumption that was imposed in the previous section. 
Let $Q^{(1)}$, \dots, $Q^{(n)} \in \strop^{m \times m}$ be symmetric tropical matrices. We introduce the set $\spectra(Q^{(1)}, \dots, Q^{(n)})$ (or simply $\spectra$) of points $x \in \trop^{n}$ that fulfill the following two conditions:
\begin{itemize}
\item for all $i \in [m]$, $Q_{ii}^{+}(x) \ge Q_{ii}^{-}(x)$;
\item for all $i,j \in [m]$, $i <j$, we have $Q_{ii}^{+}(x) \tdot Q_{jj}^{+}(x) \ge (Q^+_{ij}(x) \tplus Q^-_{ij}(x))^{\tdot 2}$ or $Q_{ij}^{+}(x) = Q_{ij}^{-}(x)$.
\end{itemize}
We point out that this generalizes \cref{def:metzler_spectrahedron} to the case of non-Metzler matrices. 
More precisely, if the matrices $Q^{(1)}$, \dots, $Q^{(n)} \in \strop^{m \times m}$ are Metzler, then $Q_{ij}^{+}(x) = -\infty$ and $(Q^+_{ij}(x) \tplus Q^-_{ij}(x))^{\tdot 2} = (Q_{ij}(x))^{\tdot 2}$ for all $i \neq j$ and $x \in \trop^{n}$. However, if the matrices $Q^{(k)}$ are not Metzler, then we have two possibilities. If $Q_{ij}$ does not vanish on $x$, then $(Q^+_{ij}(x) \tplus Q^-_{ij}(x))^{\tdot 2} = (Q_{ij}(x))^{\tdot 2}$ and we recover the same kind of inequality as in the Metzler case. On the other hand, if $Q_{ij}$ vanishes on $x$, then $Q_{ij}^{+}(x) = Q_{ij}^{-}(x)$ and we ignore the corresponding inequality.

We do \emph{not} claim that the set $\spectra$ defined above is a tropical spectrahedron. 
In this work we only show that this is true under some additional assumptions (which are generically fulfilled as shown in \cref{section:generic}). First, we need some notation. For every subset
\[
\Sigma \subset \{(i,j) \in [m]^{2} \colon i < j\}
\]
we denote
\[
\Sigma^{\complement} \coloneqq \{ (i,j) \in [m]^{2} \colon i < j, (i,j) \notin \Sigma \} \, .
\]
For every $\Sigma$ and every $\Diamond \in \{\le, \ge \}^{\Sigma^{\complement}}$ we define $\spectra_{\Sigma, \Diamond} (Q^{(1)}, \dots, Q^{(n)})$ (or $\spectra_{\Sigma, \Diamond}$ for short) as the set of all $x \in \trop^{n}$ such that
\begin{itemize}
\item for all $i \in [m]$, $Q_{ii}^{+}(x) \ge Q_{ii}^{-}(x)$;
\item for all $i,j \in [m]$, $i <j$, $(i,j) \in \Sigma$, $Q_{ii}^{+}(x) \tdot Q_{jj}^{+}(x) \ge (Q^+_{ij}(x) \tplus Q^-_{ij}(x))^{\tdot 2}$;
\item for all $i,j \in [m]$, $i <j$, $(i,j) \in \Sigma^{\complement}$, $Q_{ij}^{+}(x) \mathrel{\Diamond}_{(i,j)} Q_{ij}^{-}(x)$.
\end{itemize}

Observe that we have the equality
\begin{equation}\label[equation]{eq:decomposition}
\spectra = \bigcup_{\Sigma} \bigcap_{\Diamond} \spectra_{\Sigma, \Diamond} \, ,
\end{equation}
where the intersection goes over every $\Diamond \in \{\le, \ge \}^{\Sigma^{\complement}}$ and the union goes over every $\Sigma \subset \{(i,j) \in [m]^{2} \colon i < j\}$. Moreover, we note that every set $\spectra_{\Sigma, \Diamond}$ is a tropical Metzler spectrahedron (we give a detailed proof of this fact in the proof of \cref{corollary:genericity}).

Let $\bo Q^{(1)}, \dots, \bo Q^{(n)} \in \puiseux^{m \times m}$ be any symmetric matrices such that $\sval(\bo Q^{(k)}) = Q^{(k)}$, and $\bspectra \coloneqq \{ \bo x \in \nnpuiseux^n \colon \bo Q(\bo x) \loew 0 \}$ be the associated spectrahedron. We will use the following observation, which already appeared in the proof of \cite[Corollary~3.6]{tropical_simplex} on the tropicalization of polyhedra.
We denote by $\conv(S)$ the convex hull of the set $S \subset \puiseux^n$. 
\begin{lemma}\label[lemma]{lemma:zero_in_convex_hull}
Let $\bo a^{(1)}, \ldots, \bo a^{(p)} \in \puiseux^{n}$ and $\bo b \in \puiseux^{p}$. Suppose that for every sign pattern $\delta \in \{+1, -1\}^{p}$ there is a point $\bo x^{\delta} \in \puiseux^{n}$ such that for all $s \in [p]$ we have $\delta_{s}(\langle \bo a^{(s)}, \bo x^{\delta} \rangle - \bo b_{s}) \geq 0$. Then, there exists a point $\bo y \in \conv_{\delta} \{\bo x^{\delta} \}$ such that for all $s$ we have $\langle \bo a^{(s)}, \bo y \rangle = \bo b_{s}$.
\end{lemma}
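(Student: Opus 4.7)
The plan is to reduce the statement to a purely convex-geometric claim in $\puiseux^p$ and then prove it by induction on $p$. Introduce the affine map $\Phi \colon \puiseux^n \to \puiseux^p$ defined by $\Phi(\bo x)_s = \langle \bo a^{(s)}, \bo x \rangle - \bo b_s$, and set $\bo y^\delta := \Phi(\bo x^\delta) \in \puiseux^p$. Since $\Phi$ is affine, it commutes with convex combinations: a point $\bo y \in \conv_\delta\{\bo x^\delta\}$ satisfies $\langle \bo a^{(s)}, \bo y \rangle = \bo b_s$ for all $s$ if and only if $\Phi(\bo y) = 0$, which in turn is equivalent to $0 \in \conv_\delta\{\bo y^\delta\}$. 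The hypothesis on $\bo x^\delta$ translates immediately into the sign conditions $\delta_s \bo y^\delta_s \geq 0$ for all $s \in [p]$ and all $\delta \in \{\pm 1\}^p$.

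Hence it suffices to establish the following lemma: for every $p \geq 1$ and every family $(\bo y^\delta)_{\delta \in \{\pm 1\}^p}$ of points of $\puiseux^p$ satisfying $\delta_s \bo y^\delta_s \geq 0$ for all $s$ and $\delta$, the origin lies in $\conv_\delta\{\bo y^\delta\}$. I will prove this by induction on $p$. The base case $p = 1$ is immediate: one has $\bo y^{+} \geq 0$ and $\bo y^{-} \leq 0$, and if they do not coincide at zero, $0$ is an explicit convex combination $\lambda \bo y^+ + (1-\lambda) \bo y^-$ with $\lambda = -\bo y^- / (\bo y^+ - \bo y^-) \in [0,1]$.

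For the inductive step, fix $p \geq 2$ and group the $2^p$ points by their first $p - 1$ coordinates of the index. For each $\delta' \in \{\pm 1\}^{p-1}$, the last coordinates satisfy $\bo y^{(\delta', +1)}_p \geq 0$ and $\bo y^{(\delta', -1)}_p \leq 0$. By the base case applied coordinate-wise, there exists $\lambda_{\delta'} \in [0,1]$ so that the convex combination
\[
\bo z^{\delta'} := \lambda_{\delta'} \bo y^{(\delta', +1)} + (1 - \lambda_{\delta'}) \bo y^{(\delta', -1)}
\]
has $p$-th coordinate equal to $0$. For $s < p$, both $\bo y^{(\delta', \pm 1)}_s$ have the same sign $\delta'_s$ (in the weak sense), so $\delta'_s \bo z^{\delta'}_s \geq 0$. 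Thus the first $p - 1$ coordinates of $(\bo z^{\delta'})_{\delta' \in \{\pm 1\}^{p-1}}$ form a family in $\puiseux^{p-1}$ satisfying the inductive hypothesis in dimension $p - 1$. Therefore $0$ lies in their convex hull in $\puiseux^{p-1}$, and since every $\bo z^{\delta'}$ already has its $p$-th coordinate equal to zero, the same convex combination yields $0$ in $\puiseux^p$. As each $\bo z^{\delta'}$ belongs to $\conv_\delta\{\bo y^\delta\}$, we conclude $0 \in \conv_\delta\{\bo y^\delta\}$, completing the induction.

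There is no serious obstacle here: all the work is in the reduction to the geometric statement and in correctly pairing up sign patterns differing only in the last coordinate. The one point that needs attention is to verify that this pairing preserves the remaining sign conditions, which follows from the fact that a convex combination of two numbers of a common sign retains that sign. Tracking the construction, the resulting $\bo y$ is an explicit convex combination of the $\bo x^\delta$'s, as required.
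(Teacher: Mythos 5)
Your proof is correct. The overall strategy is the same as the paper's — induction on $p$, with the $p=1$ base case being the one-dimensional intermediate-value/interpolation argument — but the inductive step is decomposed in the opposite order. The paper partitions $\{\pm 1\}^{p}$ by the last sign bit into $\Delta^{+}$ and $\Delta^{-}$, applies the inductive hypothesis to each half to get two points simultaneously satisfying the first $p-1$ equalities, and only then interpolates between those two points to kill the last coordinate (using that each half uniformly satisfies a weak inequality in coordinate $p$). You instead pair up the $2^{p}$ patterns by their common prefix of length $p-1$, interpolate within each of the $2^{p-1}$ pairs first to kill coordinate $p$, and then make a single recursive call on the resulting family of $2^{p-1}$ points. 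Both decompositions unwind to the same nested convex combination and require the same key observation (a convex combination of two numbers of a common weak sign keeps that sign), exercised either on the last coordinate (paper) or on the first $p-1$ coordinates (you). Your preliminary reduction via the affine map $\Phi(\bo x)_s = \langle \bo a^{(s)}, \bo x\rangle - \bo b_s$ to the statement ``$0 \in \conv_{\delta}\{\bo y^{\delta}\}$'' is a nice clarifying touch that the paper leaves implicit; it works because an affine map commutes with $\conv$, and makes the sign bookkeeping visibly coordinate-local.
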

\begin{proof}
If $p = 1$ then we have two points $\bo x^{(1)}$ and $\bo x^{(2)}$ such that $\langle \bo a^{(1)}, \bo x^{(1)} \rangle \geq \bo b_{1}$ and $\langle \bo a^{(1)}, \bo x^{(2)} \rangle \leq \bo b_{1}$. Therefore, there exists $\bo \lambda$ such that $0 \leq \bo \lambda \leq 1$ and  $\langle \bo a^{(1)}, \bo \lambda \bo x^{(1)} + (1- \bo \lambda) \bo x^{(2)} \rangle = \bo b_{1}$. This completes the proof for $p = 1$. 

Suppose that the claim is true for $p$. We will prove it for $p + 1$. Take 
\[
\Delta^{+} \coloneqq \{ \delta \in \{+1, -1 \}^{p+1} \colon \text{last entry of $\delta$ is equal to $+1$}\}
\]
and 
\[
\Delta^{-} \coloneqq \{ \delta \in \{+1, -1 \}^{p+1} \colon \text{last entry of $\delta$ is equal to $-1$}\} \, .
\]
By the induction hypothesis, there exists a point $\bo x^{(1)} \in \conv_{\delta \in \Delta^{+}} \{ \bo x^{\delta}\}$ such that  $\langle \bo a^{(s)}, \bo x^{(1)} \rangle = \bo b_{s}$ for all $s \le p$. Moreover, we have $\langle \bo a^{(p+1)}, \bo x^{\delta} \rangle \geq \bo b_{p+1}$ for all $\delta \in \Delta^{+}$ and therefore $\langle \bo a^{(p+1)}, \bo x^{(1)} \rangle \geq \bo b_{p+1}$. Analogously, there exists a point $\bo x^{(2)} \in \conv_{\delta \in \Delta^{-}} \{ \bo x^{\delta}\}$ such that $\langle \bo a^{(s)}, \bo x^{(2)} \rangle = \bo b_{s}$ for all $s \le p$ and $\langle \bo a^{(p+1)}, \bo x^{(2)} \rangle \leq \bo b_{p+1}$. Therefore, there is a point $\bo y \in \conv\{\bo x^{(1)}, \bo x^{(2)}\} \subset \conv_{\delta} \{ \bo x^{\delta}\}$ such that $\langle \bo a^{(p+1)}, \bo y \rangle = \bo b_{p+1}$. Furthermore, since $\langle \bo a^{(s)}, \bo x^{(1)} \rangle = \langle \bo a^{(s)}, \bo x^{(2)} \rangle = \bo b_{s}$ for all $s \le p$, we have $\langle \bo a^{(s)}, \bo y \rangle = \bo b_{s}$ for all $s \le p$.
\end{proof}

\begin{lemma}\label[lemma]{lemma:sout}
We have the inclusion $\val(\outbspectra \cap \pospuiseux^{n}) \subset \spectra \cap \R^{n}$.
\end{lemma}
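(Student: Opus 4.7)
The plan is to fix $\bo x \in \outbspectra \cap \pospuiseux^{n}$ and its valuation $x \coloneqq \val(\bo x) \in \R^n$, and then verify directly the two families of inequalities that define $\spectra$. The key observation is that, because $\bo x$ has strictly positive entries and each $\bo Q_{ij}^{\pm}$ has nonnegative coefficients, no leading-term cancellation occurs in the evaluation $\bo Q_{ij}^{\pm}(\bo x)$. Hence, by the equality case in~\cref{e-val} and \cref{lemma:homomorphism}, one obtains
\[
\val(\bo Q_{ij}^{+}(\bo x)) = Q_{ij}^{+}(x), \qquad \val(\bo Q_{ij}^{-}(\bo x)) = Q_{ij}^{-}(x) \, ,
\]
for all $i,j$ (with the convention $-\infty$ whenever the corresponding polynomial is zero).

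For the diagonal inequalities, the condition $\bo Q_{ii}(\bo x) \ge 0$ rewrites as $\bo Q_{ii}^{+}(\bo x) \ge \bo Q_{ii}^{-}(\bo x)$; since $\val$ is order-preserving on $\nnpuiseux$, this immediately yields $Q_{ii}^{+}(x) \ge Q_{ii}^{-}(x)$.

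For the off-diagonal ones, I fix $i < j$, assume that $Q_{ij}^{+}(x) \ne Q_{ij}^{-}(x)$, and aim to show $Q_{ii}^{+}(x) \tdot Q_{jj}^{+}(x) \ge (Q_{ij}^{+}(x) \tplus Q_{ij}^{-}(x))^{\tdot 2}$. Since the valuations of $\bo Q_{ij}^{+}(\bo x)$ and $\bo Q_{ij}^{-}(\bo x)$ differ, no cancellation occurs in $\bo Q_{ij}(\bo x) = \bo Q_{ij}^{+}(\bo x) - \bo Q_{ij}^{-}(\bo x)$, so the equality case in~\cref{e-val} forces $\val(\bo Q_{ij}(\bo x)) = \max(Q_{ij}^{+}(x), Q_{ij}^{-}(x)) = Q_{ij}^{+}(x) \tplus Q_{ij}^{-}(x)$. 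Applying the monotonicity of $\val$ to the inequality $\bo Q_{ii}(\bo x)\bo Q_{jj}(\bo x) \ge \bo Q_{ij}(\bo x)^{2}$ between nonnegative Puiseux series, and using multiplicativity~\cref{e-val2}, gives
\[
\val(\bo Q_{ii}(\bo x)) + \val(\bo Q_{jj}(\bo x)) \ge 2 (Q_{ij}^{+}(x) \tplus Q_{ij}^{-}(x)) \, .
\]
Finally, writing $\bo Q_{ii}(\bo x) = \bo Q_{ii}^{+}(\bo x) - \bo Q_{ii}^{-}(\bo x)$ and applying the ultrametric inequality~\cref{e-val} gives $\val(\bo Q_{ii}(\bo x)) \le \max(Q_{ii}^{+}(x), Q_{ii}^{-}(x)) = Q_{ii}^{+}(x)$ (the last equality coming from the already-proven diagonal condition), and similarly for $jj$. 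Combining these upper bounds with the previous inequality yields $Q_{ii}^{+}(x) + Q_{jj}^{+}(x) \ge 2(Q_{ij}^{+}(x) \tplus Q_{ij}^{-}(x))$, completing the proof.

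There is no genuine obstacle here; the argument is a clean bookkeeping of when the nonarchimedean inequality~\cref{e-val} is actually an equality. The only subtlety worth emphasizing is that, on the positive orthant, the values $Q_{ij}^{\pm}(x)$ are honestly realized as the valuations of $\bo Q_{ij}^{\pm}(\bo x)$, which is precisely what allows the ultrametric bound $\val(\bo Q_{ii}(\bo x)) \le Q_{ii}^{+}(x)$ to close the estimate.
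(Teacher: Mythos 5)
Your proof is correct and follows essentially the same route as the paper's. The one place where you are appreciably more explicit is the upper bound $\val(\bo Q_{ii}(\bo x)) \le \max(Q_{ii}^{+}(x), Q_{ii}^{-}(x)) = Q_{ii}^{+}(x)$, which is genuinely needed to close the estimate (the paper's ``hence'' compresses exactly this step, and equality $\val(\bo Q_{ii}(\bo x)) = Q_{ii}^{+}(x)$ can fail when $Q_{ii}^{+}(x) = Q_{ii}^{-}(x)$). You also correctly identify that the hypothesis $Q_{ij}^{+}(x) \ne Q_{ij}^{-}(x)$ is what rules out leading-term cancellation in $\bo Q_{ij}(\bo x)$ and makes $\val(\bo Q_{ij}(\bo x)) = Q_{ij}^{+}(x) \tplus Q_{ij}^{-}(x)$; in the complementary case one falls into the second disjunct in the definition of $\spectra$, which you rightly treat as automatic. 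In short, same argument, slightly more careful bookkeeping.
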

\begin{proof}
Take a point $\bo x \in \outbspectra \cap \pospuiseux^{n}$ and denote $x \coloneqq \val(\bo x)$. For every $i \in [m]$ we have $\bo Q_{ii}(\bo x) \ge 0$ and hence $Q^{+}_{ii}(x) \ge Q_{ii}^{-}(x)$. Furthermore, for every $i < j$ such that $\val(\bo Q_{ij}^{+}(\bo x)) \neq \val(\bo Q_{ij}^{-}(\bo x))$, we have $\val(\bo Q_{ij}(\bo x)) = Q^+_{ij}(x) \tplus Q^-_{ij}(x)$ and hence $Q^{+}_{ii}(x) \tdot Q^{+}_{jj}(x) \ge (Q^+_{ij}(x) \tplus Q^-_{ij}(x))^{\tdot 2}$. On the other hand, for every $i < j$ such that $\val(\bo Q_{ij}^{+}(\bo x)) = \val(\bo Q_{ij}^{-}(\bo x))$ we have $Q_{ii}^{+}(x) = Q_{jj}^{-}(x)$. In particular, $x \in \spectra \cap \R^{n}$.
\end{proof}

In \cref{lemma:inclusion_of_interior} we introduced the symbol $\interspectra$ to denote the set of all real points that fulfill the strict version of nontrivial inequalities defining a tropical Metzler spectrahedron $\spectra$. Likewise, we denote by $\interspectra_{\Sigma, \Diamond}$ the set of all points $x \in \R^{n}$ which fulfill the strict versions of (nontrivial) inequalities defining $\spectra_{\Sigma, \Diamond}$.

\begin{lemma}\label[lemma]{lemma:sin}
We have
\[
\bigcup_{\Sigma} \bigcap_{\Diamond} \cl\bigl(\interspectra_{\Sigma, \Diamond} \bigr) \subset \val(\inbspectra \cap \pospuiseux^{n}) \, .
\]
\end{lemma}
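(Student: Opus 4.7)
The plan is to combine \cref{theorem:images_are_closed} with \cref{lemma:zero_in_convex_hull}: the former lets us reduce the statement to an approximation argument, while the latter allows us to force the ``problematic'' off-diagonal entries (those indexed by $\Sigma^\complement$) to vanish. Since \cref{theorem:images_are_closed} guarantees that $\val(\inbspectra \cap \pospuiseux^n)$ is closed, it suffices to show that every $x \in \bigcap_\Diamond \cl(\interspectra_{\Sigma,\Diamond})$ is the limit of valuations of points in $\inbspectra \cap \pospuiseux^n$. Fix such an $x$, and for each $N \ge 1$ and each sign pattern $\delta \in \{+1,-1\}^{\Sigma^\complement}$, let $\Diamond(\delta) \in \{\le,\ge\}^{\Sigma^\complement}$ be the tuple assigning $\ge$ wherever $\delta_{(i,j)} = +1$ and $\le$ elsewhere. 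I would pick $x^{(\delta,N)} \in \interspectra_{\Sigma,\Diamond(\delta)}$ with $\|x^{(\delta,N)}-x\| \le 1/N$, and form the monomial lift $\bo x^{(\delta,N)}_k \coloneqq t^{x^{(\delta,N)}_k}$.

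Arguing as in the proof of \cref{proposition:tropical_metzler_spectrahedron}, the strict inequalities underlying $\interspectra_{\Sigma,\Diamond(\delta)}$ and \cref{lemma:homomorphism} ensure that $\bo Q_{ii}(\bo x^{(\delta,N)}) \ge 0$ for every $i$, that $\bo Q_{ii}(\bo x^{(\delta,N)})\bo Q_{jj}(\bo x^{(\delta,N)}) \ge (m-1)^2\bo Q_{ij}(\bo x^{(\delta,N)})^2$ for every $(i,j) \in \Sigma$, and that $\delta_{(i,j)}\bo Q_{ij}(\bo x^{(\delta,N)}) > 0$ for every $(i,j) \in \Sigma^\complement$. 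I then apply \cref{lemma:zero_in_convex_hull} to the linear forms $\bo u \mapsto \bo Q_{ij}(\bo u) = \sum_k \bo Q_{ij}^{(k)} \bo u_k$ indexed by $(i,j) \in \Sigma^\complement$, with zero right-hand sides, to obtain a point $\bo y^{(N)} \in \conv_\delta\{\bo x^{(\delta,N)}\}$ such that $\bo Q_{ij}(\bo y^{(N)}) = 0$ for every $(i,j) \in \Sigma^\complement$.

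It remains to verify that $\bo y^{(N)} \in \inbspectra \cap \pospuiseux^n$ and that $\val(\bo y^{(N)}) \to x$. The positivity of $\bo y^{(N)}$ and the nonnegativity $\bo Q_{ii}(\bo y^{(N)}) \ge 0$ follow from the linearity of the pencil and the fact that each summand $\bo Q_{ii}(\bo x^{(\delta,N)})$ is nonnegative. The $(m-1)^2$-inequality is trivial for $(i,j) \in \Sigma^\complement$, since its right-hand side vanishes. The crucial case is $(i,j) \in \Sigma$: because every $\bo Q_{ii}(\bo x^{(\delta,N)})$ is nonnegative, no cancellation occurs when summing, so $\val(\bo Q_{ii}(\bo y^{(N)})) = \max_\delta Q_{ii}^+(x^{(\delta,N)})$ (and similarly for $jj$), whereas $\val(\bo Q_{ij}(\bo y^{(N)})) \le \max_\delta \val(\bo Q_{ij}(\bo x^{(\delta,N)}))$. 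Choosing $\delta^*$ that attains this last maximum, the strict $\Sigma$-constraint at $x^{(\delta^*,N)}$ together with $\max_\delta(\cdot) \ge (\cdot)_{\delta^*}$ yields $\val(\bo Q_{ii}(\bo y^{(N)})\bo Q_{jj}(\bo y^{(N)})) > 2\val(\bo Q_{ij}(\bo y^{(N)}))$; since the left-hand side is a product of nonnegative Puiseux series, the domination at leading order implies the desired inequality, as in the argument preceding \cref{lemma:positive_definite_lift}. The convergence $\val(\bo y^{(N)}) \to x$ follows because $\sum_\delta \lambda_\delta = 1$ forces $\max_\delta \val(\lambda_\delta) = 0$, so $\val(\bo y^{(N)}_k) \in [\min_\delta x^{(\delta,N)}_k,\max_\delta x^{(\delta,N)}_k]$.

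The main obstacle will be the valuation bookkeeping in the previous paragraph, and in particular the fact that strict inequalities at each $x^{(\delta,N)}$ must be aggregated by a $\max$-over-$\delta$ argument before being compared with $\val(\bo Q_{ij}(\bo y^{(N)}))$. A secondary difficulty is the careful treatment of degenerate cases in which some of the tropical polynomials $Q_{ii}^\pm$ or $Q_{ij}^\pm$ are identically zero: one must verify that the corresponding constraints either become trivial or are excluded from the intersection $\bigcap_\Diamond \cl(\interspectra_{\Sigma,\Diamond})$, so that they do not interfere with the argument above.
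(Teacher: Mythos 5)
Your overall skeleton is the right one --- using \cref{lemma:zero_in_convex_hull} to pass to a convex hull that annihilates the $\Sigma^{\complement}$ entries is exactly the device the paper uses. However, you take an unnecessarily circuitous route and, more importantly, your argument for verifying the $\Sigma$-inequalities at the convex combination $\bo y^{(N)}$ contains a genuine gap.

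On the route: you approximate $x$ by a sequence $x^{(\delta,N)}$, take monomial lifts, and close the argument via \cref{theorem:images_are_closed}. This is extra work. \cref{lemma:inclusion_of_interior} already yields, for each $\Diamond$, a lift $\bo x^{\Diamond} \in \pospuiseux^n \cap \val^{-1}(x)$ (with valuation \emph{equal} to $x$) satisfying $\bo Q_{ii}(\bo x^{\Diamond}) \geq 0$, the $(m-1)^2$-inequality for $(i,j)\in \Sigma$, and $\bo Q_{ij}(\bo x^{\Diamond}) \mathrel{\Diamond}_{(i,j)} 0$ for $(i,j)\in\Sigma^{\complement}$; then any point of $\conv_\Diamond\{\bo x^{\Diamond}\}$ still lies in $\pospuiseux^n\cap\val^{-1}(x)$, and no limiting argument is needed.

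On the gap: after applying \cref{lemma:zero_in_convex_hull} you must show the $\Sigma$-constraints hold at $\bo y^{(N)}$. You write $\val(\bo Q_{ii}(\bo y^{(N)})) = \max_{\delta} Q_{ii}^{+}(x^{(\delta,N)})$, but this omits the convex-combination weights. If $\bo y^{(N)} = \sum_{\delta}\lambda_{\delta}\bo x^{(\delta,N)}$ with $\lambda_{\delta}\geq 0$, $\sum_\delta \lambda_\delta = 1$, the correct identity is $\val(\bo Q_{ii}(\bo y^{(N)})) = \max_{\delta}\bigl(\val(\lambda_{\delta}) + Q_{ii}^{+}(x^{(\delta,N)})\bigr)$, where one only knows $\val(\lambda_\delta)\leq 0$ with $\max_\delta \val(\lambda_\delta)=0$. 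Once this is corrected, your choice of $\delta^{*}$ (attaining $\max_\delta \val(\bo Q_{ij}(\bo x^{(\delta,N)}))$) no longer yields the claimed domination: you cannot bound $\val(\bo Q_{ii}(\bo y^{(N)}))$ from below by $Q_{ii}^{+}(x^{(\delta^{*},N)})$, since $\lambda_{\delta^{*}}$ may have strictly negative valuation or even vanish. The argument is salvageable by instead choosing $\delta^{*}$ to maximize $\val(\lambda_{\delta}) + \val(\bo Q_{ij}(\bo x^{(\delta,N)}))$, but the far simpler fix --- and the one the paper uses --- is to observe that
\[
\bigl\{ \bo y \in \nnpuiseux^{n} \colon \forall i,\ \bo Q_{ii}(\bo y) \ge 0 \ \land\ \forall (i,j)\in\Sigma,\ \bo Q_{ii}(\bo y)\bo Q_{jj}(\bo y) \ge (m-1)^{2}\bo Q_{ij}(\bo y)^{2} \bigr\}
\]
is itself a spectrahedron (each constraint is a $1\times 1$ or $2\times 2$ PSD block), hence convex; since every $\bo x^{\Diamond}$ lies in it, so does any convex combination, and no valuation bookkeeping is needed at all. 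This convexity observation is precisely what makes \cref{lemma:zero_in_convex_hull} applicable here without further pain, and is the key idea your proposal does not register.
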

\begin{proof}
Fix any $\Sigma$ and take $x \in \bigcap_{\Diamond} \cl\bigl(\interspectra_{\Sigma, \Diamond} \bigr)$. By \cref{lemma:inclusion_of_interior}, for every $\Diamond \in \{\le, \ge \}^{\Sigma^{\complement}}$ there exists a lift $\bo x^{\Diamond} \in \pospuiseux^{n} \cap \val^{-1}(x)$ such that we have the inequalities
\begin{align*}
\forall i, \bo Q_{ii}(\bo x^{\Diamond}) &\ge 0 \, , \\
\forall (i,j) \in \Sigma, \bo Q_{ii}(\bo x^{\Diamond})\bo Q_{jj}(\bo x^{\Diamond}) &\ge (m - 1)^{2}(\bo Q_{ij}(\bo x^{\Diamond}))^{2} \, , \\
\forall (i,j) \in \Sigma^{\complement}, \bo Q_{ij}(\bo x^{\Diamond}) & \mathrel{\Diamond}_{(i,j)} 0 \, .
\end{align*}
Observe that the set
\[
\{ \bo y \in \nnpuiseux^{n} \colon \forall i, \bo Q_{ii}(\bo y) \ge 0 \land \forall (i,j) \in \Sigma, \bo Q_{ii}(\bo y)\bo Q_{jj} (\bo y)\ge (m - 1)^{2}(\bo Q_{ij}(\bo y))^{2} \}
\]
is convex. Indeed, it is a spectrahedron defined by some block diagonal matrices with blocks of size at most $2$. Therefore, by \cref{lemma:zero_in_convex_hull}, there exists a point $\bo z \in \conv_{\Diamond}\{\bo x^{\Diamond} \}$ such that
\begin{align*}
\forall i, \bo Q_{ii}(\bo z) &\ge 0 \, , \\
\forall (i,j) \in \Sigma, \bo Q_{ii}(\bo z)\bo Q_{jj}(\bo z) &\ge (m - 1)^{2}(\bo Q_{ij}(\bo z))^{2} \, , \\
\forall (i,j) \in \Sigma^{\complement}, \bo Q_{ij}(\bo z) &= 0 \, .
\end{align*}
In particular, we have $\bo Q_{ii}(\bo z)\bo Q_{jj}(\bo z) \ge (m - 1)^{2}(\bo Q_{ij}(\bo z))^{2}$ for all $(i,j)$ such that $i \neq j$. Therefore $\bo z \in \inbspectra$. Moreover, since $\bo x^{\Diamond} \in \pospuiseux^{n} \cap \val^{-1}(x)$ for all $\Diamond$, we have $\bo z \in \pospuiseux^{n} \cap \val^{-1}(x)$.
\end{proof}
\begin{theorem}\label[theorem]{corollary:genericity}
Let $\bspectra = \{ \bo x \in \nnpuiseux^n \colon \bo Q(\bo x) \loew 0 \}$ be a spectrahedron described by matrices $\bo Q^{(1)}, \dots, \bo Q^{(n)}$ such that $\sval(\bo Q^{(k)}) = Q^{(k)}$. Suppose that \cref{assumption:nondeg_minors} is fulfilled and that every stratum of the set
$\spectra_{\Sigma, \Diamond}(Q^{(1)}, \dots, Q^{(n)})$ is regular for every choice of $(\Sigma, \Diamond)$. Then
\[
\val(\bspectra) = \spectra(Q^{(1)}, \dots, Q^{(n)}) \, .
\]
\end{theorem}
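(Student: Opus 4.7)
The plan is to follow the template of~\cref{corollary:metzler_regularity}: reduce the claim to a stratum-by-stratum analysis, and on each stratum squeeze $\val(\bspectra)$ between two sets equal to $\spectra \cap \R^n$, using the sandwich $\inbspectra \subset \bspectra \subset \outbspectra$ provided by~\cref{prop:semidefinite_by_minors,lemma:dominant_minors_of_order_two}.

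First, I would reduce to proving the top-dimensional equality $\val(\bspectra \cap \pospuiseux^n) = \spectra \cap \R^n$. For each nonempty $K \subset [n]$, the stratum of $\val(\bspectra)$ associated with $K$ equals $\val(\bspectra^{(K)} \cap \pospuiseux^K)$, where $\bspectra^{(K)}$ is the spectrahedron described by $(\bo Q^{(k)})_{k \in K}$, and similarly for $\spectra^{(K)}$. Since~\cref{assumption:nondeg_minors} and the regularity hypothesis transfer to the restricted data, it is enough to treat the top-dimensional case. The inclusion $\val(\bspectra \cap \pospuiseux^n) \subset \spectra \cap \R^n$ is then immediate from $\bspectra \subset \outbspectra$ and~\cref{lemma:sout}.

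The reverse inclusion is the substance of the argument. By the decomposition~\cref{eq:decomposition}, $\spectra = \bigcup_\Sigma \bigcap_\Diamond \spectra_{\Sigma,\Diamond}$, where each $\spectra_{\Sigma,\Diamond}$ is a tropical Metzler-type set, defined by three families of constraints (diagonal inequalities, $2\times 2$ minor inequalities for $(i,j) \in \Sigma$, and sign inequalities for $(i,j) \in \Sigma^\complement$). Under the regularity hypothesis on its $\R^n$ stratum, I would invoke~\cref{lemma:purity_closes_strict} to conclude that $\spectra_{\Sigma,\Diamond} \cap \R^n = \cl(\interspectra_{\Sigma,\Diamond})$. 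The key step is to recast each defining constraint in the form $P^+(x) \geq P^-(x)$ for a single tropical polynomial $P$ whose positive and negative monomial parts are disjoint. For the diagonal and sign constraints this is automatic, since every coefficient $Q^{(k)}_{ij}$ has a determined sign, so each monomial $X_k$ appears with only one sign. For the $2\times 2$ minor constraints, one observes that $(Q^+_{ij}(x) \tplus Q^-_{ij}(x))^{\tdot 2} = \bigoplus_k \abs{Q^{(k)}_{ij}}^{\tdot 2} \tdot X_k^{\tdot 2}$, which reduces the situation to the Metzler setting; the formal-subtraction construction of $P_{ij}$ from the proof of~\cref{th:metzler_positive_orthant} then applies verbatim, with cross-terms $X_k X_l$ ($k \neq l$) sent to $P^+_{ij}$ and $X_k^{\tdot 2}$ terms routed to $P^+_{ij}$ or $P^-_{ij}$ according to whether $Q^{(k)}_{ii} \tdot Q^{(k)}_{jj} > 2 \abs{Q^{(k)}_{ij}}$ or the reverse, the strict comparison being supplied by~\cref{assumption:nondeg_minors}.

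With $\spectra_{\Sigma,\Diamond} \cap \R^n = \cl(\interspectra_{\Sigma,\Diamond})$ in hand for every $(\Sigma, \Diamond)$, taking intersections over $\Diamond$ and unions over $\Sigma$ yields
\[
\spectra \cap \R^n = \bigcup_\Sigma \bigcap_\Diamond \cl(\interspectra_{\Sigma,\Diamond}) \subset \val(\inbspectra \cap \pospuiseux^n) \subset \val(\bspectra \cap \pospuiseux^n),
\]
where the first inclusion is~\cref{lemma:sin} and the second uses $\inbspectra \subset \bspectra$. The main obstacle is the polynomial-rewriting step described above: we must produce polynomials $P$ that simultaneously encode the minor inequalities of $\spectra_{\Sigma,\Diamond}$ and keep the strict and nonstrict versions of the constraints equivalent. \Cref{assumption:nondeg_minors} is precisely the hypothesis that makes this possible; once it is in force, the Metzler construction transfers to the non-Metzler setting and the remainder of the proof is routine combinatorial plumbing built on top of \cref{lemma:sout,lemma:sin}.
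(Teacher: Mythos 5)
Your proposal follows essentially the same route as the paper's proof: stratum-by-stratum reduction, the decomposition in \cref{eq:decomposition}, recognizing that each $\spectra_{\Sigma,\Diamond}$ is a tropical Metzler spectrahedron whose data inherit \cref{assumption:nondeg_minors}, invoking \cref{lemma:purity_closes_strict} (via the argument of \cref{th:metzler_positive_orthant}) to get $\spectra_{\Sigma,\Diamond}\cap\R^n = \cl(\interspectra_{\Sigma,\Diamond})$, and then combining \cref{lemma:sout,lemma:sin}. The paper makes the Metzler reduction slightly more explicit by writing down the block matrices $\bigl(\begin{smallmatrix}\tilde Q^{(k)} & -\infty\\ -\infty & R^{(k)}\end{smallmatrix}\bigr)$, but the content is the same; just fix your small slip where $Q^{(k)}_{ii}\tdot Q^{(k)}_{jj} > 2\abs{Q^{(k)}_{ij}}$ should read $Q^{(k)}_{ii}\tdot Q^{(k)}_{jj} > (Q^{(k)}_{ij})^{\tdot 2}$, i.e.\ $Q^{(k)}_{ii}+Q^{(k)}_{jj} > 2\abs{Q^{(k)}_{ij}}$.
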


\begin{proof}
We focus on the proof of the identity $\val(\bspectra \cap \pospuiseux^n) = \spectra(Q^{(1)}, \dots, Q^{(n)}) \cap \R^n$, as the generalization to all strata can be obtained analogously to the proof of \cref{corollary:metzler_regularity}. Let $(\Sigma, \Diamond)$ be fixed and observe that $\spectra_{\Sigma,\Diamond}$ is a tropical Metzler spectrahedron. More precisely, it is described by the following tropical block diagonal matrices

\begin{equation}
\begin{bmatrix}
\tilde{Q}^{(k)} & -\infty \\ -\infty & R^{(k)} 
\end{bmatrix} \, ,
\end{equation}
where $\tilde{Q}^{(k)} \in \strop^{m \times m}$ is the symmetric matrix defined by
\[
\tilde{Q}^{(k)}_{ij} \coloneqq
\begin{cases}
Q^{(k)}_{ii} & \text{if} \ i = j \, , \\
\tminus \abs{Q^{(k)}_{ij}} & \text{if} \ (i,j) \in \Sigma \, , \\
-\infty & \text{if} \ (i,j) \in \Sigma^{\complement} \, .
\end{cases}
\]
and $R^{(k)} \in \strop^{\Sigma^{\complement} \times \Sigma^{\complement}}$ is the (tropical) diagonal matrix consisting of the coefficients $Q^{(k)}_{ij}$ if $\Diamond_{(i,j)}$ is equal to $\geq$ and $\tminus Q^{(k)}_{ij}$ otherwise, where $(i,j)$ ranges over the set $\Sigma^{\complement}$.

It can be verified that, as soon as the matrices $Q^{(k)}$ satisfy \cref{assumption:nondeg_minors}, this assumption is also satisfied by all block matrices $\begin{bsmallmatrix} \tilde{Q}^{(k)} & -\infty \\ -\infty & R^{(k)} \end{bsmallmatrix}$. In consequence, as shown in the proof of \cref{th:metzler_positive_orthant}, the sets $\cl(\interspectra_{\Sigma,\Diamond})$ and $\spectra_{\Sigma,\Diamond} \cap \R^n$ coincide. Then, the theorem follows from~\cref{eq:decomposition,lemma:sin,lemma:sout}.
\end{proof}
We now show that, under the regularity conditions discussed above, tropical spectrahedra arise as tropicalizations of minors of size $1$ and $2$ of the linear matrix inequalities that define them. To do so, let $\bspectra = \{ \bo x \in \nnpuiseux^n \colon \bo Q(\bo x) \loew 0 \}$ be a spectrahedron described by matrices $\bo Q^{(1)}, \dots, \bo Q^{(n)}$ as above. For every $i, j \in [m]$ such that $i \neq j$, let
\[
\bo P_{ij} \coloneqq \bo Q_{ii}(\bo x) \bo Q_{jj}(\bo x) - \bo Q_{ij}^{2}(\bo x)
\]
denote the corresponding $2 \times 2$ minor of the matrix pencil $\bo Q(\bo x)$. Furthermore, for every $i \neq j$ let $P_{ij} \coloneqq \tropPol(\bo P_{ij})$ and, for every $i \in [m]$, let $P_{ii} \coloneqq \tropPol(\bo Q_{ii})$. In this way, $P_{ij}$ are the tropicalizations of the $1 \times 1$ and $2 \times 2$ minors of $\bo Q(\bo x)$. Finally, let
\[
\minspectra \coloneqq \{x \in \trop^{n} \colon \forall i, \,  P^{+}_{ii}(x) \ge P^{-}_{ii}(x) \, \land \, \forall i \neq j, \, P^{+}_{ij}(x) \ge P^{-}_{ij}(x) \}
\]
denote the set obtained by this formal tropicalization. 

\begin{theorem}\label[theorem]{concide_minors}
  Suppose that the assumptions of \cref{corollary:genericity} are satisfied. Then, the tropical spectrahedron $\val(\bspectra)$ is determined only by the formal tropicalization of $1\times 1$ and $2\times 2$ minors, in other words
  \[\val(\bspectra) = \minspectra
  \enspace .
  \]
\end{theorem}

We shall deduce this theorem from the following general ``sandwich'' lemma, which does not require any genericity assumption.

\begin{lemma}\label[lemma]{le:formal_trop_minors}
We have $\val(\bspectra) \subset \val(\outbspectra) \subset \minspectra \subset \spectra(Q^{(1)}, \dots, Q^{(n)})$.
\end{lemma}
Let us first explain why \cref{concide_minors}
follows in a straightforward way from \cref{le:formal_trop_minors}.
\begin{proof}[Derivation of \cref{concide_minors} from \cref{le:formal_trop_minors}]
By \cref{le:formal_trop_minors} we have $\val(\bspectra) \subset \minspectra \subset \spectra(Q^{(1)}, \dots, Q^{(n)})$. Furthermore, \cref{corollary:genericity} implies that $\val(\bspectra) = \spectra(Q^{(1)}, \dots, Q^{(n)})$. Hence, we have the equality $\val(\bspectra) = \minspectra$.
\end{proof}
The proof of \cref{le:formal_trop_minors} requires more technical efforts.

\begin{proof}[Proof of~\cref{le:formal_trop_minors}]
  The inclusion $\val(\bspectra) \subset \val(\outbspectra)$ is trivial. The inclusion $\val(\outbspectra) \subset \minspectra$ follows from \cref{lemma:homomorphism}, as in the proof of \cref{corollary:purity_commutation}. It remains to show that $\minspectra$ is included in $\spectra(Q^{(1)}, \dots, Q^{(n)})$. To do so,
  let us choose an arbitrary point $\barx \in \minspectra$.
  We first observe that $P_{ii} = Q_{ii}$ for all $i$ by the definition of the formal tropicalization. In particular,
  $Q_{ii}^{+}(\barx) \ge Q_{ii}^{-}(\barx)$ for all $i \in [m]$.
  
  We must
  show that
  \begin{align*} Q_{ii}^{+}(\barx) \tdot Q_{jj}^{+}(\barx) \ge (Q^+_{ij}(\barx) \tplus Q^-_{ij}(\barx))^{\tdot 2}
  \text{ or }Q_{ij}^{+}(\barx) = Q_{ij}^{-}(\barx)\text{ for all }i \neq j
  \enspace .
  \end{align*}
  It suffices to consider $i \neq j$ such that $Q_{ij}^{+}(\barx) \neq Q^{-}_{ij}(\barx)$ and to show that $Q_{ii}^{+}(\barx) \tdot Q_{jj}^{+}(\barx) \ge (Q^+_{ij}(\barx) \tplus Q^-_{ij}(\barx))^{\tdot 2}$.
  Since $Q_{ij}^{+}(\barx) \neq Q^{-}_{ij}(\barx)$, at least one of the polynomials $Q_{ij}^{+}, Q^{-}_{ij}$ is nonzero. By the definition of $Q_{ij}^{+}$ and $Q^{-}_{ij}$ we have $Q_{ij}^{+}(x) \tplus Q^{-}_{ij}(x) = \max_{k \in [n]}\{\val(\bo Q_{ij}^{(k)}) + x_{k}\}$. Let $k' \in [n]$ be an index
such that
\begin{equation}\label{eq:max_index_minor}
Q_{ij}^{+}(\barx) \tplus Q^{-}_{ij}(\barx) = \val(\bo Q_{ij}^{(k')}) + \barx_{k'} \, .
\end{equation}
Let us point out that $\val(\bo Q_{ij}^{(k')}) \neq -\infty$ and $\barx_{k'} \neq -\infty$. Indeed, $Q_{ij}^{+}(\barx) \neq Q^{-}_{ij}(\barx)$ implies that $Q_{ij}^{+}(\barx) \tplus Q^{-}_{ij}(\barx) \neq -\infty$, so neither $\val(\bo Q_{ij}^{(k')})$ nor $\barx_{k'}$ can be equal to $-\infty$.
To prove the claim, it remains to show that
\begin{align}
  2\val(\bo Q_{ij}^{(k')}) + 2\barx_{k'} \le Q_{ii}^{+}(\barx) \tdot Q_{jj}^{+}(\barx)
  \enspace .\label{e-intermediate}
  \end{align}
Consider two cases. First, suppose that $ 2\val(\bo Q_{ij}^{(k')}) \le \val(\bo Q_{ii}^{(k')}\bo Q_{jj}^{(k')})$. In this case, using the inequalities $Q^{+}_{ii}(\barx) \ge  Q^{-}_{ii}(\barx)$ and $Q^{+}_{jj}(\barx) \ge  Q^{-}_{jj}(\barx)$, we obtain
\begin{align*}
&2\val(\bo Q_{ij}^{(k')}) + 2\barx_{k'} \le \val(\bo Q_{ii}^{(k')}\bo Q_{jj}^{(k')}) + 2\barx_{k'} \\
&\le \bigl(Q^{+}_{ii}(\barx) \tplus  Q^{-}_{ii}(\barx)\bigr) \tdot \bigl(Q^{+}_{jj}(\barx) \tplus  Q^{-}_{jj}(\barx)\bigr) =  Q^{+}_{ii}(\barx) \tdot  Q^{+}_{jj}(\barx) \, ,
\end{align*}
and so,~\cref{e-intermediate} is established in this case.
Second, suppose that $ 2\val(\bo Q_{ij}^{(k')}) > \val(\bo Q_{ii}^{(k')}\bo Q_{jj}^{(k')})$.  This case requires a longer argument.

To start, note that $ 2\val(\bo Q_{ij}^{(k')}) > \val(\bo Q_{ii}^{(k')}\bo Q_{jj}^{(k')})$ implies that $(\bo Q_{ij}^{(k')})^{2} > \bo Q_{ii}^{(k')}\bo Q_{jj}^{(k')}$ and that $\val\bigl(\bo Q_{ii}^{(k')}\bo Q_{jj}^{(k')} - (\bo Q_{ij}^{(k')})^{2} \bigr) = 2\val(\bo Q_{ij}^{(k')})$. Furthermore, the minor $\bo P_{ij}$ is given as
\begin{equation}
\begin{aligned}
\bo P_{ij}(\bo x) = &\sum_{k \in [n]} \bigl( \bo Q^{(k)}_{ii}\bo Q^{(k)}_{jj} - (\bo Q^{(k)}_{ij})^{2} \bigr) \bo x_{k}^{2} \\
&+ \sum_{k < l}(\bo Q_{ii}^{(k)}\bo Q_{jj}^{(l)} + \bo Q_{ii}^{(l)}\bo Q_{jj}^{(k)} - 2\bo Q_{ij}^{(k)}\bo Q_{ij}^{(l)})\bo x_{k} \bo x_{l} \, .\label{e-temp}
\end{aligned}
\end{equation}
In particular, the polynomial $\bo P_{ij}(\bo x)$ has at least one monomial with (strictly) negative coefficient, namely $(\bo Q_{ii}^{(k')}\bo Q_{jj}^{(k')} - (\bo Q_{ij}^{(k')})^{2}) \bo x_{k'}^{2}$.
Remembering that $P^-_{ij}=\tropPol(\bm P^-_{ij})$ is the tropical polynomial whose monomials are obtained by tropicalizing the monomials of $\bm P_{ij}$ with negative coefficients (as in \cref{section:tropical_algebra}), we see that $P^{-}_{ij}$ is a nonzero tropical polynomial and that it contains a monomial of the form $\val(\bo Q_{ij}^{(k')})^{\tdot 2} \tdot x_{k'}^{\tdot 2}$. Furthermore, recall that we have $P^{+}_{ij}(\barx) \ge P^{-}_{ij}(\barx)$
and that we already showed that $\barx_{k'} \neq -\infty$. Hence, we also have $P^{-}_{ij}(\barx) \neq -\infty$, which implies that $P^{+}_{ij}(\barx) \neq -\infty$. Therefore, the polynomial $\bo P_{ij}$ has at least one monomial with (strictly) positive coefficient. In other words, there exist indices $k, l \in [n]$ (possibly equal) such that 
\[
\bo Q_{ii}^{(k)}\bo Q_{jj}^{(l)} + \bo Q_{ii}^{(l)}\bo Q_{jj}^{(k)} - 2\bo Q_{ij}^{(k)}\bo Q_{ij}^{(l)} > 0 \, .
\]
By definition, $P^+_{ij}(\barx)$
is the maximum of the tropical monomials arising from the monomials
with positive coefficients in~\cref{e-temp}, evaluated at point $\barx$.
Choosing a monomial attaining this maximum yields a pair of indices $(k,l)$ such that $P^{+}_{ij}(\barx) = \val(\bo Q_{ii}^{(k)}\bo Q_{jj}^{(l)} + \bo Q_{ii}^{(l)}\bo Q_{jj}^{(k)} - 2\bo Q_{ij}^{(k)}\bo Q_{ij}^{(l)}) + \barx_{k} + \barx_{l}$. Since $P^{+}_{ij}(\barx) \ge P^{-}_{ij}(\barx)$, we also have

\[
P^{+}_{ij}(\barx) \tplus P^{-}_{ij}(\barx) = \val(\bo Q_{ii}^{(k)}\bo Q_{jj}^{(l)} + \bo Q_{ii}^{(l)}\bo Q_{jj}^{(k)} - 2\bo Q_{ij}^{(k)}\bo Q_{ij}^{(l)}) + \barx_{k} + \barx_{l} \, .
\]
Hence, we get
\begin{equation}\label{eq:compute_minors}
\begin{aligned}
&2\val(\bo Q_{ij}^{(k')}) + 2\barx_{k'} = \val\bigl(\bo Q_{ii}^{(k')}\bo Q_{jj}^{(k')} - (\bo Q_{ij}^{(k')})^{2} \bigr) + 2\barx_{k'} \\
&\le P^{+}_{ij}(\barx) \tplus P^{-}_{ij}(\barx) \le \max\{\val(\bo Q_{ii}^{(k)}\bo Q_{jj}^{(l)} + \bo Q_{ii}^{(l)}\bo Q_{jj}^{(k)}), \val(\bo Q_{ij}^{(k)}\bo Q_{ij}^{(l)})\} + \barx_{k} + \barx_{l} \, .
\end{aligned}
\end{equation}
We will show that 
\[
\max\{\val(\bo Q_{ii}^{(k)}\bo Q_{jj}^{(l)} + \bo Q_{ii}^{(l)}\bo Q_{jj}^{(k)}), \val(\bo Q_{ij}^{(k)}\bo Q_{ij}^{(l)})\} = \val(\bo Q_{ii}^{(k)}\bo Q_{jj}^{(l)} + \bo Q_{ii}^{(l)}\bo Q_{jj}^{(k)}) \, .
\]
To do so, suppose that $ \val(\bo Q_{ij}^{(k)}\bo Q_{ij}^{(l)}) > \val(\bo Q_{ii}^{(k)}\bo Q_{jj}^{(l)} +  \bo Q_{ii}^{(l)}\bo Q_{jj}^{(k)})$. Then, we have $\bo Q_{ij}^{(k)}\bo Q_{ij}^{(l)} < 0$, because $\bo Q_{ii}^{(k)}\bo Q_{jj}^{(l)} + \bo Q_{ii}^{(l)}\bo Q_{jj}^{(k)}- 2\bo Q_{ij}^{(k)}\bo Q_{ij}^{(l)} \ge 0$. In particular, since $Q_{ij}^{+}(\barx) \neq Q^{-}_{ij}(\barx)$, we obtain
\begin{align*}
2\val(\bo Q_{ij}^{(k')}) + 2\barx_{k'} &\le \val(\bo Q_{ij}^{(k)}\bo Q_{ij}^{(l)}) + \barx_{k} + \barx_{l} \\
&\le Q_{ij}^{+}(\barx) \tdot Q^{-}_{ij}(\barx) < (Q^+_{ij}(\barx) \tplus Q^-_{ij}(\barx))^{\tdot 2} \, ,
\end{align*}
which gives a contradiction with~\cref{eq:max_index_minor}. Hence, $\val(\bo Q_{ii}^{(k)}\bo Q_{jj}^{(l)} + \bo Q_{ii}^{(l)}\bo Q_{jj}^{(k)}) \ge  \val(\bo Q_{ij}^{(k)}\bo Q_{ij}^{(l)})$ and~\cref{eq:compute_minors} gives
\begin{equation}
\begin{aligned}
2\val(\bo Q_{ij}^{(k')}) + 2\barx_{k'}  &\le \val(\bo Q_{ii}^{(k)}\bo Q_{jj}^{(l)} + \bo Q_{ii}^{(l)}\bo Q_{jj}^{(k)}) + \barx_{k} + \barx_{l} \\
&\le \max\{\val(\bo Q_{ii}^{(k)}\bo Q_{jj}^{(l)}) + \barx_{k} + \barx_{l}, \val(\bo Q_{ii}^{(l)}\bo Q_{jj}^{(k)}) + \barx_{k} + \barx_{l}\}\\
&\le (Q^{+}_{ii}(\barx) \tplus Q^{-}_{ii}(\barx)) \tdot  (Q^{+}_{jj}(\barx) \tplus Q^{-}_{jj}(\barx)) \\
&= Q^{+}_{ii}(\barx) \tdot Q^{+}_{jj}(\barx) \, ,
\end{aligned}
\end{equation}
which shows~\cref{e-intermediate}. This concludes the proof of \cref{le:formal_trop_minors}, and so, of \cref{concide_minors}.
\end{proof}

\begin{figure}[t]
\centering
\begin{tikzpicture}
\begin{scope}[scale = 0.6]
      \draw[gray!80!black, thin, ->] (0,4) -- (8,4) node[below right] {$x_1$};
      \draw[gray!90!black, thin, ->] (4,0) -- (4,8) node[below left] {$x_2$};
      \draw[gray!60, ultra thin] (0,0) grid (8,8);
       \fill[fill=lightgray, fill opacity = 0.7]
        (0,4) -- (4,4) -- (4,0) -- (0,0) -- cycle;
        \draw[very thick] (0,4) -- (4,4) -- (4,0);
        \draw[very thick] (4,4) -- (8,8);
\end{scope}
\end{tikzpicture}
\caption{A nonregular tropical spectrahedron that fulfills the regularity conditions of \cref{corollary:genericity}.}\label[figure]{fig:spectra_not_pure}
\end{figure}
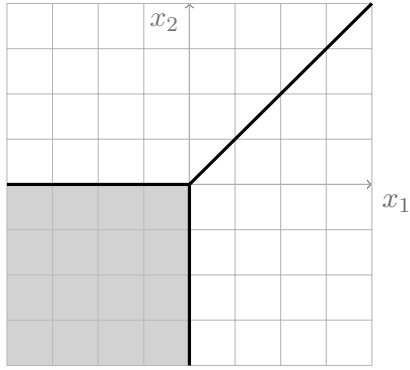

\begin{example}\label[example]{ex:spectra_not_pure}
Take the matrices
\[ 
Q^{(0)} \coloneqq \begin{bmatrix} a & \zero \\ \zero & b \end{bmatrix}, \ Q^{(1)} \coloneqq \begin{bmatrix} \zero & c \\ c & \zero \end{bmatrix}, \ Q^{(2)} \coloneqq \begin{bmatrix} \zero & \tminus d \\ \tminus d & \zero \end{bmatrix} \, .
\]
The set $\spectra(Q^{(0)}, Q^{(1)}, Q^{(2)})$ is given by
\[
\{x \in \trop^{3} \colon (a + x_{0}) + (b + x_{0}) \ge 2\max\{c+x_{1}, d + x_{2}\} \text{ or } c + x_{1} = d + x_{2} \} \, .
\]
Moreover, $\spectra(Q^{(0)}, Q^{(1)}, Q^{(2)})$ fulfills the conditions of \cref{corollary:genericity}. \Cref{fig:spectra_not_pure} depicts the intersection of this tropical spectrahedron with the hyperplane $\{x_{0} = 0 \}$ for $a=b=c=d=0$. Note that this tropical spectrahedron is not regular for any choice of $a,b,c,d \in \R$. Furthermore, let
\[
\bo Q^{(0)} \coloneqq \begin{bmatrix} t^{a} & 0 \\ 0 & t^{b} \end{bmatrix}, \ \bo Q^{(1)} \coloneqq \begin{bmatrix} 0 & t^{c} \\ t^{c} & 0 \end{bmatrix}, \ \bo Q^{(2)} \coloneqq \begin{bmatrix} 0 & -t^{d} \\ -t^{d} & 0 \end{bmatrix} \, 
\]
be a lift of the matrices $\bo Q^{(k)}$. Then, the associated tropical spectrahedron $\bspectra$ is given by
\[
\bspectra = \{\bo x \in \nnpuiseux^{3} \colon t^{a+b}\bo x_{0}^2 -t^{2c}\bo x_{1}^{2} - t^{2d}\bo x_{2}^{2} + 2t^{c + d}\bo x_{1} \bo x_{2} \ge 0 \} \, .
\]
The formal tropicalization $\minspectra$ of this set is given by
\[
\{x \in \trop^{3} \colon ((a+b)\tdot x_{0}^{\tdot 2}) \tplus ((c+d) \tdot x_{1} \tdot x_{2}) \ge (2c \tdot x_{1}^{\tdot 2}) \tplus (2d \tdot x_{2}^{\tdot 2}) \} \, .
\]
It is easy to check that $\minspectra$ coincides with $\spectra(Q^{(0)}, Q^{(1)}, Q^{(2)})$, as implied by \cref{concide_minors}.
\end{example}

\subsection{Genericity conditions}\label[section]{section:generic}
In this section we show that the requirements of \cref{th:metzler_positive_orthant,corollary:genericity} on the matrices $Q^{(k)}$ and the regularity of sets are fulfilled generically. In \cite{tropical_simplex} it was shown that genericity conditions for tropical polyhedra can be described by the means of tangent digraphs. We extend this characterization to tropical spectrahedra. For this purpose, we work with hypergraphs instead of graphs. A \emph{(directed) hypergraph} is a pair $\dgraph \coloneqq (\vertices, \edges)$, where $\vertices$ is a finite set of \emph{vertices} and $\edges$ is a finite set of \emph{(hyper)edges}. Every edge $\edge \in \edges$ is a pair $(\tails_{\edge}, \head_{\edge})$, where $\head_{\edge} \in \vertices$ is called the \emph{head} of the edge, and $\tails_{\edge}$ is a multiset with elements taken from $\vertices$. We call $\tails_{\edge}$ the \emph{multiset of tails of $\edge$}. By $\card{\tails_{\edge}}$ we denote the cardinality of $\tails_{\edge}$ (counting multiplicities). Note that we do not exclude the situation in which a head is also a tail, i.e., it is possible that $\head_{\edge} \in \tails_{\edge}$.

Let us now define the notion of a circulation in a hypergraph. If $\vertex \in \vertices$ is a vertex, then by $\inedge(\vertex) \subset \edges$ we denote the set of \emph{incoming edges}, i.e., the set of all edges $e$ such that $\head_{\edge} = \vertex$. By $\outedge(\vertex)$ we denote the multiset of \emph{outgoing edges}, i.e., a multiset of edges $\edge$ such that $\vertex \in \tails_{\edge}$. We treat $\outedge(\vertex)$ as a multiset, with the convention that $\edge \in \edges$ appears $p$ times in $\outedge(\vertex)$ if $\vertex$ appears 
$p$
times in $\tails_{\edge}$. A \emph{circulation} in a hypergraph is a nonzero vector $\circulation = (\circulation_{\edge})_{\edge \in \edges}$ such that $\circulation_{\edge} \ge 0$ for all $\edge \in \edges$, and for all $\vertex \in \vertices$ we have the equality
\[
\sum_{\edge \in \inedge(\vertex)} \card{\tails_{\edge}}\circulation_{\edge} = \sum_{\edge \in \outedge(\vertex)} \circulation_{\edge} \, .
\]
We always suppose that circulations are normalized, i.e., that $\sum_{\edge \in \edges} \circulation_{\edge} = 1$. Observe that if a hypergraph $\dgraph$ is fixed, then the set of all normalized circulations on $\dgraph$ forms a polytope. We say that a hypergraph \emph{does not admit a circulation} if this polytope is empty.

In our framework, every edge has at most two tails (counting multiplicities).  Hereafter, $\stbase_{k}$ denotes the $k$th vector of the standard basis in $\R^{n}$. Given a sequence of tropical symmetric Metzler matrices $Q^{(1)}, \dots, Q^{(n)} \in \strop^{m \times m}$, and a point $x \in \R^{n}$, we construct a hypergraph associated with $x$, denoted $\dgraph_{x}$, as follows:
\begin{itemize}
\item we put $\vertices \coloneqq [n]$;
\item for every $i \in [m]$ verifying $Q_{ii}^{+}(x) = Q_{ii}^{-}(x) \neq \zero$, and every pair 
$\stbase_{k} \in \domin(Q_{ii}^{+}, x)$, $\stbase_{l} \in \domin(Q_{ii}^{-}, x)$, $\dgraph_{x}$ contains an edge $(k,l)$;
\item for every $i < j$ such that $Q_{ii}^{+}(x) \tdot Q_{jj}^{+}(x) = (Q_{ij}(x))^{\tdot 2} \neq \zero$ and every triple $\stbase_{k_{1}} \in \domin(Q_{ii}^{+}, x)$, $\stbase_{k_{2}} \in \domin(Q_{jj}^{+}, x)$, $\stbase_{l} \in \domin(Q_{ij}, x)$, $\dgraph_{x}$ contains an edge $(\{k_{1}, k_{2}\},l)$.
\end{itemize}
\begin{lemma}\label[lemma]{lemma:circulation}
Suppose that for every $x \in \R^{n}$ the hypergraph $\dgraph_{x}$ does not admit a circulation. Then the matrices $Q^{(1)}, \dots, Q^{(n)}$ fulfill \cref{assumption:nondeg_minors} and $\spectra(Q^{(1)}, \dots, Q^{(n)}) \cap \R^{n}$ is regular.
\end{lemma}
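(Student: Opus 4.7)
The plan is to prove the two conclusions separately, both leveraging the no-circulation hypothesis: \cref{assumption:nondeg_minors} follows by a direct contradiction, while regularity of $\spectra \cap \R^n$ reduces to a linear programming duality over the hypergraph. First, for \cref{assumption:nondeg_minors}, I would argue by contradiction: suppose $Q^{(k)}_{ii}, Q^{(k)}_{jj} \in \postrop$ and $Q^{(k)}_{ii} + Q^{(k)}_{jj} = 2\abs{Q^{(k)}_{ij}}$ for some $k \in [n]$ and $i \neq j$. Choose $x \in \R^n$ with $x_k = 0$ and $x_l$ sufficiently negative for $l \neq k$, so that the monomial indexed by $k$ is the unique dominating monomial in $Q^+_{ii}$, $Q^+_{jj}$, and $Q_{ij}$ at $x$. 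Then $Q^+_{ii}(x) \tdot Q^+_{jj}(x) = Q^{(k)}_{ii} + Q^{(k)}_{jj} = 2\abs{Q^{(k)}_{ij}} = (Q_{ij}(x))^{\tdot 2} \neq \zero$. Consequently, $\dgraph_x$ contains the loop edge $e = (\{k, k\}, k)$, and assigning $\circulation_e = 1$ yields a circulation (at vertex $k$ both sides of the balance equation equal $2$), contradicting the hypothesis.

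For regularity, I would observe first that $\spectra \cap \R^n$ is closed, being defined by inequalities among continuous piecewise-affine maps, so it suffices to prove $\spectra \cap \R^n \subset \cl(\interspectra)$, where $\interspectra$ is the set of strict nontrivial constraints as in \cref{lemma:inclusion_of_interior}. Fix $x \in \spectra \cap \R^n$ and let $A$ be the incidence matrix of $\dgraph_x$ with entries $A_{e,v} = \mu_e(v) - \card{\tails_e}\, \delta_{v, h_e}$, where $\mu_e(v)$ denotes the multiplicity of $v$ in $\tails_e$. The system $Ay > 0$ expresses, for each edge $e = (\tails_e, h_e)$, the inequality $\sum_{v \in \tails_e} y_v > \card{\tails_e}\, y_{h_e}$. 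By Gordan's theorem, $\{y \in \R^n \colon Ay > 0\}$ is nonempty if and only if there is no $\lambda \geq 0$, $\lambda \neq 0$, with $A^\top \lambda = 0$; a direct computation identifies $A^\top \lambda = 0$ with the circulation equations $\sum_{e \in \outedge(v)} \lambda_e = \sum_{e \in \inedge(v)} \card{\tails_e}\, \lambda_e$. Since $\dgraph_x$ admits no circulation, a direction $y$ with $Ay > 0$ exists.

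I would then show that $x(t) \coloneqq x + ty$ lies in $\interspectra$ for $t > 0$ small enough. For every type-1 tight constraint $Q^+_{ii}(x) = Q^-_{ii}(x) \neq \zero$, the edges $(k, l)$ of $\dgraph_x$ enforce $y_k > y_l$ for all $\stbase_k \in \domin(Q^+_{ii}, x)$ and $\stbase_l \in \domin(Q^-_{ii}, x)$, so in particular $\max_k y_k > \max_l y_l$, which is the first-order rate of change making $Q^+_{ii}(x(t)) > Q^-_{ii}(x(t))$ for small $t > 0$. An analogous argument treats tight type-2 constraints: the edges $(\{k_1, k_2\}, l)$ force $y_{k_1} + y_{k_2} > 2y_l$ for every relevant triple, hence $\max_{k_1} y_{k_1} + \max_{k_2} y_{k_2} > 2 \max_l y_l$, yielding $Q^+_{ii}(x(t)) \tdot Q^+_{jj}(x(t)) > (Q_{ij}(x(t)))^{\tdot 2}$ for small $t$. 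All non-tight constraints remain non-tight near $x$ by continuity, so $x(t) \in \interspectra$, and $x \in \cl(\interspectra)$, completing the proof. The main obstacle is the careful Farkas/Gordan identification — specifically the multiplicity bookkeeping, since a vertex may occur twice in $\tails_e$ when a tropical $2 \times 2$ minor constraint involves a single variable on the diagonal side.
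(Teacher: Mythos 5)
Your proof is correct and follows essentially the same path as the paper: the same construction of $x$ forcing the loop edge $(\{k,k\},k)$ to refute a violation of \cref{assumption:nondeg_minors}, and the same theorem-of-the-alternative argument (the paper invokes Farkas, you more precisely cite Gordan) to produce the perturbation direction making all tight constraints strict. The only stylistic difference is that you compare maxima over the whole argmax sets rather than fixing a single $k^*$ as the paper does, which is an equivalent and slightly slicker bookkeeping.
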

\begin{proof}
To prove the first part, suppose that we have $Q^{(k)}_{ii} + Q^{(k)}_{jj} = 2\abs{Q^{(k)}_{ij}}$ for some $i \neq j$ and $Q^{(k)}_{ii}, Q^{(k)}_{jj} \in \postrop$. Take the point $x \coloneqq N\stbase_{k} \in \R^{n}$. If $N$ is large enough, then we have 
\[
Q_{ii}^{+}(x) \tdot Q_{jj}^{+}(x) = Q^{(k)}_{ii} + Q^{(k)}_{jj} + 2N =  2\abs{Q^{(k)}_{ij}} + 2N = (Q_{ij}(x))^{\tdot 2} \,
\]
and the hypergraph $\dgraph_{x}$ contains the edge $(\{k,k\}, k)$. This hypergraph admits a circulation (we put $\circulation_{\edge} \coloneqq 1$ for $\edge = (\{k,k\}, k)$ and $\circulation_{\edge} \coloneqq 0$ for other edges).

We now claim that the set $\spectra \cap \R^{n}$ is regular. Let $\interspectra$ be defined as in \cref{lemma:inclusion_of_interior}. Let us show that for every $x \in \spectra \cap \R^{n}$ there exists a vector $\eta \in \R^{n}$ such that $x + \rho \eta$  belongs to $\interspectra$ for $\rho > 0$ small enough. This is sufficient to prove the claim because $\interspectra$ is a subset of the interior of $\spectra \cap \R^{n}$. Fix a point $x \in \spectra \cap \R^{n}$. If $x$ belongs to $\interspectra$, then we can take $\eta \coloneqq 0$. Otherwise, let $\dgraph_{x}$ denote the hypergraph associated with $x$. The polytope of normalized circulations of this hypergraph is empty. Therefore, by Farkas' lemma, there exists a vector $\eta \in \R^{n}$ such that for every edge $\edge \in \edges$ we have
\[
\sum_{\vertex \in \tails_{\edge}} \eta_{\vertex} > \card{\tails_{\edge}} \eta_{\head_{\edge}} \, .
\]
Take the vector $x^{(\rho)} \coloneqq x + \rho \eta$. Let us look at two cases.

First, suppose that there is $i \in [m]$ such that $Q^{+}_{ii}(x) = Q^{-}_{ii}(x) \neq \zero$. Fix any $k^{*}$ such that $\stbase_{k^{*}} \in \domin(Q_{ii}^{+}, x)$ and take any $l$ such that $\stbase_{l} \in \domin(Q_{ii}^{-}, x)$. Then $(k^{*},l)$ is an edge in $\dgraph_{x}$. Therefore $\eta_{k^{*}} > \eta_{l}$. Moreover, $Q_{ii}^{(k^{*})} + x_{k^{*}} = \abs{Q_{ii}^{(l)}} + x_{l}$ and hence $Q_{ii}^{(k^{*})} + x^{(\rho)}_{k^{*}} > \abs{Q_{ii}^{(l)}} + x^{(\rho)}_{l}$. Furthermore, for every $l' \notin \domin(Q_{ii}^{-}, x)$ we have
\[
Q_{ii}^{(k^{*})} + x_{k^{*}} = \abs{Q_{ii}^{(l)}} + x_{l} > \abs{Q_{ii}^{(l')}} + x_{l'} \, .
\]
Therefore $Q_{ii}^{(k^{*})} + x^{(\rho)}_{k^{*}} > \abs{Q_{ii}^{(l')}} + x^{(\rho)}_{l'}$ for $\rho$ small enough. Since $l$, $l'$ were arbitrary, for every sufficiently small $\rho$ we have
\[
Q_{ii}^{+}(x^{(\rho)}) \ge Q_{ii}^{(k^{*})} + x^{(\rho)}_{k^{*}} > Q_{ii}^{-}(x^{(\rho)}) \, .
\]

The second case is analogous. If there is $i < j$ such that $Q_{ii}^{+}(x) \tdot Q_{jj}^{+}(x) = (Q_{ij}(x))^{\tdot 2} \neq \zero$, then we fix $(k_{1}^{*}, k_{2}^{*})$ such that $\stbase_{k^{*}_{1}} \in \domin(Q_{ii}^{+}, x)$, $\stbase_{k^{*}_{2}} \in \domin(Q_{jj}^{+}, x)$. For every $\stbase_{l} \in \domin(Q_{ij}, x)$, $(\{k^{*}_{1}, k^{*}_{2} \}, l)$ is an edge in $\dgraph$. Hence $\eta_{k^{*}_{1}} + \eta_{k^{*}_{2}} > 2\eta_{l}$. Therefore $Q^{(k^{*}_{1})}_{ii} + Q^{(k^{*}_{2})}_{jj} + x^{(\rho)}_{k^{*}_{1}} + x^{(\rho)}_{k^{*}_{2}} > 2\abs{Q^{(l)}_{ij}} + 2x^{(\rho)}_{l}$. As before, this implies that $Q_{ii}^{+}(x^{(\rho)}) \tdot Q_{jj}^{+}(x^{(\rho)}) > (Q_{ij}(x^{(\rho)}))^{\tdot 2}$ for $\rho > 0$ small enough. Since we supposed that $x \in \spectra \cap \R^{n}$, we have $x^{(\rho)} \in \interspectra$ for $\rho$ small enough.
\end{proof}

\begin{figure}[t]
\centering
\includegraphics[height=2.4cm]{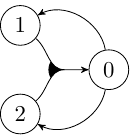}
\caption{The hypergraph from \cref{ex:hypergraph}, consisting of the edges $(0,1)$, $(0,2)$, and $(\{1,2\}, 0)$.}\label[figure]{fig:hypergraph}
\end{figure}

\begin{example}\label[example]{ex:hypergraph}
Take the matrices
\begin{equation*}
\begin{aligned}
Q^{(0)} &\coloneqq \diag \Bigl( 0, 0, \begin{bmatrix} -1 & 0 \\ 0 & -1 \end{bmatrix} \Bigr) \, , \\
Q^{(1)} &\coloneqq \diag \Bigl( \tminus 0,  \zero, 0, \zero \Bigr) \, , \\
Q^{(2)} &\coloneqq \diag \Bigl( \zero,  \tminus 0, \zero, 0 \Bigr) \, .
\end{aligned}
\end{equation*}
In this case, the set $\spectra(Q^{(0)}, Q^{(1)}, Q^{(2)}) \cap \R^3 = \{(\lambda, \lambda, \lambda) \colon \lambda \in \R \}$ is reduced to a line.  In particular, it is not regular. The hypergraph associated with $(0,0,0)$ is depicted in \cref{fig:hypergraph}. This hypergraph admits a circulation (we put $\circulation_{\edge} \coloneqq 1/3$ for all edges).
\end{example}

We now want to show that the condition of \cref{lemma:circulation} is fulfilled generically.
\begin{lemma}\label[lemma]{lemma:metzler_genericity}
There exists a set $X \subset \trop^{d}$ with $d = nm(m+1)/2$ such that every stratum of $X$ is a finite union of hyperplanes and such that if the vector with entries $\abs{Q^{(k)}_{ij}}$ (for $i \le j$) does not belong to $X$, then the hypergraph $\dgraph_{x}$ does not admit a circulation for any $x \in \R^{n}$.
\end{lemma}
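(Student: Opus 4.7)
The idea is to extract, from any hypergraph $\dgraph_x$ that admits a circulation, a \emph{minimal} sub-hypergraph doing the same, and to show that each minimal labeled hypergraph forces a single $x$-independent linear equation on the magnitude vector $\mathbf{a} = (|Q^{(k)}_{ij}|)_{i \leq j,\, k \in [n]}$. The set $X$ is then the union of the finitely many hyperplanes so obtained.

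Concretely, call a \emph{labeled hypergraph} on $[n]$ a hypergraph $\dgraph$ whose edges carry additional data recording which equality of $\dgraph_x$ they could encode: either a diagonal label $(i;k,l)$ attached to an edge $(k,l)$ with $Q^{(k)}_{ii}\in\postrop$, $Q^{(l)}_{ii}\in\negtrop$; or an off-diagonal label $(i,j;k_1,k_2,l)$ with $i<j$ attached to an edge $(\{k_1,k_2\},l)$ with $Q^{(k_1)}_{ii},Q^{(k_2)}_{jj}\in\postrop$. Each such edge records a linear equality, namely $x_k-x_l=|Q^{(l)}_{ii}|-|Q^{(k)}_{ii}|$ in the diagonal case and $x_{k_1}+x_{k_2}-2x_l=2|Q^{(l)}_{ij}|-|Q^{(k_1)}_{ii}|-|Q^{(k_2)}_{jj}|$ in the off-diagonal one. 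The circulation condition at every vertex is precisely the statement that the $\gamma$-weighted sum of the left-hand sides vanishes identically in $x$. Summing the edge equalities with weights $\gamma_e$ therefore yields a linear equation $E_{\dgraph,\gamma}(\mathbf{a})=0$ on the magnitudes alone.

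Define $X$ as the union of the hyperplanes $\{\mathbf{a}:E_{\dgraph^*,\gamma^*}(\mathbf{a})=0\}$ over all minimal labeled hypergraphs $\dgraph^*$ admitting a circulation, equipped with their (unique up to positive scaling) circulation $\gamma^*$. The combinatorial data is finite, and restricting to a fixed stratum of $\trop^d$ (which prescribes which entries of $\mathbf{a}$ are finite) only keeps labeled hypergraphs whose edges involve finite entries, so each stratum of $X$ is a finite union of hyperplanes. To finish, assume $\mathbf{a}\notin X$ yet $\dgraph_x$ admits a circulation $\gamma$ for some $x\in\R^n$. Shrinking the support of $\gamma$ produces a minimal labeled sub-hypergraph $\dgraph^*\subseteq\dgraph_x$ with its own circulation $\gamma^*$; the edges of $\dgraph^*$ are edges of $\dgraph_x$, so their equalities hold at $(x,\mathbf{a})$, and summing them with weights $\gamma^*_e$ yields $E_{\dgraph^*,\gamma^*}(\mathbf{a})=0$, contradicting $\mathbf{a}\notin X$.

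The main obstacle is showing that each such $E_{\dgraph^*,\gamma^*}$ is non-trivial, so that it really cuts out a hyperplane. If $\dgraph^*$ contains an off-diagonal edge $e^*=(\{k_1,k_2\},l)$ labeled $(i,j)$, then the magnitude $|Q^{(l)}_{ij}|$ appears only in that edge's equation (off-diagonal magnitudes never occur in diagonal-edge equations), with positive coefficient $2\gamma^*_{e^*}$, and non-triviality is immediate. If instead all edges of $\dgraph^*$ are diagonal, suppose for contradiction that $E_{\dgraph^*,\gamma^*}$ vanishes identically. Zeroing the coefficient of each $|Q^{(k)}_{ii}|$ reads exactly as the assertion that the restriction of $\gamma^*$ to edges of diagonal label $i$ is itself a circulation on the sub-hypergraph of type-$i$ edges. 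Since $\gamma^*\neq 0$, at least one such restriction is non-zero; but any non-zero circulation supported on type-$i$ diagonal edges must visit some vertex $k$ that is simultaneously the tail of a type-$i$ edge (forcing $Q^{(k)}_{ii}\in\postrop$) and the head of another (forcing $Q^{(k)}_{ii}\in\negtrop$), which is impossible.
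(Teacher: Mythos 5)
Your proposal is correct and follows essentially the same route as the paper: define $X$ stratum by stratum as a finite union of hyperplanes coming from labeled hypergraph-plus-circulation data, sum the edge equalities weighted by the circulation (the circulation condition makes the $x$-terms cancel), and use the sign pattern of the matrices to argue the resulting linear equation on the magnitudes is nontrivial. The paper handles nontriviality in one uniform step (a tail $k$ of a weight-positive edge yields a left-hand coefficient $\abs{Q^{(k)}_{i_e i_e}}$ with $Q^{(k)}_{i_e i_e}\in\postrop$, which cannot reappear among the right-hand terms, all of which have negative sign), whereas you split into the off-diagonal and all-diagonal cases; also note that in your off-diagonal case $\abs{Q^{(l)}_{ij}}$ may occur in the equations of several off-diagonal edges sharing head $l$ and label $(i,j)$, but since all such contributions carry the same sign the nontriviality conclusion is unaffected.
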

\begin{proof}
Fix a nonempty subset $D \subset [d]$, $\card{D} = d'$ and let $\R^{d'}$ be the stratum of $\trop^{d}$ associated with $D$. Suppose that $Q^{(1)}, \dots, Q^{(n)}$ are tropical Metzler matrices, that the support of the vector $\abs{Q^{(k)}_{ij}}$ is equal to $D$, and that $x \in \R^{n}$ is such that $\dgraph_{x}$ admits a circulation. Fix any such circulation $\circulation$. For every edge $\edge = (k,l)$ of $\dgraph$ we can fix $i_{\edge} \in [m]$ such that $Q^{(k)}_{i_{\edge}i_{\edge}} + x_{k} =  \abs{Q^{(l)}_{i_{\edge}i_{\edge}}}  +x_{l} \neq -\infty$. Similarly, for every edge $\edge = (\{k_{1}, k_{2}\},l)$ of $\dgraph$ we can fix $i_{\edge} < j_{\edge}$ such that $Q^{(k_{1})}_{i_{\edge}i_{\edge}} + Q^{(k_{2})}_{j_{\edge}j_{\edge}} + x_{k_{1}} + x_{k_{2}} = 2\abs{Q^{(l)}_{i_{\edge}j_{\edge}}}  + 2x_{l} \neq -\infty$. We take the sum of these equalities weighted by $\circulation$. This gives the equality
\begin{align*}
\sum_{k \in [n]} \sum_{\edge \in \outedge(k)} \circulation_{\edge} Q^{(k)}_{i_{\edge}i_{\edge}} &+ \sum_{k \in [n]} \sum_{\edge \in \outedge(k)} \circulation_{\edge} x_{k} \\ 
= \sum_{l \in [n]} \sum_{\edge \in \inedge_{1}(l)} \circulation_{\edge} \abs{Q^{(l)}_{i_{\edge}i_{\edge}}} &+ \sum_{l \in [n]} \sum_{\edge \in \inedge_{2}(l)} 2\circulation_{\edge} \abs{Q^{(l)}_{i_{\edge}j_{\edge}}} + \sum_{l \in [n]} \sum_{\edge \in \inedge(l)} \card{\tails_{\edge}} \circulation_{\edge} x_{l} \, ,
\end{align*}
where $\inedge_{1}(l)$ denotes the set of incoming edges with tails of cardinality $1$ and $\inedge_{2}(l)$ denotes the set of incoming edges with tails of cardinality $2$. Since $\circulation$ is a circulation, this expression simplifies to
\[
\sum_{k \in [n]} \sum_{\edge \in \outedge(k)} \circulation_{\edge} Q^{(k)}_{i_{\edge}i_{\edge}} 
= \sum_{l \in [n]} \sum_{\edge \in \inedge_{1}(l)} \circulation_{\edge} \abs{Q^{(l)}_{i_{\edge}i_{\edge}}} + \sum_{l \in [n]} \sum_{\edge \in \inedge_{2}(l)} 2\circulation_{\edge} \abs{Q^{(l)}_{i_{\edge}j_{\edge}}} \, .
\]
Consider the set $\hplane$ of all $z \in \R^{d'}$ such that
\begin{equation}
\begin{aligned}
\sum_{k \in [n]} \sum_{\edge \in \outedge(k)} \circulation_{\edge} z^{(k)}_{i_{\edge}i_{\edge}} 
= \sum_{l \in [n]} \sum_{\edge \in \inedge_{1}(l)} \circulation_{\edge} z^{(l)}_{i_{\edge}i_{\edge}} + \sum_{l \in [n]} \sum_{\edge \in \inedge_{2}(l)} 2\circulation_{\edge} z^{(l)}_{i_{\edge}j_{\edge}} 
\, . \label[equation]{eq:hyperplane}
\end{aligned}
\end{equation}
This set is a hyperplane. Indeed, suppose that the equality above is trivial (i.e., that it reduces to $0 = 0$). Take any edge $\edge$ such that $\circulation_{\edge} \neq 0$ and any vertex $k \in \tails_{\edge}$. Then the coefficient $z^{(k)}_{i_{\edge}i_{\edge}}$ appears on the left-hand side. Moreover, we have $\sign(Q^{(k)}_{i_{\edge}i_{\edge}}) = 1$. On the other hand, for every coefficient $z^{(l)}_{j_{\edge}j_{\edge}}$ that appears on the right-hand side we have $\sign(Q^{(l)}_{j_{\edge}j_{\edge}}) = -1$. This gives a contradiction. 

Therefore, we can construct the stratum of $X$ associated with $D$ (denoted $X_{D}$) as follows: we take all possible hypergraphs that can arise in our construction (since $n$ is fixed, we have finitely many of them). Out of them, we choose those hypergraphs that admit a circulation. For every such hypergraph we pick exactly one circulation $\circulation$. After that, for every possible choice of functions $\edge \to i_{\edge}$, $\edge \to (i_{\edge}, j_{\edge})$\footnote{Note that the dependence on $D$ lies here, as the choice of $D$ restricts the amount of possible functions $\edge \to i_{\edge}$, $\edge \to (i_{\edge}, j_{\edge})$.}, we take a set $\hplane$ defined as in \cref{eq:hyperplane}. If $\hplane$ is equal to $\R^{d'}$, then we ignore it. Otherwise, $\hplane$ is a hyperplane. We take $X_{D}$ to be the union of all hyperplanes obtained in this way.
\end{proof}

The proof of \cref{lemma:metzler_genericity} can be easily adapted to give a genericity condition both for Metzler and non-Metzler spectrahedra.

\begin{theorem}\label[theorem]{theorem:genericity}
Let $Q^{(1)}, \dots, Q^{(n)} \in \strop^{m \times m}$ be a sequence of symmetric tropical matrices. There exists a set $X \subset \trop^{d}$ with $d = nm(m+1)/2$ such that every stratum of $X$ is a finite union of hyperplanes and such that if the vector with entries $\abs{Q^{(k)}_{ij}}$ (for $i \le j$) does not belong to $X$, then the matrices $Q^{(1)}, \dots, Q^{(n)}$ fulfill \cref{assumption:nondeg_minors} and for all $(\Sigma, \Diamond)$, every stratum of $\spectra_{\Sigma, \Diamond}(Q^{(1)}, \dots, Q^{(n)})$ is regular.
\end{theorem}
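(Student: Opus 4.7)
The plan is to reduce to \cref{lemma:metzler_genericity}, which already treats the Metzler case, by applying it to each of the finitely many Metzler pieces $\spectra_{\Sigma, \Diamond}$ introduced in \cref{subsection:nonmetzler} and to each of the finitely many sub-families of matrices $(Q^{(k)})_{k \in K}$ corresponding to a nonempty subset $K \subset [n]$.

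First, I would recall that, as noted in the proof of \cref{corollary:genericity}, each set $\spectra_{\Sigma, \Diamond}(Q^{(1)}, \dots, Q^{(n)})$ is itself a tropical Metzler spectrahedron, described by the block-diagonal tropical Metzler matrices combining the blocks $\tilde{Q}^{(k)}$ and $R^{(k)}$; the nonzero entries of these matrices are drawn (with possible repetitions and sign changes) from the diagonal entries $Q^{(k)}_{ii}$ and from the off-diagonal moduli $\abs{Q^{(k)}_{ij}}$ of the original matrices. Furthermore, as in the proof of \cref{corollary:metzler_regularity}, the stratum of $\spectra_{\Sigma, \Diamond}$ associated with a nonempty $K \subset [n]$ coincides with the $\R^{K}$-part of the tropical Metzler spectrahedron described by the sub-family $(Q^{(k)})_{k \in K}$.

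Next, for each of the finitely many triples $(\Sigma, \Diamond, K)$, I would apply \cref{lemma:metzler_genericity} (with $n$ replaced by $\card{K}$) to the corresponding sub-family of block-diagonal Metzler matrices, obtaining a bad set $X_{\Sigma, \Diamond, K}$ in the parameter space of its moduli, every stratum of which is a finite union of hyperplanes. I would then pull $X_{\Sigma, \Diamond, K}$ back to $\trop^{d}$ along the natural coordinate map that sends the full vector $(\abs{Q^{(k)}_{ij}})_{k \in [n],\, i \leq j}$ to the moduli appearing in the block-diagonal representation indexed by $K$. This map is a composition of coordinate projections and identifications of certain coordinates with $-\infty$ (for the indices $k \notin K$ and for the blocks left out by $(\Sigma, \Diamond)$), so the pullback of a hyperplane is, stratum by stratum, either a hyperplane of the corresponding stratum of $\trop^{d}$ or the entire stratum; in the latter case the contribution is discarded. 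Setting $X$ to be the union over all triples $(\Sigma, \Diamond, K)$ of the pulled-back bad sets yields a subset of $\trop^{d}$ whose every stratum is a finite union of hyperplanes.

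Finally, I would verify the conclusion: if $(\abs{Q^{(k)}_{ij}})$ avoids $X$, then for every $(\Sigma, \Diamond, K)$ the restricted sub-family of block-diagonal Metzler matrices avoids $X_{\Sigma, \Diamond, K}$, whence by \cref{lemma:metzler_genericity} the associated hypergraph admits no circulation at any point of $\R^{K}$. \cref{lemma:circulation} then yields \cref{assumption:nondeg_minors} for this sub-family (which entails it for the original matrices, since the assumption involves a single matrix at a time) together with the regularity of the stratum of $\spectra_{\Sigma, \Diamond}$ associated with $K$. The main obstacle, which is essentially bookkeeping, is to track the correspondence between the moduli appearing in each block-diagonal Metzler representation and those of the original matrices, and to check that the hyperplane property is preserved stratum by stratum under the required coordinate identifications; once this is set up, the theorem follows by assembling \cref{lemma:metzler_genericity,lemma:circulation} exactly as described.
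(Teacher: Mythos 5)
Your proposal is correct and rests on the same two ingredients as the paper's proof (circulations in hypergraphs, \cref{lemma:circulation}), but it is organized differently: you invoke \cref{lemma:metzler_genericity} as a black box applied to the block-diagonal Metzler matrices $\begin{bsmallmatrix}\tilde{Q}^{(k)}&-\infty\\-\infty&R^{(k)}\end{bsmallmatrix}$ and pull the resulting bad sets back to $\trop^{d}$, whereas the paper re-runs the circulation/hyperplane argument of \cref{lemma:metzler_genericity} directly on the hypergraph of $\spectra^{(K)}_{\Sigma,\Diamond}$, splitting the $1$-tail edges into the two cases ($i_\edge\in[m]$ diagonal vs.\ $(i_\edge,j_\edge)\in\Sigma^{\complement}$), and explaining the nontriviality of the resulting linear constraint via $\Sigma\cap\Sigma^{\complement}=\emptyset$. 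Both lead to the same family of hyperplanes in each stratum; the paper's version is self-contained, yours is cleaner once one accepts the bookkeeping. That bookkeeping is genuinely the content here, and you identified it: one must check that the coordinate map $\trop^{d}\to\trop^{d_K}\to\trop^{d'}$ from the original moduli to the moduli of the restricted block-diagonal matrices is, on each stratum, a composition of a coordinate projection (forgetting $k\notin K$) with a coordinate renaming (each original modulus $\abs{Q^{(k)}_{ij}}$, $k\in K$, lands in exactly one position of the block pencil: $\tilde Q^{(k)}_{ij}$ if $(i,j)$ is diagonal or in $\Sigma$, a diagonal entry of $R^{(k)}$ if $(i,j)\in\Sigma^{\complement}$, and never more than once). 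Hence the pullback of a hyperplane across strata is always a hyperplane; your hedge ``or the entire stratum'' never actually occurs, since the dual of a projection is injective and that of a renaming is bijective, but the overcaution is harmless. One small imprecision: you write that \cref{assumption:nondeg_minors} for the block-diagonal family entails it for the original matrices ``since the assumption involves a single matrix at a time.'' The cleaner justification is to note that the triple with $K=[n]$ and $\Sigma=\{(i,j):i<j\}$, $\Sigma^{\complement}=\emptyset$ is one of the cases in your union; for that choice the block matrix is exactly the Metzlerization of $Q^{(k)}$ and \cref{assumption:nondeg_minors} for it is verbatim \cref{assumption:nondeg_minors} for $Q^{(k)}$.
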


\begin{proof}
As previously, we fix a nonempty set $D \subset [d]$, $\card{D} = d'$, and we will present a construction of the stratum of $X$ associated with $D$, denoted $X_{D}$. Take symmetric matrices $(Q^{(k)}) \in\trop^{m \times m}$ such that the sequence $(\abs{Q^{(k)}_{ij}}) \in \trop^{d}$ has support equal to $D$. Take any nonempty subset $K \subset [n]$ and let $\spectra^{(K)}$ denote the set $\spectra((Q^{(k)})_{k \in K})$. Fix a pair $(\Sigma, \Diamond)$ and take the tropical Metzler spectrahedron $\spectra^{(K)}_{\Sigma, \Diamond}$. Take any $x \in \R^{K}$ and a graph $\dgraph_{x}$ associated with $\spectra^{(K)}_{\Sigma, \Diamond}$ (note that this graph has vertices enumerated by numbers from $K$). Suppose that this graph admits a circulation $\circulation$. As previously, for every edge $\edge = (\{k_{1}, k_{2}\},l)$ of $\dgraph$ we can take $(i_{\edge}, j_{\edge}) \in \Sigma$ such that $Q^{(k_{1})}_{i_{\edge}i_{\edge}} + Q^{(k_{2})}_{j_{\edge}j_{\edge}} + x_{k_{1}} + x_{k_{2}} = 2\abs{Q^{(l)}_{i_{\edge}j_{\edge}}}  + 2x_{l}$. For every edge $\edge = (k,l)$ we have two possibilities: either there exists $i_{\edge} \in [m]$ such that $Q^{(k)}_{i_{\edge}i_{\edge}} + x_{k} =  \abs{Q^{(l)}_{i_{\edge}i_{\edge}}}  +x_{l}$ or there exists $(i_{\edge}, j_{\edge}) \in \Sigma^{\complement}$ such that $\abs{Q^{(k)}_{i_{\edge}j_{\edge}}} + x_{k} =  \abs{Q^{(l)}_{i_{\edge}j_{\edge}}}  +x_{l}$. As before, we take the sum of these equalities weighted by $\circulation$. This gives the identity
\[
\sum_{k \in [n]} \sum_{\edge \in \outedge(k)} \circulation_{\edge} \abs{Q^{(k)}_{i_{\edge}j_{\edge}}} 
= \sum_{l \in [n]} \sum_{\edge \in \inedge_{1}(l)} \circulation_{\edge} \abs{Q^{(l)}_{i_{\edge}j_{\edge}}} + \sum_{l \in [n]} \sum_{\edge \in \inedge_{2}(l)} 2\circulation_{\edge} \abs{Q^{(l)}_{i_{\edge}j_{\edge}}} \, .
\]
As previously, the set of all $z \in \R^{d'}$ that fulfills this equality is a hyperplane. Indeed, any coefficient $z^{(k)}_{i_{\edge}j_{\edge}}$ which appears on the left-hand side does not appear on the right-hand side (note that here we use the fact that $\Sigma \cap \Sigma^{\complement} = \emptyset$). As before, we take all possible hypergraphs (where ``all possible'' takes into account the fact that $K$ can vary), one circulation for each hypergraph, all possible functions $\edge \to (i_{\edge}, j_{\edge})$ (the number of such functions depends on $D$), and all hyperplanes that can arise in this way. The union of these hyperplanes constitutes $X_{D}$. We deduce the result from \cref{lemma:circulation}.
\end{proof}
\section{Concluding remarks}
We characterized the images by the valuation of nonarchimedean spectrahedra which satisfy a certain genericity condition. Our results imply that the images of nongeneric spectrahedra are still closed semilinear sets. 
It is an open question to characterize the semilinear sets which arise in this way.
A special situation in which such a description is known in the nongeneric case
concerns tropical polyhedra. It relies on the Minkowski--Weyl theorem and does
not carry over to spectrahedra. 

\subsection*{Acknowledgement}
We thank the referees for their comments which led to improvements of this article.

\bibliographystyle{alpha}

\end{document}